\documentclass[11pt,a4paper]{article}

\usepackage[latin1]{inputenc}
\usepackage[ngerman,english]{babel}

\usepackage{amsmath,amssymb}
\usepackage{
						amsthm
           ,hyperref
           ,bbm
           ,enumerate           }

\usepackage[round,authoryear]{natbib}

\usepackage[usenames,dvipsnames]{color}
\usepackage[left=3cm,top=3.5cm,right=2cm,bottom=3cm]{geometry}
\parindent 0mm

\newtheorem{theorem}{Theorem}[section]
\newtheorem{proposition}[theorem]{Proposition}
\newtheorem{corollary}[theorem]{Corollary}
\newtheorem{definition}[theorem]{Definition}
\newtheorem{lemma}[theorem]{Lemma}
\newtheorem{remark}[theorem]{Remark}

\definecolor{darkgreen}{rgb}{0.00,0.40,0.00}

\newcommand{\1}{\mathbbm{1}}

\newcommand{\IE}{\mathbb{E}}
\newcommand{\E}{\mathbb{E}}
\newcommand{\IF}{\mathbb{F}}
\newcommand{\IL}{\mathbb{L}}
\newcommand{\IN}{\mathbb{N}}

\newcommand{\IP}{\mathbb{P}}

\newcommand{\IR}{\mathbb{R}}
\newcommand{\R}{\mathbb{R}}

\newcommand{\cF}{\mathcal{F}}

\newcommand{\cH}{\mathcal{H}}

\newcommand{\cS}{\mathcal{S}}

\newcommand{\ud}{\mathrm{d}}
\newcommand{\uds}{\mathrm{d}s}
\newcommand{\udt}{\mathrm{d}t}

\newcommand{\udws}{\mathrm{d}W_s}

\newcommand{\ti}{{t_i}}
\newcommand{\tj}{{t_j}}
\newcommand{\tip}{{t_{i+1}}}

\newcommand{\ax}{{\alpha_{\scriptscriptstyle{\mathcal{X}}}}}
\newcommand{\ay}{{\alpha_{\scriptscriptstyle{\mathcal{Y}}}}}
\newcommand{\az}{{\alpha_{\scriptscriptstyle{\mathcal{Z}}}}}

\hyphenation{pro-ba-bi-li-ty}

\title{FBSDE with time delayed generators: Lp-solutions, differentiability, representation formulas and path regularity\footnote{Gon\c calo dos Reis kindly acknowledges financial support by the \emph{DFG Research Center MATHEON} to visit the Humboldt-Universit\"at zu Berlin as well of the Chair \emph{Financial risks} of the Risk Foundation sponsored by \emph{Soci\'et\'e G\'en\'erale}. Anthony R\'eveillac is grateful to \emph{DFG Research Center MATHEON, project E2} for financial support. Jianing Zhang acknowledges financial support by the \emph{DFG IRTG 1339 SMCP}.}}
\author{	
	\small Gon\c calo dos Reis \\
        \footnotesize  Technische Universit\"at Berlin\\
        \footnotesize  Institut f\"ur Mathematik \\
        \footnotesize  Str. 17 Juni 136 \\
        \footnotesize  10623 Berlin \\
        \footnotesize  dosreis@math.tu-berlin.de
        	\and
	\small Anthony R\'eveillac \\
        \footnotesize  Institut f\"ur Mathematik  \\
        \footnotesize  Humboldt-Universit\"at zu Berlin \\
        \footnotesize  Unter den Linden 6\\
        \footnotesize  10099 Berlin \\
        \footnotesize  areveill@math.hu-berlin.de
        	\and
	\small Jianing Zhang \\
        \footnotesize  Weierstrass Institute for \\
        \footnotesize  Applied Analysis and Stochastics \\
        \footnotesize  Mohrenstra\ss e 39\\
        \footnotesize  10117 Berlin \\
        \footnotesize  jianing.zhang@wias-berlin.de
\vspace*{0.4cm}
}

\begin{document}

\def\linenumberfont{\normalfont\small\sffamily}
\selectlanguage{english}
\maketitle

\begin{abstract}
\noindent 
We extend the work of \cite{DelongImkeller,DelongImkeller2} concerning Backward stochastic differential equations with time delayed generators (delay BSDE). We give moment and a priori estimates in general $L^p$-spaces and provide sufficient conditions for the solution of a delay BSDE to exist in $L^p$. We introduce decoupled systems of SDE and delay BSDE (delay FBSDE) and give sufficient conditions for their variational differentiability. We connect these variational derivatives to the Malliavin derivatives of delay FBSDE via the usual representation formulas. We conclude with several path regularity results, in particular we extend the classic $L^2$-path regularity to delay FBSDE.
\end{abstract}

\medskip

{\bf 2010 AMS subject classifications:}
Primary: 60H10;   
Secondary: 
60H30, 
60H07, 
60G17;
\\ 
{\bf Key words and phrases:} Backward stochastic differential equation, BSDE, delay, time delayed generators,  Lp-solutions, differentiability, calculus of variations, Malliavin Calculus, path regularity.
\medskip

\section*{Introduction}

The theory of nonlinear \emph{backward stochastic differential equations} (BSDEs) was introduced by \cite{PardouxPeng90} with its main motivations being mathematical finance (see \cite{97KPQ}) and stochastic control theory (see \cite{YongZhou}). In the last twenty years much effort has been given to this type of equations and nowadays many classes of BSDEs and results on them are available. 
Due to tractability, common results are achieved within a Markovian framework. Under certain conditions the BSDE's solution exhibits a Markov structure and hence can be interpreted as an instantaneous transformation of the underlying Markov process that spans the stochastic basis of the underlying probability space. This in turn yields access to the theory of partial differential equations via the non-linear Feynman-Kac formula.

Moving away from the Markovian setting, \cite{DelongImkeller,DelongImkeller2} introduce a new class of BSDE labeled \textit{backward stochastic differential equations with time delayed generators} (delay BSDEs). The dynamics of these BSDEs are governed by
\begin{align*}
Y_t=\xi + \int_t^T f(s,Y(s),Z(s))\uds - \int_t^T Z_s \udws,\quad t\in[0,T],
\end{align*}
where the generator $f$ at time $s\in[0,T]$ is allowed to depend on the past values of the solution $(Y,Z)$ over the time interval $[0,s]$ and $\xi$ is a measurable random variable. In these two works the authors answered thoroughly several fundamental questions: existence and uniqueness of a square integrable solution, comparison principles, existence of a measure solution, BMO martingale properties for the control component $Z$ of the solution, Malliavin differentiability for delay BSDEs driven by a Wiener process and a generalized Poisson martingale. To the best of our knowledge the only existence and uniqueness results for this class of BSDEs follow from those two works. As pointed out by \cite{Delong2010}, delay BSDEs appear naturally in finance and insurance related problems of pricing and hedging of contracts. In the same work the author analyses a vast scope of contracts to which this class of BSDEs can be applied to.\vspace{0.15cm}

Paying consideration to and seeking reference from the state of the art of BSDEs with non-time delayed generators, the next step concerning delay BSDEs is to obtain a feasible numerical scheme. Here, the main obstacle is the presence of the control process $Z$ in the generator. This process is usually obtained via the predictable representation property of the underlying stochastic basis, and initially all one knows about $Z$ is that it is a square integrable process. To steer in the direction of a numerical scheme a deeper analysis on the fine properties of the solution of such equations is required.  As for numerics for Lipschitz continuous BSDEs (see for example \cite{04BT} or \cite{BenderZhang2008}) one is usually forced to gather several results concerning the \emph{path regularity} properties of the solution process before being able to give proper convergence results. Such path properties include not only sample path continuity but also estimations on the time increments of the components of the solution by the size of the time increment. For the purpose of establishing such path properties we first need to prove several auxiliary results.\vspace{0.15cm}

Our agenda consists of refining and extending the existence and uniqueness results obtained in \cite{DelongImkeller,DelongImkeller2} and then steer into the direction of the smoothness properties of the solution of delay BSDEs. We start by improving the original results of \cite{DelongImkeller} concerning their a priori estimates by reformulating them in a more standard fashion.  
In Lemma 2.1 from \cite{DelongImkeller}, the a priori estimates expresses the difference (in norm) of the solution of two delay BSDE as the difference of the respective terminal conditions and generators. These a priori estimates fall short of the usual a priori estimates one expects to see due to the presence of the solutions of \emph{both} delay BSDE on the right hand side of the estimate. We establish a priori estimates in the classical form where the right hand side of the estimate contains the difference of generators evaluated at their zero spatial state and hence is independent of the BSDE solutions. Within the topic of a priori estimates we extend the results of \cite{DelongImkeller} in another direction. We show that given extra integrability of the terminal condition and the generator, the solution will inherit this integrability. This allows us to state moment and a priori estimates in general $L^p$-spaces and not solely in $L^2$. The proof of these estimates relies on techniques from \cite{DelongImkeller} and on computations carried out for non-time delayed BSDEs in the spirit of \cite{WangRanChen}. The usual techniques to obtain higher order moment estimates fail in the setting of delay BSDEs, the reason for this will be seen in \eqref{eq:auxi1} below. A rough explanation would be that for the usual (non-delay) BSDE setting the dynamics of $Y_t$ is given by sums of Lebesgue and It\^o integrals over the interval $[t,T]$ but for delay BSDEs the dynamics of $Y_t$ depends also on a integral over the whole interval $[0,T]$ which doesn't allow the usual techniques to be used. The general  estimates we obtain pave the way to a result of existence and uniqueness of solutions to delay BSDE with Lipschitz continuous generators in general $L^p$ spaces for $p\geq 2$. Inevitably, in analogy to \cite{DelongImkeller,DelongImkeller2} a compatibility condition on the Lipschitz constant and terminal time is required to obtain existence of solutions (see our Theorem \ref{theo:picard}).\vspace{0.15cm}

A customary field of application of BSDEs consists in coupling them with SDEs, giving rise (in our case) to systems of delay forward-backward SDEs (delay FBSDEs). We show that when coupling a delay BSDE with a forward diffusion and assuming appropriate regularity conditions, we obtain smoothness properties of the solution in terms of the involved parameters, in particular with respect to the initial condition of the forward diffusion. Combining this with the Malliavin differentiability proved in \cite{DelongImkeller2} enables us to derive the usual representation formulas for FBSDE which display the relationship between the Malliavin derivatives of the solution process and their variational (classical) derivatives. It is somewhat surprising that such a relationship still holds since it is usually a consequence of the BSDE's Markov property which clearly fails to materialize in the context of delay FBSDE. \vspace{0.15cm}

With this collection of results we are finally able to address the path regularity issue of delay BSDE. Using the techniques employed in \cite{ImkellerDosReis,pathregcorrection2010}, we establish path continuity for the components of the solution of delay FBSDE and we give a result that bounds the norm of the increments in time of $Y$ and $Z$ by the size of the time increment. We expect that these results will open the door to the derivation of concrete numerical schemes and their convergence rate and intend to tackle these problems in our future research.\vspace{0.15cm}

The paper is organized as follows: in Section \ref{section:prelim} we fix notations and elaborate on the type of time-delayed BSDEs that we consider. In Section \ref{section:apriori} we refine and extend the a priori estimates obtained in \cite{DelongImkeller} and then use them to establish existence and uniqueness of solutions in general $L^p$ spaces. In Section \ref{section:diff} we introduce the delay FBSDE framework and use results from the previous sections to obtain the differentiability of the solution process with respect to the initial state of a forward diffusion. The representation formulas and the path regularity results are presented in Section \ref{section:representation}. 

\section{Preliminaries}
\label{section:prelim}
Let $(\Omega,\cF,\IP)$ be a probability space equipped with a standard $d$-dimensional Brownian motion $W$. For a fixed real number $T>0$ we consider the filtration $\IF:= (\cF_t)_{t\geq 0}$ generated by $W$ and augmented by all $\IP$-null sets. The filtered probability space $(\Omega,\cF,\IF,\IP)$ satisfies the usual conditions. Depending on whether we work on $\IR^d$ or $\IR^{m\times d}$, the Euclidean norm respectively the Hilbert-Schmidt operator norm is denoted by $|\cdot|$. Furthermore, $\nabla$ denotes the canonical gradient differential operator and for a function $h(x,y):\IR^m\times \IR^d\to \IR^n$, we write $\nabla_x h$ or $\nabla_y h$ for the derivatives with respect to $x$ and $y$. We work with the following topological vector spaces:

\begin{itemize}
\item For $p\geq 2$, let $L^p(\IR^m)$ be the space of $\cF_T$-measurable random variables $\xi:\Omega\to\IR^m$ normed by $\|\xi \|_{L^p}:=\IE \big[\,|\xi|^p\, \big]^{1/p}$.
\item 
For $\beta \geq 0$ and $p\geq 1$, $\cH^{p}_\beta(\IR^{m\times d})$ denotes the space of all predictable process $\varphi$ with values in $\IR^{m\times d}$ 
such that the norm $\| \varphi \|_{\cH^p_\beta} := \IE \Big[ \Big(\int_0^T  e^{\beta s} |\varphi_s|^2 \uds \Big)^{p/2} \Big]^{1/p}<\infty$.
\item For $\beta\geq 0$ and $p\geq 2$, $\cS^{p}_\beta(\IR^{m\times d})$ denotes the space of all predictable processes $\eta$ with values in $\IR^{m\times d}$ 
such that the norm $\| \eta \|_{\cS^p_\beta} := \IE \Big[ \Big( \sup_{0 \leq t \leq T}  e^{\beta t} |\eta_t|^2 \Big)^{p/2} \Big]^{1/p}<\infty$.
\end{itemize}
We omit referencing the range space if no ambiguity arises. It is fairly easy to see that for any $\beta, \bar{\beta}\geq 0$ the norms on $\cH_\beta^{p}$, $\cH_{\bar\beta}^{p}$ and $\cS_\beta^{p}$, $\cS_{\bar\beta}^{p}$ are equivalent. 
 	
\subsubsection*{Some notation}
We introduce a notational convention which will be used throughout the text: for an arbitrarily given integrable function $f: [0,T] \to \R^m$, trivially extended to $[-T,0)$ via $f(t) \1_{[-T,0)}(t) = 0$, and a given deterministic finite measure $\alpha$ supported on $[-T,0)$ which is not necessarily atomless, we denote for $t\in [0,T]$ and any $p\geq 2$
\begin{align*}
(f \cdot \alpha)(t) := \int_{-T}^0 f(t+v) \alpha(\ud v)
\quad \text{ and }\quad
(f^p \cdot \alpha)(t) := \int_{-T}^0 |f(t+v)|^p \alpha(\ud v).
\end{align*}
Similarly, for a given process $(\varphi_t)_{t\in[0,T]}$, extended to $[-T,0)$ by imposing $\varphi_t=0$ on $[-T,0)$, we denote 
\begin{equation}
\label{eq:notation1}
(\varphi \cdot \alpha)(t) := \int_{-T}^0 \varphi_{t+v} \alpha(\ud v),\qquad t\in[0,T],
\end{equation}
and
\begin{equation}
\label{eq:notation2}
(\varphi^p \cdot \alpha)(t) := \int_{-T}^0 |\varphi_{t+v}|^p \alpha(\ud v), \quad \quad t\in[0,T], \ p\geq 2.
\end{equation} 
We now give a lemma concerning the change of integration order for \eqref{eq:notation1} and \eqref{eq:notation2}, which will become useful in the sequel.

\begin{lemma}\label{lemma:interchange}
Let $\varphi$ be a process and $\alpha$ a non-random finite measure supported on $[-T,0)$. Then we have the following change of integration order: for every $k\geq 1$  
\begin{align*}
\int_t^T (\varphi^k\cdot \alpha)(s) \uds =\int_{0}^T \alpha\big( [r-T,(r-t)\wedge 0) \big) |\varphi_r|^k  \ud r, \quad \forall t\in [0,T], \; \IP-a.s.
\end{align*}
Moreover, if we have for $p \geq 1$ that $\varphi \in \cH^p_0$, then we also have that
\[ 
\| (\varphi \cdot \alpha) \|_{\cH^p_\beta}^p 
\leq
 M_p \| \varphi \|_{\cH^p_0}^p,
\] where $M_p = (e^{\beta T})^{p/2} \big( \alpha([-T,0)) \big)^p$.
\end{lemma}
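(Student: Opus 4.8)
The plan is to prove both assertions by Tonelli's theorem: the first is an exact identity obtained by interchanging the order of integration twice, and the second follows from the first together with an elementary Cauchy--Schwarz bound.

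For the change of integration order I would fix $\om$, so that the statement reduces to a deterministic identity to be verified $\IP$-a.s., and unfold the definition \eqref{eq:notation2} to write
\[
\int_t^T (\varphi^k\cdot\alpha)(s)\uds = \int_t^T \int_{-T}^0 |\varphi_{s+v}|^k\,\alpha(\ud v)\uds .
\]
Since the integrand is non-negative and $(s,v)\mapsto \varphi_{s+v}$ is jointly measurable, Tonelli's theorem allows interchanging the two integrals; after the substitution $r=s+v$ in the inner Lebesgue integral this becomes $\int_{-T}^0\big(\int_{t+v}^{T+v}|\varphi_r|^k\,\ud r\big)\alpha(\ud v)$. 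Interchanging once more (again by Tonelli) so as to integrate in $r$ first, one reads off that for fixed $r$ the admissible $v$ are exactly $[r-T,r-t]\cap[-T,0)$. Because $\varphi$ vanishes on $[-T,0)$, only $r\in[0,T]$ contributes, and for such $r$ this intersection equals $[r-T,(r-t)\wedge 0)$ up to possibly its right endpoint $v=r-t$.

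The only genuinely non-routine point is that $\alpha$ need not be atomless, so a priori including or excluding the endpoint $v=r-t$ changes $\alpha([r-T,(r-t)\wedge 0))$ by the atom $\alpha(\{r-t\})$. I expect this to be the main obstacle, and it is resolved by observing that the discrepancy is harmless after integrating in $r$: the atoms of $\alpha$ form a countable set, hence $\{r\in[0,T]:\alpha(\{r-t\})>0\}$ is countable and thus Lebesgue-null, so that $\int_0^T |\varphi_r|^k\,\alpha(\{r-t\})\,\ud r=0$. This gives the claimed identity with the half-open interval.

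For the norm estimate I would first bound $e^{\beta s}\leq e^{\beta T}$ on $[0,T]$ to pull the weight out of the $s$-integral, and then apply Cauchy--Schwarz against the finite measure $\alpha$ (writing $|\varphi_{s+v}|=1\cdot|\varphi_{s+v}|$) to obtain the pointwise bound $|(\varphi\cdot\alpha)(s)|^2 \leq \alpha([-T,0))\,(\varphi^2\cdot\alpha)(s)$. Invoking the first part with $k=2$ and $t=0$, together with $\alpha([r-T,0))\leq\alpha([-T,0))$, yields
\[
\int_0^T |(\varphi\cdot\alpha)(s)|^2\uds \;\leq\; \big(\alpha([-T,0))\big)^2 \int_0^T |\varphi_r|^2\,\ud r .
\]
Raising to the power $p/2$, taking $\IE$, and collecting the factors $(e^{\beta T})^{p/2}$ and $(\alpha([-T,0)))^{p}$ then produces exactly $M_p\,\|\varphi\|_{\cH^p_0}^p$, completing the proof.
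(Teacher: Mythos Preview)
Your proof is correct and follows essentially the same route as the paper: Tonelli plus the substitution $r=s+v$ for the first identity, and Jensen/Cauchy--Schwarz against $\alpha$ followed by the first identity with $k=2$, $t=0$ for the norm bound. Your discussion of the endpoint $v=r-t$ and possible atoms of $\alpha$ is in fact more careful than the paper, which silently passes from $\int_{r-T}^{(r-t)\wedge 0}\alpha(\ud v)$ to $\alpha([r-T,(r-t)\wedge 0))$; your observation that the discrepancy is supported on a Lebesgue-null set of $r$ is exactly what is needed to make that step rigorous.
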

\begin{proof}
Let $t$ in $[0,T]$ and $k\in[1,+\infty)$. We have that
\begin{align*}
\int_t^T (\varphi^k\cdot\alpha)(s) \uds&= \int_t^T \int_{-T}^0 |\varphi_{s+v}|^k \alpha(\ud v) \uds 
= \int_{-T}^0 \int_t^T |\varphi_{s+v}|^k ~\uds ~\alpha(\ud v)\\
&= \int_{-T}^0 \int_{(t+v) \vee 0}^{T+v} |\varphi_r|^k ~\ud r ~\alpha(\ud v) 
= \int_{0}^T \int_{(r-T)}^{(r-t) \wedge 0} |\varphi_r|^k  ~\alpha(\ud v) ~\ud r \\
& = \int_{0}^T \alpha\big( [r-T,(r-t)\wedge 0) \big) |\varphi_r|^k  \ud r.
\end{align*}
The second claim follows by applying Jensen's inequality and changing the integration order as done above, i.e. for any $\beta\geq 0$ and $p\geq 1$ we have
\begin{align*}
\IE\left[ \Big( \int_0^T e^{\beta s} \vert (\varphi \cdot \alpha)(s) \vert^2 \uds \Big)^{p/2}\right] &\leq \big(e^{\beta T}\alpha([-T,0)) \big)^{p/2} ~ \IE\left[ \Big( \int_0^T (|\varphi|^2 \cdot \alpha)(s) \uds \Big)^{p/2} \right]
\\
& 
\leq M_p \IE\left[ \Big( \int_0^T \vert \varphi_s \vert^2 \uds \Big)^{p/2} \right]
= M_p \|\varphi\|_{\cH_0^p}^p,
\end{align*}
which concludes the proof.
\end{proof}

\medskip

\section{General results on BSDE with time delayed generators}
\label{section:apriori}
In this section we give a brief overview of BSDEs with time delayed generators and discuss the setting they are studied under. We then establish convenient a priori estimates on the difference of two solutions to such equations which will play a central role in proving existence and uniqueness of solutions in the more general $\cH^p$-spaces.

\subsection{BSDEs with time delayed generators}\label{sec:delay_bsde}
Let us start with a recap on BSDE with time delayed generators. Throughout the text, we assume 
\begin{enumerate}
\item[(H0)] $\ay,\az$ are two non-random, finitely valued measures supported on $[-T,0)$
\end{enumerate}
We also define
\begin{align}
\label{eq:alpha1}
\alpha := \ay([-T,0)) \vee \az([-T,0)).
\end{align}
Given $p\geq 2$, we assume that the following holds:
\begin{enumerate}[(H1)]
\item $\xi$ is an $\cF_T$-measurable random variable which belongs to $L^p(\IR^m)$;
\item the generator $f:\Omega\times [0,T]\times \IR^m \times \IR^{m\times d} \to \IR^m$ is measurable, $\IF$-adapted and satisfies the following Lipschitz like condition: there exists a constant $K>0$ such that 
\begin{align*}
\big| f(t,y,z) - f(t,y',z')  \big|^2
&
\leq K \big( |y-y'|^2 + |z-z'|^2 \big)
\end{align*}
holds for $\ud \IP \otimes \udt$-almost all $(\omega,t) \in \Omega \times[0,T]$ and for every $(y,z),(y'z') \in \IR^m \times \IR^{m\times d}$;
\item $\IE \Big[ \big( \int_0^T |f(s,0,0)|^2 \uds \big)^{p/2} \Big] < \infty$;
\item $f(t,\cdot,\cdot)=0$ if $t <0$.
\end{enumerate}
Following the notation from equation \eqref{eq:notation1}, we write
\begin{align*}
(Y\cdot\ay)(t) = \int_{-T}^0 Y_{t+v} \ay(\ud v)\ \text{ and }\
 (Z\cdot\az)(t) = \int_{-T}^0 Z_{t+v} \az(\ud v), \quad 0\leq t \leq T,
\end{align*}
for some processes $(Y_t)_{t\in[0,T]}$ and $(Z_t)_{t\in[0,T]}$ satisfying appropriate integrability conditions. Assumption (H2) and Jensen's inequality then imply
\begin{align*}
\text{(H2')} \quad 
&\big| f\big(t,(Y\cdot\ay)(t),(Z\cdot\az)(t) \big) - f\big(t,(Y'\cdot\ay)(t),(Z'\cdot\az)(t) \big)  \big|^2
\nonumber\\
&\qquad\quad
\leq K \big\{ \big| \big((Y-Y')\cdot\ay\big)(t) \big|^2 + \big| \big((Z-Z')\cdot\az\big)(t) \big|^2 \big\}
\nonumber\\
&\quad\qquad 
\leq L \big\{ \big((Y-Y')^2\cdot\ay\big)(t) + \big((Z-Z')^2\cdot\az\big)(t)\big\},
\end{align*}
where $L:=K\alpha$ with the real number $\alpha$ given by \eqref{eq:alpha1}. The focus of our study are BSDE with time delayed generators which are of the type
\begin{align}
\label{eq:bsde1}
Y_t &= \xi + \int_t^T f\big(s,\Gamma(s)\big) \uds - \int_t^T Z_s \udws, \quad 0 \leq t \leq T,
\end{align}
where $\Gamma$ abbreviates for $t\in[0,T]$
\begin{align}
\label{eq:Gamma1}
\Gamma(t) := \Big( \int_{-T}^0 Y_{t+v} \ay(\ud v), \int_{-T}^0 Z_{t+v} \az(\ud v) \Big) = \Big( (Y\cdot\ay)(t),(Z\cdot\az)(t) \Big).
\end{align}

\begin{definition}[Solution of a Delay BSDE]
We say $(Y,Z)$ is a solution to the delay BSDE \eqref{eq:bsde1} if $(Y,Z)$ belongs to the space $\cS^p_0\times \cH^p_0$ and satisfies \eqref{eq:bsde1}.
\end{definition}

Using a fixed point argument, \cite{DelongImkeller} have shown that a BSDE of the type \eqref{eq:bsde1}-\eqref{eq:Gamma1} admits a unique solution if the parameters of the equation \eqref{eq:bsde1} are sufficiently small, i.e. if the Lipschitz constant $K>0$ or the terminal time $T>0$ satisfy a smallness condition. The following $L^2$-existence and uniqueness result is a straightforward modification of Theorem 2.1 from \cite{DelongImkeller}.
\begin{theorem}
\label{theo:DelongImkeller_thm2.1}
Let $p=2$ and assume that (H0)-(H4) are satisfied. For $\alpha$ defined as in \eqref{eq:alpha1}, assume that the non-negative constants $T$, $L=K\alpha$, $\beta$ are such that
\[
(8 T + \frac1\beta) L \int_{-T}^0 e^{-\beta u}\rho(\ud u)\max\{1,T\}<1,\quad \text{for }\rho\in\{\ay,\az\}.
\]
Then the delay BSDE \eqref{eq:bsde1}-\eqref{eq:Gamma1} has a unique solution $(Y,Z)\in \cS^2_\beta(\IR^m)\times \cH^2_\beta(\IR^{m\times d})$.
\end{theorem}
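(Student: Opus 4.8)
The natural strategy is a Banach fixed-point argument on the product space $\cS^2_\beta(\IR^m)\times\cH^2_\beta(\IR^{m\times d})$ equipped with the norm $\|(Y,Z)\|^2 := \|Y\|_{\cS^2_\beta}^2 + \|Z\|_{\cH^2_\beta}^2$. The plan is to decouple the delay by \emph{freezing} the spatial arguments of the generator: given $(U,V)\in\cS^2_\beta\times\cH^2_\beta$, I would set $g(s) := f\big(s,(U\cdot\ay)(s),(V\cdot\az)(s)\big)$ and solve the \emph{standard} (non-delay) BSDE
\begin{align*}
Y_t = \xi + \int_t^T g(s)\uds - \int_t^T Z_s\udws,\quad t\in[0,T],
\end{align*}
thereby defining the map $\Phi(U,V):=(Y,Z)$. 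Since $(U\cdot\ay)(s)$ and $(V\cdot\az)(s)$ only involve the values $U_{s+v},V_{s+v}$ with $v<0$, the process $g$ is $\IF$-adapted; and (H2)--(H3) together with the second assertion of Lemma \ref{lemma:interchange} show $g\in\cH^2_0$. Hence $Y_t=\IE\big[\xi+\int_t^T g(s)\uds\mid\cF_t\big]$ together with the martingale representation theorem produces a unique $(Y,Z)$, which lies in $\cS^2_\beta\times\cH^2_\beta$ by Doob's inequality and the stated equivalence of the $\beta$-weighted norms. This makes $\Phi$ well defined.

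The core of the argument is to show that $\Phi$ is a contraction, with Lipschitz constant equal to the left-hand side of the stated inequality. Writing $(\delta U,\delta V):=(U^1-U^2,V^1-V^2)$, $(\delta Y,\delta Z):=\Phi(U^1,V^1)-\Phi(U^2,V^2)$ and $\delta f(s):=g^1(s)-g^2(s)$, I would apply It\^o's formula to $e^{\beta s}|\delta Y_s|^2$ between $t$ and $T$, use $\delta Y_T=0$ (the terminal conditions coincide) and the inequality $2\langle a,b\rangle\le\beta|a|^2+\tfrac1\beta|b|^2$ on the drift term. Taking expectations and letting the $\beta$-term absorb the weighted $\IE\int e^{\beta s}|\delta Y_s|^2\uds$ contribution yields
\begin{align*}
\IE\Big[\int_0^T e^{\beta s}|\delta Z_s|^2\uds\Big]\le\frac1\beta\,\IE\Big[\int_0^T e^{\beta s}|\delta f(s)|^2\uds\Big],
\end{align*}
which controls $\|\delta Z\|_{\cH^2_\beta}^2$. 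For the $\cS^2_\beta$-norm of $\delta Y$ I would instead start from $\delta Y_t=\int_t^T\delta f(s)\uds-\int_t^T\delta Z_s\udws$, estimate the Lebesgue integral by Cauchy--Schwarz and the stochastic integral by the Burkholder--Davis--Gundy (or Doob) inequality; the resulting numerical constants combine into the factor $8T$, giving $\|\delta Y\|_{\cS^2_\beta}^2\le 8T\,\IE\int_0^T e^{\beta s}|\delta f(s)|^2\uds$.

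It then remains to bound $\IE\int_0^T e^{\beta s}|\delta f(s)|^2\uds$ by the input norm. Invoking (H2$'$) gives $|\delta f(s)|^2\le L\{((\delta U)^2\cdot\ay)(s)+((\delta V)^2\cdot\az)(s)\}$, and the $\beta$-weighted version of the change-of-order computation in Lemma \ref{lemma:interchange} (carrying the factor $e^{-\beta v}$ out of the inner integral) yields, for any process $\varphi$ and $\rho\in\{\ay,\az\}$,
\begin{align*}
\IE\Big[\int_0^T e^{\beta s}(\varphi^2\cdot\rho)(s)\uds\Big]\le\Big(\int_{-T}^0 e^{-\beta u}\rho(\ud u)\Big)\,\|\varphi\|_{\cH^2_\beta}^2.
\end{align*}
Applying this with $(\varphi,\rho)\in\{(\delta U,\ay),(\delta V,\az)\}$ and using the elementary embedding $\|\delta U\|_{\cH^2_\beta}^2\le T\,\|\delta U\|_{\cS^2_\beta}^2$ for the $Y$-delay term (the $Z$-delay term already sits in $\cH^2_\beta$) produces the factor $\max\{1,T\}$. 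Collecting the three estimates gives
\begin{align*}
\|(\delta Y,\delta Z)\|^2\le\big(8T+\tfrac1\beta\big)\,L\,\max\{1,T\}\,\max_{\rho\in\{\ay,\az\}}\int_{-T}^0 e^{-\beta u}\rho(\ud u)\;\|(\delta U,\delta V)\|^2,
\end{align*}
so the smallness hypothesis makes $\Phi$ a strict contraction and the Banach fixed-point theorem delivers the unique solution in $\cS^2_\beta\times\cH^2_\beta$. I expect the main obstacle to be the careful bookkeeping of constants: specifically, the weighted change-of-integration-order that transfers the delay integrals into the $\beta$-weighted norms with exactly the weight $\int_{-T}^0 e^{-\beta u}\rho(\ud u)$, and the clean separation of the $\cS^2$ (giving $8T$) and $\cH^2$ (giving $\tfrac1\beta$) contributions so that they assemble precisely into the stated compatibility condition.
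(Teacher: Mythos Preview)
Your proposal is correct and follows exactly the fixed-point approach that the paper attributes to \cite{DelongImkeller}; note that the paper itself does not give a proof of this theorem but simply cites \cite{DelongImkeller} and remarks that the fixed-point argument is insensitive to dimension. Your sketch therefore supplies considerably more detail than the paper does, and the key ingredients you identify---the It\^o computation with the Young split $2\langle a,b\rangle\le\beta|a|^2+\tfrac1\beta|b|^2$ to get the $\tfrac1\beta$ contribution, the $\cS^2$ estimate producing the $8T$, and the $\beta$-weighted change-of-integration-order that yields the factor $\int_{-T}^0 e^{-\beta u}\rho(\ud u)$ (this is precisely the computation \eqref{eq:tmp_02} that the paper records in the proof of Proposition~\ref{prop:apriori_p2})---are exactly those used in the reference.
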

\begin{remark}
In \cite{DelongImkeller}, this result is proved for the one-dimensional case $d=m=1$. It is clear that by the nature of the fixed point argument, the proof is insensitive to the dimension of the equation.
\end{remark}

\begin{remark} Given that a compatibility condition is necessary in order to establish existence and uniqueness of solutions and moreover that we will be giving an extended version of it, all the proofs in this section are given with extra detail in order to better control the constants involved in each result. 
\end{remark}

\subsection{Moment and a priori estimates}

In Lemma 2.1 from \cite{DelongImkeller} the authors provide a priori estimates for the time delayed BSDE \eqref{eq:bsde1} which estimates the norms of the difference between the solution of two BSDE in terms of the terminal condition and the difference of the generators applied to the solution processes. More specifically, for $i\in\{1,2\}$ let $(Y^i,Z^i)$ be the solution of a BSDE with dynamics \eqref{eq:bsde1} with terminal condition $\xi^i$ and driver $f^i$ satisfying (H1)-(H4), then it holds that
\begin{align}
\label{eq:DI}
\nonumber
&\|Y^1-Y^2 \|^2_{\cH^2_\beta} + \| Z^1-Z^2 \|^2_{\cH^2_\beta}
\leq C_2 \Big\{ \IE \big[\, e^{\beta T} |Y_T^1-Y_T^2|^2\,  \big] 
\\
&\qquad \quad +  \IE \big[ \int_0^T e^{\beta s} |f^1(s,(Y^1 \cdot \alpha)(s),(Z^1 \cdot \alpha)(s))-f^2(s,(Y^2 \cdot \alpha)(s),(Z^2 \cdot \alpha)(s))|^2 \uds\,  \big] \Big\},
\end{align} 
where the authors assume that $\alpha$ is some deterministic measure on $[-T,0)$ with mass one. Thus Lemma 2.1 from \cite{DelongImkeller} establishes the a priori estimate \eqref{eq:DI} whose right hand side depends again on the solution of \emph{both} delay BSDE. In the context of \cite{DelongImkeller} such a result suffices to establish existence and uniqueness of solutions in $\cS^2_\beta \times \cH^2_\beta$ but the situation becomes more intricate when the same issues are considered on $\cS^p_\beta \times \cH^p_\beta$ for $p>2$. More precisely, we are not able to obtain an estimate similar to \eqref{eq:DI} when $p>2$. In addition, the study of differentiability of the solution (for both $p=2$ and $p>2$), made in Section \ref{section:diff}, requires a priori estimates where the right hand side of the estimate depends only on the problem's data: the differences between the terminal conditions and a quantity of the form $\delta_2f_s:=f^1(s,(Y^2 \cdot \alpha)(s),(Z^2 \cdot \alpha)(s))-f^2(s,(Y^2 \cdot \alpha)(s),(Z^2 \cdot \alpha)(s))$. For a clear view of the required estimates, compare for instance \eqref{eq:DI} with \eqref{eq:apriori_p2}. 

\smallskip\subsubsection*{Moment estimates - part I}

As a starting observation, we have that if \eqref{eq:bsde1} admits a solution $(Y,Z)$ in $\cH^p_\beta(\IR^m) \times \cH^p_\beta(\IR^{m\times d})$, then we also have that $Y \in \cS^p_\beta(\IR^m)$.
\begin{lemma} 
Let $\beta\geq0$, $p\geq 2$ and assume that (H0)-(H4) hold. If the delay BSDE \eqref{eq:bsde1} admits a solution $(Y,Z) \in \cH^p_\beta(\IR^m) \times \cH^p_\beta(\IR^{m\times d})$ then we have also that $Y \in \cS^p_{\beta}(\IR^m)$.
\end{lemma}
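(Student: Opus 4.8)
The plan is to represent $Y$ through a conditional expectation and then reduce the $\cS^p_\beta$-bound to a maximal inequality, using Lemma~\ref{lemma:interchange} to absorb the delay terms. As a preliminary observation, since $p\geq 2$ and the horizon is finite, $Z\in\cH^p_\beta$ forces $Z\in\cH^2_0$: writing $A:=\int_0^T|Z_s|^2\uds$ we have $A\leq 1+A^{p/2}$, so $\IE[A]\leq 1+\IE[A^{p/2}]<\infty$ (the $\beta$-norms being equivalent). Hence $\int_0^\cdot Z_s\udws$ is a genuine (square-integrable) martingale and $\IE[\int_t^T Z_s\udws\,|\,\cF_t]=0$. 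Taking $\cF_t$-conditional expectations in \eqref{eq:bsde1} then yields, $\IP$-a.s.,
\[
Y_t=\IE\Big[\,\xi+\int_t^T f\big(s,\Gamma(s)\big)\uds\,\Big|\,\cF_t\,\Big],\qquad 0\leq t\leq T,
\]
so that $|Y_t|\leq M_t$, where $M_t:=\IE\big[\,|\xi|+\int_0^T|f(s,\Gamma(s))|\uds\,\big|\,\cF_t\,\big]$ is a nonnegative martingale.

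Because $\beta\geq 0$ and $t\leq T$ we have $e^{\beta t}|Y_t|^2\leq e^{\beta T}M_t^2$, and therefore
\[
\IE\Big[\Big(\sup_{0\leq t\leq T}e^{\beta t}|Y_t|^2\Big)^{p/2}\Big]
\leq e^{\beta Tp/2}\,\IE\Big[\Big(\sup_{0\leq t\leq T}M_t\Big)^p\Big]
\leq e^{\beta Tp/2}\Big(\tfrac{p}{p-1}\Big)^p\IE\big[M_T^p\big],
\]
the last step being Doob's $L^p$ maximal inequality. It thus remains to prove $\IE[M_T^p]<\infty$, i.e. $\IE\big[(|\xi|+\int_0^T|f(s,\Gamma(s))|\uds)^p\big]<\infty$.

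To this end I would split, using (H2), $|f(s,\Gamma(s))|\leq|f(s,0,0)|+\sqrt{K}\big(|(Y\cdot\ay)(s)|+|(Z\cdot\az)(s)|\big)$. The $|\xi|$ term is finite by (H1); the $f(s,0,0)$ term is controlled by (H3) after the Cauchy--Schwarz step $\int_0^T|f(s,0,0)|\uds\leq T^{1/2}(\int_0^T|f(s,0,0)|^2\uds)^{1/2}$. The genuinely delay-specific step, and the only place the argument departs from the classical non-delay setting, is the treatment of $\int_0^T|(Y\cdot\ay)(s)|\uds$: one has $|(Y\cdot\ay)(s)|\leq\int_{-T}^0|Y_{s+v}|\ay(\ud v)$, and Lemma~\ref{lemma:interchange} (with $k=1$) rewrites
\[
\int_0^T\!\int_{-T}^0|Y_{s+v}|\,\ay(\ud v)\uds=\int_0^T\ay\big([r-T,0)\big)\,|Y_r|\,\ud r\leq\alpha\int_0^T|Y_r|\,\ud r\leq\alpha\,T^{1/2}\Big(\int_0^T|Y_r|^2\ud r\Big)^{1/2},
\]
so that raising to the power $p$ and taking expectations gives a bound by $\|Y\|_{\cH^p_0}^p<\infty$; the $Z$-term is handled identically against $\|Z\|_{\cH^p_0}^p$. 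Collecting these estimates shows $\IE[M_T^p]<\infty$ and hence $Y\in\cS^p_\beta(\IR^m)$.

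I expect the only conceptual point to be the interchange of integration order for the delay term, which is exactly what Lemma~\ref{lemma:interchange} supplies; everything else (the martingale justification, Doob, and the Cauchy--Schwarz bookkeeping of constants) is routine.
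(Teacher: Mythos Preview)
Your proof is correct. The approach differs from the paper's in one step: the paper bounds $\sup_t|Y_t|$ directly from the BSDE dynamics, writing
\[
\sup_{0\leq t\leq T}|Y_t|\leq |\xi|+\int_0^T|f(s,\Gamma(s))|\,\uds+\sup_{0\leq t\leq T}\Big|\int_t^T Z_s\,\udws\Big|,
\]
and then controls the stochastic-integral term in $L^p$ via the Burkholder--Davis--Gundy inequality. You instead take conditional expectations to obtain the representation $Y_t=\IE[\xi+\int_t^T f(s,\Gamma(s))\uds\,|\,\cF_t]$, which removes the stochastic integral altogether and reduces the supremum bound to Doob's maximal inequality for the martingale $M$. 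Both routes then converge on the same task --- showing $\int_0^T|f(s,\Gamma(s))|\,\uds\in L^p$ --- and both handle it in the same way, splitting via (H2) and invoking Lemma~\ref{lemma:interchange} to absorb the delay terms into $\|Y\|_{\cH^p_0}$ and $\|Z\|_{\cH^p_0}$. Your route is marginally cleaner in that it bypasses BDG, at the small cost of first checking that $Z\in\cH^2_0$ so the conditional-expectation step is legitimate; note that strictly speaking the $L^1$-integrability of $\int_0^T|f(s,\Gamma(s))|\,\uds$ needed for the representation should be established \emph{before} writing $Y_t=\IE[\cdots|\cF_t]$, but since your bounds on this quantity use only the standing hypothesis $(Y,Z)\in\cH^p_\beta\times\cH^p_\beta$ and not the representation itself, there is no circularity.
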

\begin{proof}
Throughout let $t\in[0,T]$ and $p\geq 2$. Since all $\beta$-norms are equivalent, it suffices to show the result for $\beta=0$. We drop the $\beta$-subscripts in the following. The pair $(Y,Z)$ satisfies
\begin{align*}
Y_t &= \xi + \int_t^T f\big(s,(Y\cdot\ay)(s),(Z\cdot\az)(s)\big) \uds  - \int_t^T Z_s \udws,
\end{align*}
and in turn we have 
\begin{align*}
\sup_{0\leq t \leq T}|Y_t| & \leq |\xi| + \int_0^T \big| f\big(s,(Y\cdot\ay)(s),(Z\cdot\az)(s)\big) \big| \uds + \sup_{0\leq t \leq T} \big|\int_t^T Z_s \udws \big|.
\end{align*}
Combining the fact of $Z\in\cH^p$ with the inequalities by Young, Doob and Burkholder-Davis-Gundy (BDG), we obtain 
\begin{align*} 
\IE \Big[ \Big(  \sup_{0\leq t \leq T} \big| \int_t^T Z_s \udws \big|^2 \Big)^{p/2} \Big]
& 
\leq 2^{p/2} ~ \IE \Big[ \Big( \big| \int_0^T Z_s \udws \big|^2 + \sup_{0\leq t \leq T} \big| \int_0^t Z_s \udws \big|^2  \Big)^{p/2} \Big]
\\
& 
\leq 2^p ~ \IE \Big[ \sup_{0\leq t \leq T} \big| \int_0^t Z_s \udws \big|^p \Big]
\leq 2^p C_p  \|Z\|_{\cH^p_0}^p 
< \infty.
\end{align*}
Next observe that by the Lipschitz property of the generator $f$ (notice that (H2) implies (H2')), it follows that
\begin{align*}
&
\Big(  \int_0^T \big| f\big(s,(Y\cdot\ay)(s),(Z\cdot\az)(s) \big) \big|^2 \uds  \Big)^{p/2}\\
&\quad \leq 2^{p/2} \Big( \int_0^T \big| f(s,0,0) \big|^2 \uds  
+ \int_0^T \Big| f\big(s,(Y\cdot\ay)(s),(Z\cdot\az)(s) \big) - f(s,0,0) \Big|^2 \uds \Big)^{p/2}
\\
&\quad
\leq 2^{p/2}2^{p/2-1} \bigg\{  \Big( \int_0^T \big| f(s,0,0) \big|^2 \uds  \Big)^{p/2}+ \Big(  L \int_0^T \Big( (|Y|^2\cdot\ay)(s) + (|Z|^2\cdot\az)(s) \big) \uds \Big)^{p/2} \bigg\}.
\end{align*}
The second term in the bracket can be further estimated by 
\begin{align*}
&\left(  L \int_0^T \Big( (|Y|^2\cdot\ay)(s) + (|Z|^2\cdot\az)(s)  \Big) \uds \right)^{p/2}
\\
&\qquad
\leq 2^{p/2-1}L^{p/2} \left\{ \Big( \int_0^T (|Y|^2\cdot\ay)(s) \uds \Big)^{p/2} + \Big( \int_0^T (|Z|^2\cdot\az)(s) \uds\Big)^{p/2} \right\}
\\
&\qquad 
\leq 2^{p/2-1} L^{p/2}\alpha^{p/2} \left\{ \Big( \int_0^T |Y_s|^2 \uds \Big)^{p/2} + \Big( \int_0^T |Z_s|^2 \uds\Big)^{p/2} \right\},
\end{align*}
where the last line follows from 
Lemma \ref{lemma:interchange}. This estimate together with 
(H3) yields 
\begin{align*}
\IE \Big[ \Big( \int_0^T \big| f\big(s,(Y\cdot\ay)(s),(Z\cdot\az)(s) \big) \big|^2 \uds  \Big)^{p/2} \Big] &< \infty. 
\end{align*}
Using hypothesis (H1), i.e. that $\xi$ is in $\in L^p$, we can conclude that $Y\in\cS^p$ must hold. 
\end{proof}

\subsubsection*{A priori estimates}

Let us define the weighted variant $\tilde\alpha$ of $\alpha$ as the maximum of the weighted measures $\ay$ and $\az$ on $[-T,0)$ by
\begin{align}
\label{eq:alpha_tilde}
\tilde\alpha := \int_{-T}^0 e^{-\beta s} \ay(\uds) \vee \int_{-T}^0 e^{-\beta s} \az(\uds), \quad \beta\geq 0.
\end{align}
\begin{remark}
We emphasize that $\tilde{\alpha}$ depends on $\beta$. To keep the notation to a minimum we simply write $\tilde{\alpha}$ instead of making the dependence explicit.
\end{remark}

The next results establishes \textit{canonical} a priori estimates (in the sense that the right hand side of the estimate only depends on the problem's data) for the solutions of two time-delayed BSDEs as given by \eqref{eq:bsde1}. We distinguish between the cases $p=2$ and $p>2$, and we start with the case $p=2$.
\begin{proposition}[A priori estimates for $p=2$]
\label{prop:apriori_p2}
Let $p=2$. Consider $i\in\{1,2\}$ and let $(Y^i,Z^i) \in\cS^2_0 \times \cH^2_0$ be the solution of the delay BSDE \eqref{eq:bsde1} with terminal condition $\xi^i$ and generator $f^i$ satisfying (H0)-(H4). Denote by $K>0$ the Lipschitz constant of $f^1$ as given in (H2') and set $\delta Y = Y^1 - Y^2$, $\delta Z = Z^1 - Z^2$. 
If either $T$ or $K$ or $\alpha$ are small enough then there exist two constants $\beta, \gamma>0$ satisfying
\begin{align}
\label{eq:consts}
D_1:=\beta - \gamma - \frac{\tilde{\alpha} L}{\gamma} > 0
\quad\text{and}\quad 
D_2:= 1- \frac{\tilde{\alpha} L }\gamma>0
\qquad (\text{with }L=K\alpha \text{ and }\alpha\text{ as in \eqref{eq:alpha1}}),
\end{align} 
and a constant $C_2 =C_2(\beta,\gamma,\tilde\alpha,L,T)>0$ depending on $\beta,\gamma,\tilde\alpha,L,T$ such that: 
for $i\in\{1,2\}$, $(Y^i,Z^i) \in\cS^2_\beta \times \cH^2_\beta$ and
\begin{align}
\label{eq:apriori_p2}
\|\delta Y \|^2_{\cS^2_\beta} + \|\delta Y \|^2_{\cH^2_\beta} + \| \delta Z \|^2_{\cH^2_\beta} &\leq C_2 \Big\{ \IE \Big[ e^{\beta T} |\delta Y_T|^2  \Big] +  \IE \Big[ \int_0^T e^{\beta s} |\delta_2 f_s|^2 \uds  \Big] \Big\},
\end{align}
where $\delta_2 f_t := f^1\big(t,(Y^2 \cdot \ay)(t),(Z^2 \cdot \ay)(t)\big) - f^2\big(t,(Y^2 \cdot \ay)(t),(Z^2 \cdot \ay)(t)\big)$ for $t\in[0,T]$.
\end{proposition}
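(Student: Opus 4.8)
The plan is to run the classical a priori argument for BSDEs---It\^o's formula applied to $e^{\beta s}|\delta Y_s|^2$ followed by Young's inequality and careful bookkeeping of the constants---and to absorb the genuinely delayed part of the Lipschitz estimate by means of Lemma \ref{lemma:interchange}. First I would write down the equation solved by the difference: subtracting the two copies of \eqref{eq:bsde1},
\[
\delta Y_t = \delta Y_T + \int_t^T \big[f^1(s,\Gamma^1(s)) - f^2(s,\Gamma^2(s))\big]\uds - \int_t^T \delta Z_s \udws,
\]
where $\Gamma^i(s)=\big((Y^i\cdot\ay)(s),(Z^i\cdot\az)(s)\big)$. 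The generator difference splits as $f^1(s,\Gamma^1(s))-f^1(s,\Gamma^2(s))$, controlled by (H2'), plus the source term $\delta_2 f_s$. Since all $\beta$-norms are equivalent, the membership $(Y^i,Z^i)\in\cS^2_0\times\cH^2_0$ immediately upgrades to $\cS^2_\beta\times\cH^2_\beta$.

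Next I would apply It\^o's formula to $e^{\beta s}|\delta Y_s|^2$ on $[t,T]$ and take expectations; the stochastic integral is a genuine martingale and drops out, leaving an energy identity whose right-hand side contains $\IE[e^{\beta T}|\delta Y_T|^2]$ and the cross term $2\,\IE\big[\int_t^T e^{\beta s}\,\delta Y_s\cdot(f^1(s,\Gamma^1(s))-f^2(s,\Gamma^2(s)))\uds\big]$. Setting $t=0$, bounding the cross term by Young's inequality with a parameter $\gamma$, and applying (H2') produces a term $\gamma\,\|\delta Y\|_{\cH^2_\beta}^2$, the source contribution $\tfrac1\gamma\,\IE[\int_0^T e^{\beta s}|\delta_2 f_s|^2\uds]$, and the weighted delay integrals $\tfrac{L}{\gamma}\int_0^T e^{\beta s}\big[((\delta Y)^2\cdot\ay)(s)+((\delta Z)^2\cdot\az)(s)\big]\uds$.

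Disposing of these delay integrals is the main obstacle and the step that departs from the non-delayed case: the Lipschitz bound is not pointwise in $s$ but an average over past values, so $|\delta Y|^2$ and $|\delta Z|^2$ cannot be absorbed instantaneously. Instead I would invoke the weighted form of Lemma \ref{lemma:interchange} to rewrite, for instance, $\int_0^T e^{\beta s}((\delta Y)^2\cdot\ay)(s)\uds$ as an ordinary time integral and bound it by $\tilde\alpha\int_0^T e^{\beta r}|\delta Y_r|^2\ud r$, with $\tilde\alpha$ as in \eqref{eq:alpha_tilde} (and analogously for $\delta Z$). This is exactly where the constant $\tilde\alpha$ enters. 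Collecting the coefficients of $\|\delta Y\|_{\cH^2_\beta}^2$ and $\|\delta Z\|_{\cH^2_\beta}^2$ and rearranging yields $D_1\|\delta Y\|_{\cH^2_\beta}^2 + D_2\|\delta Z\|_{\cH^2_\beta}^2 \leq \IE[e^{\beta T}|\delta Y_T|^2]+\tfrac1\gamma\,\IE[\int_0^T e^{\beta s}|\delta_2 f_s|^2\uds]$, with $D_1,D_2$ as in \eqref{eq:consts}, valid once $D_1,D_2>0$. For the $\cS^2_\beta$-bound on $\delta Y$ I would return to the It\^o identity, take the supremum over $t$ before expectations, estimate $\sup_t|\int_t^T e^{\beta s}\delta Y_s\cdot\delta Z_s\udws|$ by Burkholder--Davis--Gundy, and split the resulting bound by Young so that a small multiple of $\IE[\sup_s e^{\beta s}|\delta Y_s|^2]$ is absorbed on the left while the $\delta Z$-term is already controlled; this delivers $\|\delta Y\|_{\cS^2_\beta}^2$ against the same data and, summed with the previous inequality, gives \eqref{eq:apriori_p2} with an explicit $C_2=C_2(\beta,\gamma,\tilde\alpha,L,T)$.

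It remains to secure $\beta,\gamma$ with $D_1,D_2>0$. Here $D_2>0$ forces $\gamma>\tilde\alpha L$ and $D_1>0$ forces $\beta>\gamma+\tilde\alpha L/\gamma$; since $L=K\alpha$, smallness of $K$, $\alpha$ or $T$ keeps $\tilde\alpha L$ small enough to pick $\gamma$ just above $\tilde\alpha L$ and then $\beta$ sufficiently large. The only delicacy is that $\tilde\alpha$ itself increases with $\beta$, so $\beta$ and $\gamma$ must be chosen jointly, in the same spirit as the compatibility condition of Theorem \ref{theo:DelongImkeller_thm2.1}; I expect this constant bookkeeping, together with the correct use of the interchange lemma under the $e^{\beta s}$ weight, to be the only real subtlety, the rest following the classical template.
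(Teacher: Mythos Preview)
Your plan is correct and matches the paper's argument in its core: It\^o's formula on $e^{\beta s}|\delta Y_s|^2$, the split $f^1(\Gamma^1)-f^2(\Gamma^2)=[f^1(\Gamma^1)-f^1(\Gamma^2)]+\delta_2 f_s$, Young's inequality with parameter~$\gamma$, and absorption of the delayed Lipschitz terms via the weighted interchange bound producing the factor~$\tilde\alpha$. The resulting inequality $D_1\|\delta Y\|_{\cH^2_\beta}^2+D_2\|\delta Z\|_{\cH^2_\beta}^2\leq\text{data}$ is exactly what the paper obtains.

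The only genuine difference is in the $\cS^2_\beta$-estimate. You propose returning to the pathwise It\^o identity, taking $\sup_t$ before expectations, and controlling the stochastic integral by Burkholder--Davis--Gundy followed by Young so that a small multiple of $\|\delta Y\|_{\cS^2_\beta}^2$ is absorbed on the left and $\|\delta Z\|_{\cH^2_\beta}^2$ is already known. The paper instead goes back to the BSDE for $\delta Y$ itself (not the squared process), multiplies by $e^{\frac\beta2 t}$, takes conditional expectation with respect to~$\cF_t$ (killing the stochastic integral), bounds the resulting martingale of conditional expectations by Doob's $L^2$-inequality, and closes the loop by plugging the $\cH^2_\beta$-estimate into the $\cS^2_\beta$-bound with a fresh small parameter~$\gamma'$. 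Both routes are standard and yield the same conclusion; your BDG route is arguably cleaner here, while the paper's Doob route has the advantage that it generalises more directly to the $p>2$ case treated in the subsequent Proposition~\ref{lemma:apriori}.
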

\begin{proof}
Let $\gamma, K, T, \alpha$ be such that the relations in \eqref{eq:consts} are satisfied (i.e. $D_1>0$ and $D_2>0$). Throughout let $t\in[0,T]$, $i\in\{1,2\}$ and define $\Gamma^i$  as in \eqref{eq:Gamma1} for the pair $(Y^i,Z^i)$. An application of It\^o's formula to the semimartingale $e^{\beta t}|\delta Y_t|^2$ for $\beta>0$ yields
\begin{align*}
&e^{\beta t}|\delta Y_t|^2 + \int_t^T \beta e^{\beta s}|\delta Y_s|^2 \uds + \int_t^T e^{\beta s}|\delta Z_s|^2 \uds\\
&\qquad = e^{\beta T}|\delta Y_T|^2 + \int_t^T 2e^{\beta s} \big\langle \delta Y_s, f^1(s,\Gamma^1(s)) - f^2(s,\Gamma^2(s)) \big\rangle \uds - \int_t^T 2e^{\beta s} \langle \delta Y_s,\delta Z_s \udws \rangle \\
&\qquad \leq e^{\beta T}|\delta Y_T|^2 + \int_t^T \gamma e^{\beta s} |\delta Y_s|^2 \uds + \int_t^T \frac{e^{\beta s}}{\gamma} \Big( \big| f^1(s,\Gamma^1(s)) - f^1(s,\Gamma^2(s)) \big|^2 \Big) \uds\\
& \qquad\qquad  + 2 \int_t^T e^{\beta s} \big\langle \delta Y_s, \delta_2 f_s \big\rangle \uds -  \int_t^T 2e^{\beta s} \langle \delta Y_s,\delta Z_s \udws \rangle,
\end{align*} 
where the last inequality results from Young's inequality for $\gamma$. Reorganizing and taking condition (H2') for the generator $f^1$ into account, we get
\begin{align*}
\nonumber
&e^{\beta t}|\delta Y_t|^2 + \int_t^T (\beta-\gamma) e^{\beta s}|\delta Y_s|^2 \uds + \int_t^T e^{\beta s}|\delta Z_s|^2 \uds\\
\nonumber
&\qquad \leq e^{\beta T}|\delta Y_T|^2 +  \int_t^T \frac{e^{\beta s}}{\gamma} L \Big[ (|\delta Y|^2\cdot\ay)(s) + (|\delta Z|^2\cdot \az)(s) \Big] \uds \nonumber\\
\nonumber
& \qquad\qquad  + 2 \int_t^T e^{\beta s} \big\langle \delta Y_s, \delta_2 f_s \big\rangle \uds -  \int_t^T 2e^{\beta s} \langle \delta Y_s,\delta Z_s \rangle \udws. 
\end{align*}

By a change of integration order argument similar to that in the proof of Lemma \ref{lemma:interchange} we obtain for 
$j\in\{ {\scriptstyle{\mathcal{Y}}}, {\scriptstyle{\mathcal{Z}}}\}$ and $\phi^{\scriptscriptstyle{\mathcal{Y}}}=\delta Y$, $\phi^{\scriptscriptstyle{\mathcal{Z}}}=\delta Z$ 
\begin{align}
\label{eq:tmp_02}
&\int_t^T e^{\beta s} (|\phi^j|^2\cdot \alpha_j)(s) \uds
\nonumber \\
&\quad= \int_t^T \int_{-T}^0 e^{\beta (s+v)} e^{-\beta v} \1_{\{s+v \geq 0\}} |\phi^j_{s+v}|^2 \alpha_j(\ud v) \uds 
\nonumber \\
&\quad = \int_{-T}^0 \int_{(t+v)\vee 0}^{T+v} e^{\beta r} e^{-\beta v} \1_{\{r \geq 0\}} |\phi^j_r|^2 \ud r ~ \alpha_j(\ud v) 
= \int_0^T \int_{r-T}^{(r-t)\wedge 0} e^{\beta r} e^{-\beta v} |\phi^j_r|^2 \alpha_j(\ud v) ~ \ud r 
\nonumber \\
& \quad \leq \int_0^T e^{\beta r} |\phi^j_r|^2\big( \int_{-T}^0 e^{-\beta v} \alpha_j(\ud v) \big)\ud r
\leq \int_0^T \tilde{\alpha} e^{\beta r} |\phi^j_r|^2 \ud r,
\end{align}
with $\tilde{\alpha}$ given by \eqref{eq:alpha_tilde}. Continuing the inequality from above we get
\begin{align}
&e^{\beta t}|\delta Y_t|^2 + \int_t^T (\beta-\gamma) e^{\beta s}|\delta Y_s|^2 \uds + \int_t^T e^{\beta s}|\delta Z_s|^2 \uds \leq e^{\beta T}|\delta Y_T|^2 + 2 \int_t^T e^{\beta s} \big\langle \delta Y_s, \delta_2 f_s \big\rangle \uds \nonumber\\
&\qquad\qquad +  \int_0^T \frac{\tilde{\alpha} L}{\gamma} e^{\beta s}  \Big( |\delta Y_s|^2 + |\delta Z_s|^2 \Big) \uds -  \int_t^T 2e^{\beta s} \langle \delta Y_s,\delta Z_s \udws \rangle.\label{eq:auxi1}
\end{align}
Taking the expectations for $t=0$ yields
\begin{align*}
&\big(\beta - \gamma - \frac{\tilde\alpha L}{\gamma} \big) \IE \Big[ \int_0^T  e^{\beta s}|\delta Y_s|^2 \uds \Big] + \big(1-\frac{\tilde\alpha L}{\gamma} \big) \IE \Big[ \int_0^T  e^{\beta s}|\delta Z_s|^2 \uds  \Big]
\\
&\quad \leq \IE \Big[ e^{\beta T}|\delta Y_T|^2 \Big] + 2 \IE\Big[ \int_0^T e^{\beta s} \big\langle \delta Y_s, \delta_2 f_s \big\rangle \uds  \Big]
\\
&\quad \leq \IE \Big[ e^{\beta T}|\delta Y_T|^2 \Big] + 2 \IE\Big[ \sup_{0\leq t \leq T} e^{\frac{\beta}{2} t} \vert \delta Y_t\vert \int_0^T e^{\frac{\beta}{2} s} |\delta_2 f_s| \uds  \Big]
\\
&\quad \leq \IE \Big[ e^{\beta T}|\delta Y_T|^2 \Big] + \gamma' \IE\Big[ \sup_{0\leq t \leq T} e^{\beta t} \vert \delta Y_t\vert^2\Big] + \frac{1}{\gamma'} \IE\Big[\big(\int_0^T e^{\frac{\beta}{2} s} |\delta_2 f_s| \uds\big)^2  \Big]
\end{align*}
where we have used Young's inequality with some $\gamma'>0$ to be specified later. From the last expression and since $D_1,D_2>0$ (see \eqref{eq:consts}) we deduce that
\begin{align}\label{eq:tmp_01}
\| \delta Y \|^2_{\cH^2_\beta} + \| \delta Z \|^2_{\cH^2_\beta} &\leq C \Big\{ \IE \Big[ e^{\beta T} |\delta Y_T|^2  \Big] + \gamma' 
\|\delta Y\|_{\cS^2_\beta}^2
+ \frac{1}{\gamma'} \IE\Big[\big(\int_0^T e^{\frac{\beta}{2} s} |\delta_2 f_s| \uds\big)^2 \Big\},
\end{align}
where $C >0$ is a constant depending $\beta,\gamma,\tilde\alpha,L$ and $T$. In order to obtain the $\cS^2_\beta$-estimate for $\delta Y$ we observe that we have
\begin{align*} 
\delta Y_t &\leq \delta Y_T + \int_t^T \big|f^1\big(s,\Gamma^1(s)\big) - f^1\big(s,\Gamma^2(s)\big)  \big| \uds + \int_t^T \big| \delta_2 f_s \big| \uds - \int_t^T \delta Z_s \udws.
\end{align*}
Multiplying by the monotone increasing function $e^{\frac{\beta}{2} t}$ and taking the conditional expectation with respect to $\cF_t$ we get
\begin{align*}
e^{\frac\beta2 t} \delta Y_t &\leq  \IE \left[ e^{\frac\beta2 t}|\delta Y_T| + e^{\frac\beta2 t} \int_t^T \big|f^1\big(s,\Gamma^1(s)\big) - f^1\big(s,\Gamma^2(s)\big)  \big| \uds +  e^{\frac\beta2 t} \int_t^T  \big|\delta_2 f_s \big| \uds   \big| \cF_t \right]
\\
&
\leq \IE \left[ e^{\frac\beta2 T}|\delta Y_T| +  \int_t^T e^{\frac\beta2 s} \big|f^1\big(s,\Gamma^1(s)\big) - f^1\big(s,\Gamma^2(s)\big)  \big| \uds \right.
\\
&
\qquad \left. + \int_0^t e^{\frac\beta2 s} \big|f^1\big(s,\Gamma^1(s)\big) - f^1\big(s,\Gamma^2(s)\big)  \big| \uds +   \int_t^T e^{\frac\beta2 s}  \big|\delta_2 f_s \big| \uds + \int_0^t e^{\frac\beta2 s}  \big|\delta_2 f_s \big| \uds  \big| \cF_t \right]
\\
&= \IE \left[ e^{\frac\beta2 T}|\delta Y_T| +  \int_0^T e^{\frac\beta2 s} \big|f^1\big(s,\Gamma^1(s)\big) - f^1\big(s,\Gamma^2(s)\big)  \big| \uds  +  \int_0^T e^{\frac\beta2 s}  \big|\delta_2 f_s \big| \uds  \big| \cF_t \right].
\end{align*}
Using Doob's inequality, we obtain
\begin{align*}
&
\|\delta Y\|_{\cS^2_\beta}^2 
\\
&\leq 4 ~ \IE \Big[ \Big(\IE \Big[ e^{\frac\beta2 T}|\delta Y_T| +  \int_0^T e^{\frac\beta2 s} \big|f^1\big(s,\Gamma^1(s)\big) - f^1\big(s,\Gamma^2(s)\big)  \big| \uds 
+  \int_0^T e^{\frac\beta2 s}  \big|\delta_2 f_s \big| \uds   ~ \big| ~ \cF_T\Big]\Big)^2 \Big]
\\
&
\leq 12 ~ \IE \Big[ e^{\beta T}|\delta Y_T|^2 +  T \int_0^T e^{\beta s} \big|f^1\big(s,\Gamma^1(s)\big) - f^1\big(s,\Gamma^2(s)\big)  \big|^2 \uds 
  + \big(\int_0^T e^{\frac{\beta}{2} s} \big|\delta_2 f_s \big| \uds\big)^2 \Big],
\end{align*}
where the last line follows by Jensen's inequality. Since $f^1$ satisfies (H2'), an application of Lemma \ref{lemma:interchange} yields
\begin{align*}
\|\delta Y\|_{\cS^2_\beta}^2
&
\leq 12\Big\{ \IE \Big[ e^{\beta T}|\delta Y_T|^2 \Big] + \tilde\alpha T L \Big( \| \delta Y \|^2_{\cH^2_\beta} + \| \delta Z \|^2_{\cH^2_\beta} \Big)+ \E\Big[ \big(\int_0^T e^{\frac{\beta}{2} s}  \big|\delta_2 f_s \big| \uds\big)^2\Big] \Big\}.
\end{align*}
Hence, plugging into \eqref{eq:tmp_01} we find 
\begin{align*}
&\big(1-12 C \gamma' \tilde{\alpha} T L\big)
\IE \Big[ \sup_{0\leq t \leq T} e^{\beta t} |\delta Y_t|^2 \Big]
\\
&\qquad \leq 12\Big\{ \big(1+C \tilde{\alpha} T L\big) \IE \Big[ e^{\beta T}|\delta Y_T|^2 \Big]
+  \big(1+C \gamma'^{-1} \tilde{\alpha} T L\big) \IE\Big[\big(\int_0^T e^{\frac{\beta}{2} s}  \big|\delta_2 f_s \big| \uds\big)^2 \Big] \Big\}.
\end{align*}
Choosing $\gamma'$ small enough such that $(1-12 C \gamma' \tilde{\alpha} T L)>0$ is satisfied we conclude that estimate \eqref{eq:apriori_p2} holds for a constant $C_2 = C_2(\beta,\gamma,\tilde\alpha,L,T)$.
\end{proof}
\begin{remark}
Note that in the previous result we have three degrees of freedom: the Lipschitz constant of the driver $K$, the time horizon $T$ and the duration of the time delay given by $\alpha$.  
\end{remark}

The proof for the case $p>2$ is more involved and uses techniques from the proof of Proposition \ref{prop:apriori_p2}. The main reason for the proof to be more involved can be seen in \eqref{eq:auxi1}. Usually the dynamics of $Y_t$ is described by integrals over the interval $[t,T]$ but for delay BSDEs we see from \eqref{eq:auxi1} that the dynamics of $Y_t$ depends also on a integral over the whole interval $[0,T]$. We also remark that the techniques of \cite{DelongImkeller} cannot be extended in $L^p$ (for $p>2$), see for instance estimate (2.3) present in the proof of Lemma 2.1 in \cite{DelongImkeller}.

\smallskip

The next proposition gives a result that will be central in establishing existence and uniqueness of $L^p$-solutions to delay BSDEs as well as in proving the differentiability results of  Section \ref{section:diff}.

\begin{proposition}[A priori estimates for $p>2$]
\label{lemma:apriori}
Let $p> 2$. Consider $i\in\{1,2\}$ and denote by $(Y^i,Z^i)\in\cS^p_0 \times \cH^p_0$ a solution of the delay BSDE \eqref{eq:bsde1} with terminal condition $\xi^i$ and generator $f^i$ satisfying (H0)-(H4). Denote by $K>0$ the Lipschitz constant of $f^1$ in (H2') and set $\delta Y = Y^1 - Y^2$, $\delta Z = Z^1 - Z^2$. If either $T$ or $K$ or $\alpha$ are small enough (for $L=K\alpha$, $\alpha$ as in \eqref{eq:alpha1} and $\tilde\alpha$ as in \eqref{eq:alpha_tilde}) then  there exists $\beta, \gamma>0$ satisfying \eqref{eq:consts} (i.e. $D_1,D_2>0$) and
\begin{align}
\label{eq:D_4}
D_3 &:= 1 - 2^{4p - 4} d^2_{p/2} \big(\frac{p}{p-2}\big)^{p/2}  \big(\frac{\tilde{\alpha} L}{\gamma - \tilde{\alpha} L}\big)^{p/2} D_2^{-p/2}- \big(\frac{\tilde{\alpha} L}{\gamma}T \big)^{p/2} \big(\frac{p}{p-2}\big)^{p/2}2^{p-2}>0
\end{align}
where $m\in \IN$ denotes the dimension of the $\delta Y$ process and the constant $d_{p/2}$ is given by
\begin{align}
\label{BDGconstant}
d_{p/2} := m^{p/2+1}\big(\frac{p}{p-1}\big)^{p^2/2} \Big( \frac{p(p-1)}{2} \Big)^{p/2}.
\end{align}
In addition, $(Y^i,Z^i) \in\cS^p_\beta \times \cH^p_\beta$ ($i\in\{1,2\}$) and there exists a constant $C_p = C_p(\beta,\gamma,\tilde\alpha,L,T,m) > 0$ explicitly given in \eqref{eq:dummy_whatever} such that
\begin{align}
\label{eq:apriori_p}
\|\delta Y \|^p_{\cS^p_\beta} + \|\delta Y \|^p_{\cH^p_\beta} + \| \delta Z \|^p_{\cH^p_\beta}
&\leq C_p \Big\{ \IE \Big[ \big(e^{\beta T} |\delta Y_T|^2 \big)^{p/2} \Big] +  \IE \Big[ \big(\int_0^T e^{\frac{\beta}{2} s}|\delta_2 f_s| \uds \big)^p \Big] \Big\},
\end{align}
with
$\delta_2 f_t = f^1\big(t,Y^2(t),Z^2(t)\big) - f^2\big(t,Y^2(t),Z^2(t)\big)$, for $t\in[0,T]$. 
\end{proposition}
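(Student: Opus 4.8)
The plan is to follow the architecture of the proof of Proposition~\ref{prop:apriori_p2}, using the pathwise inequality \eqref{eq:auxi1} --- valid for every $t\in[0,T]$, since it was obtained from It\^o's formula, Young's inequality and (H2') alone --- as the common starting point, but now raising every term to the power $p/2$ before passing to expectations. The genuinely new difficulty relative to $p=2$ is that, once raised to the power $p/2$, the stochastic integral in \eqref{eq:auxi1} no longer vanishes in expectation, so it must be controlled by the Burkholder--Davis--Gundy (BDG) inequality, whence the constant $d_{p/2}$ of \eqref{BDGconstant}; and that the $\cS^p_\beta$- and $\cH^p_\beta$-bounds become coupled, so that the final estimate is obtained by an absorption argument whose feasibility is exactly the content of the condition $D_3>0$.

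First I would produce an $\cH^p_\beta$-estimate for $\delta Z$. Taking $t=0$ in \eqref{eq:auxi1} and retaining only $\int_0^T e^{\beta s}|\delta Z_s|^2\uds$ on the left, the same quantity reappears on the right with coefficient $\tilde\alpha L/\gamma$, so $D_2=1-\tilde\alpha L/\gamma>0$ lets me isolate $D_2\int_0^T e^{\beta s}|\delta Z_s|^2\uds$. Raising to the power $p/2$, taking expectations and splitting with $(\sum a_i)^{p/2}\le (\mathrm{const})\sum a_i^{p/2}$, the only delicate term is the martingale: its bracket is dominated by $\big(\sup_s e^{\beta s}|\delta Y_s|^2\big)\int_0^T e^{\beta s}|\delta Z_s|^2\uds$, so that BDG followed by a weighted Young inequality splits off a small multiple of $\|\delta Z\|^p_{\cH^p_\beta}$ --- absorbed back into the left, which is where the factor $D_2^{-p/2}$ is generated --- and a multiple of $\|\delta Y\|^p_{\cS^p_\beta}$. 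Together with the $\langle\delta Y,\delta_2 f\rangle$ and $\frac{\tilde\alpha L}{\gamma}\int_0^T e^{\beta s}|\delta Y_s|^2\uds$ terms, this bounds $\|\delta Z\|^p_{\cH^p_\beta}$ by the terminal data, the $\delta_2 f$-term, and $\|\delta Y\|^p_{\cS^p_\beta}$ (the $\cH^p_\beta$-norm of $\delta Y$ being dominated by its $\cS^p_\beta$-norm via $\int_0^T\le T\sup$).

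The heart of the matter is the $\cS^p_\beta$-estimate for $\delta Y$. Taking the supremum over $t\in[0,T]$ in \eqref{eq:auxi1}, raising to the power $p/2$ and taking expectations, the running maximum of the stochastic integral is handled by the $L^{p/2}$ maximal and BDG inequalities --- this is the origin of the factor $(p/(p-2))^{p/2}=\big((p/2)/(p/2-1)\big)^{p/2}$ in \eqref{eq:D_4}, while the explicitly delayed term $\frac{\tilde\alpha L}{\gamma}\int_0^T e^{\beta s}|\delta Y_s|^2\uds\le \frac{\tilde\alpha L}{\gamma}T\sup_s e^{\beta s}|\delta Y_s|^2$ produces the contribution $\big(\frac{\tilde\alpha L}{\gamma}T\big)^{p/2}$. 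The crucial feature is that the right-hand side now carries both $\|\delta Z\|^p_{\cH^p_\beta}$ (through the bracket of the martingale, contributing one factor $d_{p/2}$) and a self-referential multiple of $\|\delta Y\|^p_{\cS^p_\beta}$.

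Finally I would close the system by inserting the $\cH^p_\beta$-bound for $\delta Z$ from the first step into the $\cS^p_\beta$-bound for $\delta Y$. Collecting every coefficient multiplying $\|\delta Y\|^p_{\cS^p_\beta}$ on the right yields precisely $1-D_3$: the first subtracted term of \eqref{eq:D_4} is the contribution routed through $\delta Z$ --- carrying $d_{p/2}^2$ (one BDG in each estimate) and $D_2^{-p}$ from the two nested absorptions, since $\frac{\tilde\alpha L}{\gamma-\tilde\alpha L}D_2^{-1}=\frac{\tilde\alpha L}{\gamma}D_2^{-2}$ --- and the second is the direct delayed-$\delta Y$ term. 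The hypothesis $D_3>0$ is exactly what makes the absorption legitimate, so that $\|\delta Y\|^p_{\cS^p_\beta}$ may be moved to the left and bounded by the data alone; the $\cH^p_\beta$-bounds for $\delta Y$ and $\delta Z$ then follow, with $C_p$ the resulting explicit combination. To start the argument one first notes that, since $\tilde\alpha L\to 0$ as $L=K\alpha\to 0$ and the two subtracted terms of \eqref{eq:D_4} vanish as $\tilde\alpha L\to 0$ or $T\to 0$, smallness of $T$, $K$ or $\alpha$ permits a choice of $\beta$ large and $\gamma\in(\tilde\alpha L,\beta)$ rendering $D_1,D_2,D_3>0$ simultaneously. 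The main obstacle is precisely this circular coupling: the $\delta Z$-bound needs $\|\delta Y\|_{\cS^p_\beta}$ while the $\delta Y$-bound needs $\|\delta Z\|_{\cH^p_\beta}$, and only the smallness encoded in $D_3>0$ lets the fixed-point-like absorption succeed; keeping the BDG, maximal, dimensional ($d_{p/2}$) and dyadic constants explicit enough to read off \eqref{eq:D_4} is the accompanying bookkeeping burden.
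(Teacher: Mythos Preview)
Your overall architecture matches the paper's proof: start from the pathwise inequality \eqref{eq:auxi1}, derive an $\cH^p_\beta$-bound for $\delta Z$ (Step~1) and an $\cS^p_\beta$-bound for $\delta Y$ (Step~2), each in terms of the other, then close by substitution and absorption under $D_3>0$ (Step~3). One technical point differs. In your $\cS^p_\beta$-step you take the supremum over $t$ directly in \eqref{eq:auxi1} and then control $\sup_t\big|\int_t^T 2e^{\beta s}\langle\delta Y_s,\delta Z_s\,\ud W_s\rangle\big|$ by BDG. The paper instead first takes the conditional expectation $\IE[\,\cdot\,|\cF_t]$ of \eqref{eq:auxi1}, which \emph{kills the stochastic integral outright}, and only then takes $\sup_t(\cdot)^{p/2}$ and applies Doob's $L^{p/2}$-maximal inequality to the resulting conditional-expectation martingale; this is the genuine source of the factor $(p/(p-2))^{p/2}$ in \eqref{eq:D_4}, and no second BDG is used in this step. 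Consequently your accounting for $d_{p/2}^2$ (``one BDG in each estimate'') is not how the paper arrives at it: both factors of $d_{p/2}$ come from Step~1 alone, because the Young constant there is chosen as $\gamma_2=2^{3p/2-1}d_{p/2}D_2^{-p/2}$, so that $d_{p/2}\gamma_2\propto d_{p/2}^2$ lands in front of $\|\delta Y\|^p_{\cS^p_\beta}$. Your route would also work, but with different explicit constants than those in \eqref{eq:D_4}; the paper's conditional-expectation trick is what makes the stated formula for $D_3$ come out exactly.
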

\begin{remark}
\label{remark1onhowsmallKTalphaare}
A closer analysis on the constants $D_1$, $D_2$ and $D_3$ shows: 
\[
\lim_{K \alpha \to 0} (D_1,D_2,D_3)> (0,0,0).
\]
This means that with either a small $T$ or a small $K$ or a small $\alpha$ the conditions of the previous result can be verified.
\end{remark}
\textit{Proof of Proposition \ref{lemma:apriori}.}
Throughout let $t\in[0,T]$, $i\in\{1,2\}$ and from \eqref{eq:consts} define $D_1 := \beta - \gamma - \frac{\tilde\alpha L}{\gamma}$  and $D_2 := 1 - \frac{\tilde\alpha L}{\gamma}$. We emphasize that $\tilde{\alpha}$ as defined in \eqref{eq:alpha_tilde} depends on $\beta$. Recall \eqref{eq:auxi1} from the proof of Proposition \ref{prop:apriori_p2}: 
\begin{align}\label{eq:dummy1}
&e^{\beta t}|\delta Y_t|^2 + \int_t^T (\beta-\gamma) e^{\beta s}|\delta Y_s|^2 \uds + \int_t^T e^{\beta s}|\delta Z_s|^2 \uds \leq e^{\beta T}|\delta Y_T|^2 + 2 \int_t^T e^{\beta s} \big\langle \delta Y_s, \delta_2 f_s \big\rangle \uds \nonumber\\
&\qquad\qquad +  \int_0^T \frac{\tilde{\alpha} L}{\gamma} e^{\beta s}  \Big( |\delta Y_s|^2 + |\delta Z_s|^2 \Big) \uds -  \int_t^T 2e^{\beta s} \langle \delta Y_s,\delta Z_s \udws \rangle.
\end{align} 
By assumption $\beta,\gamma,T,K,\alpha$ are such that \eqref{eq:consts} holds and hence we have that $D_1>0$ and $D_2>0$. We carry out the proof in several steps.

\smallskip

\textsf{Step 1:} We claim that 
\begin{align}
&\IE \left[ \Big( \int_0^T e^{\beta s}|\delta Z_s|^2 \uds \Big)^{p/2}  \right]
 \leq D_2^{-p/2} \Big\{ 2^{p/2} \IE \Big[  \big( e^{\beta T}|\delta Y_T|^2 \big)^{p/2} \Big] 
+ 2^{3p -2} d^2_{p/2} D_2^{-p/2}\|\delta Y\|_{\cS^p_\beta}^p 
\nonumber\\
& \hspace{5cm} 
 + 2^{3p/2 - 1} \IE \Big[\,\big|\int_0^T e^{\beta s} \big\langle \delta Y_s, \delta_2 f_s \big\rangle \uds \big|^{p/2}\Big]
 \Big\}\label{eq:auxi4a},
\end{align}
where $d_{p/2} > 0$ is a given constant appearing in the BDG inequality which only depends on $p > 2$ and the dimension. Estimate \eqref{eq:auxi4a} can be deduced as follows: putting $t=0$ in \eqref{eq:dummy1} and noticing that by \eqref{eq:consts} the constants $D_1$ and $D_2$ are positive we get
\begin{align*}
\big(1-\frac{\tilde\alpha L}{\gamma} \big) \int_0^T e^{\beta s}|\delta Z_s|^2 \uds &\leq \big(\beta-\gamma-\frac{\tilde\alpha L}{\gamma} \big) \int_0^T e^{\beta s}|\delta Y_s|^2 \uds +  \big(1-\frac{\tilde\alpha L}{\gamma} \big) \int_0^T e^{\beta s}|\delta Z_s|^2 \uds\\
&\leq e^{\beta T}|\delta Y_T|^2 + 2 \int_0^T e^{\beta s} \big\langle \delta Y_s, \delta_2 f_s \big\rangle \uds - 2 \int_0^T e^{\beta s} \langle \delta Y_s,\delta Z_s \udws  \rangle.
\end{align*}
Now raising both sides to the power $p/2>1$, making use of the fact that for $a,b,c \in \IR$ 
\begin{align*}
\big|a+2b-2c\big|^{p/2} 
&
\leq 2^{p/2-1} \Big( |a|^{p/2} + |2b-2c|^{p/2} \Big)
\leq 2^{p/2-1} \Big( |a|^{p/2} + 2^{p/2-1} \big( |2b|^{p/2} + |2c|^{p/2} \big) \Big) 
\\
&
= 2^{p/2 - 1} |a|^{p/2} + 2^{3p/2-2}|b|^{p/2} + 2^{3p/2-2}|c|^{p/2}  
\end{align*}
and taking expectations, we get
\begin{align}
&\big(1-\frac{\tilde\alpha L}{\gamma} \big)^{p/2} ~ \IE \Big[ \Big( \int_0^T e^{\beta s}|\delta Z_s|^2 \uds \Big)^{p/2}  \Big] 
\leq 2^{p/2-1} \IE \Big[ \big( e^{\beta T}|\delta Y_T|^2 \big)^{p/2} \Big]
\nonumber\\
& \hspace{1cm}
 + 2^{3p/2 - 2} \IE \Big[\, \Big|\int_0^T e^{\beta s} \big\langle \delta Y_s, \delta_2 f_s \big\rangle \uds \Big|^{p/2}\Big] 
 + 2^{3p/2 - 2} \IE \Big[\, \Big|\int_0^T e^{\beta s} \langle \delta Y_s,\delta Z_s \udws \rangle \Big|^{p/2} \Big] \label{eq:auxi2}.
\end{align}
Denoting $$ \ud N^j_t := \sum_{k=1}^d \delta Z^{k,j}_t \ud W^k_t,$$
we apply the BDG inequality with the constant
\[
C^* := \big(\frac{p}{p-1}\big)^{p^2/2} \Big( \frac{p(p-1)}{2} \Big)^{p/2} >0,
\]
(see Theorem 3.9.1 from \cite{Khosh} and solution to Problem 3.29, p. 231, in \cite {KaratzasShreve}) and Young's inequality with some constant $\gamma_2>0$ and obtain 
\begin{align}
&\IE \Big[ \Big|\int_0^T e^{\beta s} \langle \delta Y_s,\delta Z_s \udws \rangle \Big|^{p/2}  \Big] 
\leq \IE \Big[ \Big( \sum_{j=1}^m \big|\int_0^T e^{\beta s} \delta Y^j_s ~ \ud N^j_s \Big)^{p/2}  \Big] 
\nonumber \\
& \qquad
\leq m^{p/2} ~ \sum_{j=1}^m \IE \Big[ \big| \int_0^T e^{\beta s} \delta Y^j_s ~ \ud N^j_s \big|^{p/2}   \Big]
\leq C^* m^{p/2} ~ \sum_{j=1}^m \IE \Big[ \int_0^T e^{2\beta s} |\delta Y^j_s|^2 ~ \ud  \langle N^j \rangle_s \big|^{p/4}   \Big]
\nonumber\\
&
\qquad\leq C^* m^{p/2} ~ \sum_{j=1}^m \IE \Big[ \big( \sup_{0\leq t \leq T} e^{\beta t} |\delta Y^j_t|^2 \big)^{p/4} ~ \big(\int_0^T e^{\beta s} ~ \ud  \langle N^j \rangle_s \big)^{p/4}   \Big] 
\nonumber\\
&\qquad
\leq C^* m^{p/2} ~ \sum_{j=1}^m \Big( \gamma_2 \IE \Big[ \big( \sup_{0\leq t \leq T} e^{\beta t} |\delta Y^j_t|^2 \big)^{p/2} \Big] + \frac1\gamma_2 \IE \Big[ \big(\int_0^T e^{\beta s} ~ \ud  \langle N^j \rangle_s \big)^{p/2} \Big] \Big)
\nonumber\\
&\qquad
\leq C^* m^{p/2} ~ \Big( \gamma_2 \| \delta Y \|^p_{\cS^p_\beta} + \frac{m}{\gamma_2}  \IE \Big[ \Big( \sum_{j=1}^m\int_0^T e^{\beta s} ~ \ud  \langle N^j \rangle_s \Big)^{p/2} \Big] \Big) 
\nonumber\\
&\qquad
\leq C^* m^{p/2+1} ~ \left( \gamma_2 \| \delta Y \|^p_{\cS^p_\beta} + \frac1\gamma_2 \| \delta Z \|^p_{\cH^p_\beta} \right) 
\leq d_{p/2} ~ \Big\{ \gamma_2 \| \delta Y \|^p_{\cS^p_\beta} + \frac{1}{\gamma_2}\|\delta Z\|_{\cH^p_\beta}^p
\Big\},
\label{eq:auxi3}
\end{align}
where by \eqref{BDGconstant} we have that $C^*m^{p/2+1}=d_{p/2}$.
With the particular choice of 
\[
\gamma_2 := 2^{3p/2-1} d_{p/2} D_2^{-p/2} = 2^{3p/2-1} d_{p/2}\left( \frac{\gamma}{\gamma-\tilde{\alpha}L} \right)^{p/2} > 0,
\] plugging \eqref{eq:auxi3} into \eqref{eq:auxi2} yields 
\begin{align*}
&\Big( \big(1 - \frac{\tilde{\alpha}L}{\gamma} \big)^{p/2} - \frac{2^{3p/2 -2 }}{\gamma_2} d_{p/2} \Big) ~
\| \delta Z \|_{\cH^p_\beta}^p = \frac12 D_2^{p/2} ~ \| \delta Z \|_{\cH^p_\beta}^p \leq\\
&\quad \leq 2^{p/2 - 1} \IE \Big[ \big( e^{\beta T}|\delta Y_T|^2 \big)^{p/2} \Big] + 2^{3p/2-2} \IE \Big[\Big|\int_0^T e^{\beta s} \big\langle \delta Y_s, \delta_2 f_s \big\rangle \uds \Big|^{p/2}\Big] 
+ 2^{3p/2 - 2} d_{p/2} \gamma_2 
\|\delta Y\|_{\cS^p_\beta}^p,
\end{align*}
which implies the claim.

\smallskip

\textsf{Step 2:} We claim that 
\begin{align}
\label{eq:dummy2}
D_3 \|\delta Y\|_{\cS^p_\beta}^p
&\quad 
\leq \big(\frac{p}{p-2} \big)^{p/2} ~  \Big\{ \big( 2^{p-2} + 2^{3p/2-2} \big( \frac{\tilde\alpha L}{\gamma - \tilde\alpha L} \big)^{p/2}  \big) \IE \Big[ \big(e^{\beta T}|\delta Y_T|^2\big)^{p/2} \Big]
\nonumber\\
&\qquad
+ \big( 2^{3p/2-2} + 2^{5p/2 - 3} \big( \frac{\tilde\alpha L}{\gamma - \tilde\alpha L} \big)^{p/2} \big) ~ \IE \Big[\Big(\int_0^T e^{\beta s} \big| \big\langle \delta Y_s, \delta_2 f_s \big\rangle \big| \uds \Big)^{p/2}\Big] \Big\},
\end{align}
holds for
\begin{align}
\label{constant D4}
D_3 &:= 1 - 2^{4p - 4} d^2_{p/2} \big(\frac{p}{p-2}\big)^{p/2}  \big(\frac{\tilde\alpha L}{\gamma - \tilde\alpha L}\big)^{p/2} D_2^{-p/2}- \big(\frac{\tilde\alpha L}{\gamma}T \big)^{p/2} \big(\frac{p}{p-2}\big)^{p/2}2^{p-2}. 
\end{align}
Note that the choice of $K, T$ and $\alpha$ has been such that $D_3 > 0$ is satisfied. To prove \eqref{eq:dummy2}, we go back to \eqref{eq:dummy1}, where we take the conditional expectation with respect to $\cF_t$, then the supremum over $t\in[0,T]$, raise to the power $p/2$ and finally apply Doob's inequality to obtain 
\begin{align}
&\IE \Big[ \sup_{0\leq t \leq T} \big(e^{\beta t}|\delta Y_t|^2 \big)^{p/2} \Big]\nonumber \\
&\quad
\leq \IE \Big[ \sup_{0\leq t \leq T} \Big(\IE \Big[ e^{\beta T} |\delta Y_T|^2 + 2 \int_0^T e^{\beta s} \big| \big\langle \delta Y_s, \delta_2 f_s \big\rangle \big| \uds
\nonumber \\
&\hspace{4.8cm}
+ \int_0^T \frac{\tilde\alpha L}{\gamma} e^{\beta s} \big(|\delta Y_s|^2 + |\delta Z_s|^2 \big) \uds \big|\cF_t\Big]\Big)^{p/2} \Big]
\nonumber \\
& \quad 
\leq \big(\frac{p}{p-2} \big)^{p/2}  ~ \Big\{  2^{p-2} \IE \Big[ \big( e^{\beta T}|\delta Y_T|^2 \big)^{p/2} \Big] + 2^{3p/2-2} \IE \Big[ \big( \int_0^T e^{\beta s} \big| \big\langle \delta Y_s, \delta_2 f_s \big\rangle \big| \uds \big)^{p/2} \Big] 
\nonumber \\
&	
\hspace{2cm} 
+ 2^{p-2} \IE \Big[ \big( \int_0^T \frac{\tilde\alpha L}{\gamma}e^{\beta s}|\delta Y_s|^2 \uds \big)^{p/2} \Big] + 2^{p-2} \IE \Big[ \big( \int_0^T \frac{\tilde\alpha L}{\gamma}e^{\beta s}|\delta Z_s|^2 \uds \big)^{p/2} \Big] \Big\}\label{eq:auxi4b}.
\end{align}
Note that we made use of the fact that for $a,b,c,d \in \IR$ and $p>2$, we have
\begin{align*}
\big| a + 2b + c + d \big|^{p/2} &\leq 2^{p/2-1} \big( |a+2b|^{p/2} + |c+d|^{p/2}  \big)
\\
&
\leq 2^{p-2} |a|^{p/2} + 2^{3p/2-2}|b|^{p/2} + 2^{p-2} |c|^{p/2} + 2^{p-2} |d|^{p/2}. 
\end{align*}
Plugging \eqref{eq:auxi4a} into \eqref{eq:auxi4b}, we get
\begin{align*}
\|\delta Y\|_{\cS_\beta^p}^p
&\quad
\leq \big(\frac{p}{p-2} \big)^{p/2}  \bigg\{ 2^{p-2}\IE \Big[ \big( e^{\beta T}|\delta Y_T|^2 \big)^{p/2} \Big] + 2^{3p/2 - 2} \IE \Big[ \big( \int_0^T e^{\beta s} \big| \big\langle \delta Y_s, \delta_2 f_s \big\rangle \big| \uds \big)^{p/2} \Big]
\nonumber\\
&\qquad  + 2^{p-2} \big(\frac{\tilde\alpha L}{\gamma}\big)^{p/2}
\|\delta Y\|_{\cH^p_\beta}^{p}
+ \big( \frac{\tilde\alpha L}{\gamma} \big)^{p/2} ~ D_2^{-1} \times 2^{p-2} \Big\{ 2^{p/2}\IE \Big[ \big( e^{\beta T}|\delta Y_T|^2 \big)^{p/2} \Big]
\\
&\qquad  + 2^{3p/2-1} \IE \Big[\big(\int_0^T e^{\beta s} \big| \big\langle \delta Y_s, \delta_2 f_s \big\rangle \big| \uds \big)^{p/2}\Big]   + 2^{3p-2}  d^2_{p/2}D_2^{-p/2} \| \delta Y \|^p_{\cS^p_\beta} \Big\} \bigg\}\\
&\quad
\leq \big(\frac{p}{p-2} \big)^{p/2}  ~ \bigg\{ \Big(2^{p-2} + 2^{3p/2-2} \big( \frac{\tilde\alpha L}{\gamma - \tilde\alpha L} \big)^{p/2}\Big)\IE \Big[ \big( e^{\beta T}|\delta Y_T|^2 \big)^{p/2} \Big]  
\nonumber\\
&\qquad + \big( 2^{3p/2-2} + 2^{5p/2-3} \big( \frac{\tilde\alpha L}{\gamma - \tilde\alpha L} \big)^{p/2} \big)\IE \Big[\big(\int_0^T e^{\beta s} \big| \big\langle \delta Y_s, \delta_2 f_s \big\rangle \big| \uds \big)^{p/2}\Big] 
\\
&\qquad + \Big( 2^{p-2}\big( \frac{\tilde\alpha L}{\gamma} T \big)^{p/2} + 2^{4p-4}\big( \frac{\tilde\alpha L}{\gamma - \tilde\alpha L} \big)^{p/2} D_2^{-p/2} d^2_{p/2} \Big)\| \delta Y \|^p_{\cS^p_\beta}		\bigg\},
\end{align*}
from which the estimate \eqref{eq:dummy2} follows. 

\smallskip

\textsf{Step 3:} At this stage, estimating $\IE \big[\Big(\int_0^T e^{\beta s} \big| \big\langle \delta Y_s, \delta_2 f_s \big\rangle \big| \uds \Big)^{p/2}\Big]$ will yield \eqref{eq:apriori_p}. This itself is a consequence of \eqref{eq:dummy2}: Young's inequality combined with the $\cS^p_\beta$-norm yields
\begin{align}
\label{eq:dummy7}
 \IE \big[\Big(\int_0^T e^{\beta s} \big| \big\langle \delta Y_s, \delta_2 f_s \big\rangle \big| \uds \Big)^{p/2}\Big]
&\leq \IE \Big[\big(\int_0^T e^{\beta s} | \delta Y_s | ~ |\delta_2 f_s | \uds \big)^{p/2}\Big] \nonumber\\
&
\leq \gamma_3 
\| \delta Y \|_{\cS^p_\beta}^p
+ \frac{1}{\gamma_3} \IE\Big[\Big(\int_0^T e^{\frac{\beta}{2} s}\big |\delta_2 f_s| \uds \Big)^p\Big],
\end{align}
which in conjunction with the particular choice 
\begin{equation}
\label{eq:gamma3}
\gamma_3 := \frac12 D_3 \big(\frac{p-2}{p}\big)^{p/2} \frac{(\gamma - \tilde\alpha L)^{p/2}}{2^{3p/2 - 2}(\gamma - \tilde\alpha L)^{p/2} + 2^{5p/2 - 3} (\tilde\alpha L)^{p/2} } > 0.
\end{equation}
Estimate \eqref{eq:dummy2} now leads to
\begin{align}
\label{eq:dummy8}
&\frac12 D_3\| \delta Y \|^p_{\cS^p_\beta} 
 \leq \big(\frac{p}{p-2} \big)^{p/2} ~  \Big\{ \big( 2^{p-2} + 2^{3p/2-2} \big( \frac{\tilde\alpha L}{\gamma - \tilde\alpha L} \big)^{p/2}  \big) \IE \big[ \big(e^{\beta T}|\delta Y_T|^2\big)^{p/2} \big]
\nonumber\\
&\hspace{2.5cm}+ \big( 2^{3p/2-2} + 2^{5p/2 - 3} \big( \frac{\tilde\alpha L}{\gamma - \tilde\alpha L} \big)^{p/2} \big) \gamma_3^{-1} ~ \IE \big[\big(\int_0^T e^{\frac{\beta}{2} s} |\delta_2 f_s| \uds \big)^p\big] \Big\}.
\end{align}
Notice that we trivially have $\| \delta Y \|^p_{\cH^p_\beta} \leq T^{p/2} ~ \| \delta Y \|^p_{\cS^p_\beta}$ so that
\begin{align*}
&
\| \delta Y \|^p_{\cS^p_\beta} + \| \delta Y \|^p_{\cH^p_\beta} 
\leq
C^1_p ~ \IE \big[ \big(e^{\beta T}|\delta Y_T|^2\big)^{p/2} \big] + C^2_p ~ \IE \big[\big(\int_0^T e^{\frac{\beta}{2} s} |\delta_2 f_s| \uds \big)^p\big],
\end{align*}
where the constants $C^1_p$ and $C^2_p$ are defined as
\begin{align*}
&
C^1_p:=
2 (1 + T^{p/2}) D_3^{-1}\big(\frac{p}{p-2}\big)^{p/2} 
 \big( 2^{p-2} + 2^{3p/2-2} \big( \frac{\tilde\alpha L}{\gamma - \tilde\alpha L} \big)^{p/2}  \big),
\\
&
C^2_p:=
 2 (1 + T^{p/2}) D_3^{-1}\big(\frac{p}{p-2}\big)^{p/2}
\big( 2^{3p/2-2} + 2^{5p/2 - 3} \big( \frac{\tilde\alpha L}{\gamma - \tilde\alpha L} \big)^{p/2} \big) \gamma_3^{-1} .
\end{align*}
Moreover, it follows from \eqref{eq:auxi4a}, \eqref{eq:dummy7} and \eqref{eq:dummy8} that 
\begin{align*}
&\| \delta Z \|^p_{\cH^p_\beta} \leq C^3_p ~ \IE \big[ \big(e^{\beta T}|\delta Y_T|^2\big)^{p/2} \big] + C^4_p ~ \IE \big[\big(\int_0^T e^{\frac{\beta}{2} s} |\delta_2 f_s| \uds \big)^p\big].
\end{align*}
where the constants $C^3_p$ and $C^4_p$ are defined as
\begin{align*}
C^3_p:=&
2D_3^{-1}\big(\frac{p}{p-2}\big)^{p/2}  D_2^{-p/2}
\Big[ 2^{p/2} \\
&\hspace{3.2cm} + \big( 2^{3p-2}d^2_{p/2}D_2^{-p/2} + 2^{3p/2 - 1} \gamma_3 \big)  \big( 2^{p-2} + 2^{3p/2 -2} \big(\frac{\tilde\alpha L}{\gamma - \tilde\alpha L} \big)^{p/2} \big) \Big],
\\
C^4_p:=& 
2D_3^{-1}\big(\frac{p}{p-2}\big)^{p/2}  D_2^{-p/2}
\Big[ \big( 2^{3p-2} d^2_{p/2} D_2^{-p/2} + 2^{3p/2 -1} \gamma_3 \big)\times \\
&\hspace{3.2cm} \times \big( 2^{3p/2 -2} + 2^{5p/2-3} \big(\frac{\tilde\alpha L}{\gamma - \tilde\alpha L}\big) \big)^{p/2} \gamma_3^{-1} + 2^{3p/2 - 1} \gamma_3 \Big],
\end{align*}
(recall that $\gamma_3$ is defined by \eqref{eq:gamma3}).
From the above inequalities we obtain \eqref{eq:apriori_p}, where the positive constant $C_p$ is given by
\begin{align}
\label{eq:dummy_whatever}
C_p := \max \big\{ C_p^1 + C_p^3, C_p^2 + C_p^4  \big\}.
\end{align}
\hfill $\Box$

\begin{remark}
\label{constants Di independent of data}
Notice that none of the constants $C_p$, $C_p^i$ and $D_i$ ($i\in\{1,\cdots,4\}$) depend on the terminal condition or $f(\cdot,0,0)$. The only problem related data they do depend on are: $K$, $T$, $\alpha$ and $m$. 
\end{remark}

\begin{remark}
In the previous proof it is clear that our choices for the constants $\gamma_2$ and $\gamma_3$ do not lead to the most general statement of Proposition \ref{lemma:apriori}. They were chosen in this way to avoid a more complex statement, i.e. the constant $C_p$ given in \eqref{eq:dummy_whatever} would then depend on $\gamma_2$ and $\gamma_3$ and jointly with \eqref{eq:D_4} we would also have the condition $D_3>0$. The conditions of Theorem \ref{theo:picard} below depend on the smallness of $C_p$ as given by \eqref{eq:dummy_whatever}. The particular choices for $\gamma_2$ and $\gamma_3$ lead to simpler expressions in our statements.
\end{remark}

\subsubsection*{Moment estimates - part II}

As a by-product of the two previous propositions we obtain 
a result on the moment estimates for the solution of BSDE \eqref{eq:bsde1}.
\begin{corollary}[Moment estimates]
\label{coro:momentestimates}
Let $p \geq 2$ and $\beta>0$. Let $(Y,Z) \in\cS^p_\beta \times \cH^p_\beta$ be the solution of the delay BSDE \eqref{eq:bsde1} with terminal condition $\xi$ and generator $f$ satisfying (H0)-(H4). For $K,T,\alpha$ small enough, there exists a constant $C_p$ (which, like in Propositions \ref{prop:apriori_p2} and \ref{lemma:apriori}, depends on several constants that can be suitably chosen) such that  
\begin{align*}
\| Y \|^p_{\cS^p_\beta} + \| Y \|^p_{\cH^p_\beta} + \| Z \|^p_{\cH^p_\beta}
&
\leq C_p \Big\{ \IE \Big[ \big(e^{\beta T} | Y_T|^2 \big)^{p/2} \Big]
 +\IE \Big[ \big(\int_0^T e^{\beta s}| f(s,0,0)|^2 \uds \big)^p \Big] \Big\}.
\end{align*}
\end{corollary}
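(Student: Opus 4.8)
The plan is to derive both the $p=2$ and $p>2$ estimates directly from the a priori estimates of Proposition \ref{prop:apriori_p2} and Proposition \ref{lemma:apriori}, by comparing the given solution against the trivial one. Concretely, I would set $(Y^1,Z^1) := (Y,Z)$, the solution associated with the data $(\xi,f)$ satisfying (H0)-(H4), and take $(Y^2,Z^2) := (0,0)$. The pair $(0,0)$ is the (unique) solution of the delay BSDE \eqref{eq:bsde1} with zero terminal condition $\xi^2 = 0$ and zero generator $f^2 \equiv 0$; it trivially belongs to $\cS^p_0 \times \cH^p_0$ and satisfies \eqref{eq:bsde1}, so both propositions apply with $f^1 = f$ as the Lipschitz generator. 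With this choice one has $\delta Y = Y$, $\delta Z = Z$ and $\delta Y_T = Y_T$. Moreover, since the delayed averages of the identically-zero process vanish, $(Y^2 \cdot \ay)(s) = 0$ and $(Z^2 \cdot \az)(s) = 0$, whence the driver term collapses to $\delta_2 f_s = f(s,0,0) - 0 = f(s,0,0)$.

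Because $K,T,\alpha$ are assumed small, admissible constants $\beta,\gamma$ with $D_1,D_2>0$ (and additionally $D_3>0$ when $p>2$) exist by Remark \ref{remark1onhowsmallKTalphaare}, so the estimates are available. For $p=2$, Proposition \ref{prop:apriori_p2} yields the claim immediately, since after the substitution its right-hand side reads exactly $\IE\big[e^{\beta T}|Y_T|^2\big] + \IE\big[\int_0^T e^{\beta s}|f(s,0,0)|^2\uds\big]$, which is the asserted bound (the exponent $p/2=1$ being implicit).

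For $p>2$, Proposition \ref{lemma:apriori} gives
\[ \|Y\|^p_{\cS^p_\beta} + \|Y\|^p_{\cH^p_\beta} + \|Z\|^p_{\cH^p_\beta} \leq C_p \Big\{ \IE\big[(e^{\beta T}|Y_T|^2)^{p/2}\big] + \IE\Big[\Big(\int_0^T e^{\frac{\beta}{2}s}|f(s,0,0)|\uds\Big)^p\Big] \Big\}. \]
The only remaining step is to recast the driver term in the $L^2$-form of the statement. An application of the Cauchy-Schwarz inequality in the $\uds$-integral gives $\int_0^T e^{\frac{\beta}{2}s}|f(s,0,0)|\uds \leq T^{1/2}\big(\int_0^T e^{\beta s}|f(s,0,0)|^2\uds\big)^{1/2}$, so that $\big(\int_0^T e^{\frac{\beta}{2}s}|f(s,0,0)|\uds\big)^p \leq T^{p/2}\big(\int_0^T e^{\beta s}|f(s,0,0)|^2\uds\big)^{p/2}$; absorbing the harmless factor $T^{p/2}$ into $C_p$ produces the bound with the $L^2$-norm of $f(\cdot,0,0)$ raised to the power $p/2$, which is precisely the exponent compatible with the finiteness granted by (H3).

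I do not anticipate a genuine obstacle: the result is essentially a corollary of the two a priori estimates, and by Remark \ref{constants Di independent of data} the relevant constants depend only on $K,T,\alpha,m$ and not on the terminal data or on $f(\cdot,0,0)$, so a single $C_p$ serves. The only points demanding care are verifying that $(0,0)$ is an admissible solution in the correct space (so that the comparison is legitimate) and performing the Cauchy-Schwarz conversion in the $p>2$ case, which is also what fixes the exponent on the $L^2$-norm of the driver at its zero state.
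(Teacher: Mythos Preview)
Your approach is correct and is exactly what the paper intends: the corollary is stated without proof, simply as a ``by-product of the two previous propositions'', and your specialization $(Y^1,Z^1)=(Y,Z)$, $(Y^2,Z^2)=(0,0)$ with $f^2\equiv 0$, $\xi^2=0$ is the natural way to extract moment estimates from a priori estimates. Your observation that the Cauchy--Schwarz step produces the exponent $p/2$ rather than the $p$ appearing in the displayed statement is also correct; the exponent $p/2$ is the one consistent with hypothesis (H3) and is what is actually used downstream, so the $p$ in the corollary is most likely a typographical slip.
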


\subsubsection*{The existence and uniqueness result}

The moment and a priori estimates in \cite{DelongImkeller} are tailor-made for a Picard iteration procedure in $\cH^2\times \cH^2$. To make such a technique work in general $L^p$-spaces we needed to state a priori estimates in the form of Proposition \ref{prop:apriori_p2} and Proposition \ref{lemma:apriori}. In view of those results one can naturally expect a compatibility condition on $K,T$ and $\alpha$ more complicated than that of Theorem \ref{theo:DelongImkeller_thm2.1} for a solution to exist.
\vspace{0.3cm}

With estimate \eqref{eq:apriori_p} at hand, we now proceed to show the existence and uniqueness of solutions to \eqref{eq:bsde1} in $\cS^p_\beta \times \cH^p_\beta$ for $p>2$. For $p=2$, Theorem 2.1 from \cite{DelongImkeller} (recalled in our Theorem \ref{theo:DelongImkeller_thm2.1}) yields a sufficient condition which guarantees the standard Picard iteration to converge and proves the existence and uniqueness of solutions to \eqref{eq:bsde1}. We will show in the following result that for $p>2$, the convergence of the same Picard iteration is retained. What is needed to achieve this goal is to put up some extra effort to show that the Picard iterates $(Y^n,Z^n)$ satisfy the corresponding $\cS^p_\beta, \cH^p_\beta$-integrability properties.

\begin{theorem}\label{theo:picard} 
Let $p>2$ and assume that (H0)-(H4) hold. Let $K$ or $T$ or $\alpha$ be small enough such that for some $\beta,\gamma>0$ the conditions of Proposition  \ref{lemma:apriori} are satisfied. If further $K$ or $T$ or $\alpha$ are small enough such that we have  
\begin{align} 
\label{eq:contraction}
2^{p/2-1}C_p \Big(L T \int_{-T}^0 e^{-\beta s} \rho(\ud s) \Big)^{p/2} \max\{1,T^{p/2}\} < 1,\ \text{ for }\rho \in  \{\ay,\az \},
\end{align}
where $C_p = C_p(\beta,\gamma,\tilde\alpha,L,T,m) > 0$ is given by \eqref{eq:dummy_whatever}, $\tilde\alpha$ is given by \eqref{eq:alpha_tilde} and $L=K\alpha$, then the BSDE \eqref{eq:bsde1} admits a unique solution $(Y,Z)$ in $\cS^p_{\beta} \times \cH^p_{\beta}$.
\end{theorem}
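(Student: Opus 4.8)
The plan is to construct the solution as the fixed point of the standard Picard iteration and to turn that iteration into a contraction on the Banach space $\cS^p_\beta(\IR^m)\times\cH^p_\beta(\IR^{m\times d})$ by means of the a priori estimate \eqref{eq:apriori_p}. Starting from $(Y^0,Z^0)=(0,0)$, I define $(Y^{n+1},Z^{n+1})$ as the solution of the \emph{non-delayed} BSDE
\begin{align*}
Y^{n+1}_t = \xi + \int_t^T f\big(s,(Y^n\cdot\ay)(s),(Z^n\cdot\az)(s)\big)\uds - \int_t^T Z^{n+1}_s \udws, \quad 0\le t\le T,
\end{align*}
whose generator $g^n_s:=f\big(s,(Y^n\cdot\ay)(s),(Z^n\cdot\az)(s)\big)$ is a frozen $\IF$-adapted process that does not depend on $(Y^{n+1},Z^{n+1})$. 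Once $g^n$ is shown to be sufficiently integrable, existence and uniqueness of $(Y^{n+1},Z^{n+1})$ follow from the martingale representation theorem applied to the $L^p$-martingale $\IE[\xi+\int_0^T g^n_s\uds\mid\cF_t]$, from the Burkholder--Davis--Gundy inequality (giving $Z^{n+1}\in\cH^p_\beta$) and from the standard $L^p$-bound for non-delayed BSDEs, as in the first lemma of Section \ref{section:apriori} (giving $Y^{n+1}\in\cS^p_\beta$).

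The core of the argument is the contraction estimate. Writing $\delta Y^{n+1}=Y^{n+1}-Y^n$, $\delta Z^{n+1}=Z^{n+1}-Z^n$, the increment solves a non-delayed BSDE with zero terminal condition and driver $g^n_s-g^{n-1}_s$. I would apply Proposition \ref{lemma:apriori} to this increment, identifying the two frozen generators $g^n,g^{n-1}$ with $f^1,f^2$: since they do not depend on the spatial variables they are trivially $K$-Lipschitz, the terminal difference vanishes, and $\delta_2 f_s$ reduces to $g^n_s-g^{n-1}_s$. Estimate \eqref{eq:apriori_p}, which remains valid with the constant $C_p$ of \eqref{eq:dummy_whatever} computed for the Lipschitz constant $K$ (any $0$-Lipschitz generator being \emph{a fortiori} $K$-Lipschitz), then yields
\begin{align*}
\|\delta Y^{n+1}\|^p_{\cS^p_\beta} + \|\delta Z^{n+1}\|^p_{\cH^p_\beta} \le C_p\,\IE\Big[\Big(\int_0^T e^{\frac\beta2 s}|g^n_s-g^{n-1}_s|\uds\Big)^p\Big].
\end{align*}
Bounding the right-hand side is where the delay structure re-enters: (H2') controls $|g^n_s-g^{n-1}_s|^2$ by $L\big[(|\delta Y^n|^2\cdot\ay)(s)+(|\delta Z^n|^2\cdot\az)(s)\big]$, and after a Cauchy--Schwarz step in $s$, the weighted interchange of integration from \eqref{eq:tmp_02}, and the elementary inequality $(a+b)^{p/2}\le 2^{p/2-1}(a^{p/2}+b^{p/2})$, I obtain
\begin{align*}
\IE\Big[\Big(\int_0^T e^{\frac\beta2 s}|g^n_s-g^{n-1}_s|\uds\Big)^p\Big] \le 2^{p/2-1}\Big(LT\int_{-T}^0 e^{-\beta v}\rho(\ud v)\Big)^{p/2}\max\{1,T^{p/2}\}\big(\|\delta Y^n\|^p_{\cS^p_\beta}+\|\delta Z^n\|^p_{\cH^p_\beta}\big),
\end{align*}
for $\rho\in\{\ay,\az\}$, where I also used $\|\delta Y^n\|^p_{\cH^p_\beta}\le T^{p/2}\|\delta Y^n\|^p_{\cS^p_\beta}$. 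The resulting contraction factor is exactly the left-hand side of \eqref{eq:contraction}, so under that hypothesis the Picard map is a strict contraction; its unique fixed point $(Y,Z)\in\cS^p_\beta\times\cH^p_\beta$ is the solution, and passing to the limit along the Cauchy sequence confirms that it solves \eqref{eq:bsde1}.

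Two points remain. For \emph{integrability of the iterates} I would argue by induction: assuming $(Y^n,Z^n)\in\cS^p_\beta\times\cH^p_\beta$, the bound $|g^n_s|\le|f(s,0,0)|+\sqrt{L}\,[(|Y^n|^2\cdot\ay)(s)+(|Z^n|^2\cdot\az)(s)]^{1/2}$ combined with (H3) and Lemma \ref{lemma:interchange} gives $\IE[(\int_0^T|g^n_s|^2\uds)^{p/2}]<\infty$, which together with $\xi\in L^p$ propagates $\cS^p_\beta\times\cH^p_\beta$-membership to $(Y^{n+1},Z^{n+1})$; this is the extra effort needed in the $L^p$ (as opposed to $L^2$) setting. \emph{Uniqueness} follows immediately from Proposition \ref{lemma:apriori}: two solutions of \eqref{eq:bsde1} with identical data have $\delta Y_T=0$ and $\delta_2 f\equiv 0$, so the right-hand side of \eqref{eq:apriori_p} vanishes and the solutions coincide.

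The main obstacle I anticipate is precisely the combination flagged just before the theorem: showing that the Picard iterates do not leave $\cS^p_\beta\times\cH^p_\beta$, and, in parallel, matching the frozen-generator increment BSDE to the hypotheses of Proposition \ref{lemma:apriori} so that the contraction constant coincides with the $C_p$ of \eqref{eq:dummy_whatever} entering \eqref{eq:contraction}. The delicate bookkeeping is to verify that the weighted interchange produces the factor $\big(LT\int_{-T}^0 e^{-\beta v}\rho(\ud v)\big)^{p/2}$ uniformly in both $\rho=\ay$ and $\rho=\az$, which is why \eqref{eq:contraction} must be imposed for each $\rho$ separately.
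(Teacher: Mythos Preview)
Your proposal is correct and follows essentially the same route as the paper: Picard iteration with frozen generators, an inductive check that each iterate stays in $\cS^p_\beta\times\cH^p_\beta$, and then the a~priori estimate \eqref{eq:apriori_p} applied to the increment to produce exactly the contraction factor in \eqref{eq:contraction}. The paper's proof differs only in emphasis: it spends most of its length on the integrability step you flag as the ``main obstacle'' (applying It\^o's formula to $e^{\beta t}|Y^{n+1}_t|^2$ and a BDG/Young argument to get $Z^{n+1}\in\cH^p_\beta$ explicitly), whereas your sketch of that step via the bound on $g^n$ and Lemma~\ref{lemma:interchange} is correct but would need those same details filled in.
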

\begin{remark}
Note that, by definition of the constant $C_p$, condition \eqref{eq:contraction} is satisfied if either $T$ or $K$ or $\alpha$ is small enough since $ \displaystyle{\lim_{T K \alpha \to 0} C_p <+\infty }$ which in turn implies
$$ \lim_{T K \alpha \to 0} C_p (\alpha K  T )^{p/2} =0.$$
\end{remark}
\textit{Proof of Theorem \ref{theo:picard}.}
Let $p>2$. 
Throughout let $t\in[0,T]$. The proof is based on the standard Picard iteration: we initialize by $Y^0 = 0$ and $Z^0=0$ and define recursively
\begin{align}
\label{eq:iterate}
Y^{n+1}_t &= \xi + \int_t^T f\big( s,\Gamma^n(s) \big) \uds - \int_t^T Z^{n+1}_s \udws, \quad 0 \leq t \leq T,
\end{align}
with 
$\Gamma^n(s) = \big(\int_{-T}^0 Y^n_{s+v} \ay(\ud v), \int_{-T}^0 Z^n_{s+v} \az(\ud v)  \big)$ for $s\in[0,T]$ and $n\in\IN$.
In the following, let $C>0$ denote some generic constant which may vary from line to line but is always independent of $n\in\IN$. We proceed by induction, where the existence of $(Y^1,Z^1)\in \cS^p_{\beta} \times \cH^p_{\beta}$ follows from classic stochastic analysis arguments. For $n\geq 1$, assume that $(Y^n,Z^n) \in \cS^p_{\beta} \times \cH^p_{\beta}$ solves the BSDE \eqref{eq:iterate} and we now prove that \eqref{eq:iterate} has a unique solution $(Y^{n+1},Z^{n+1}) \in \cS^p_{\beta} \times \cH^p_{\beta}$. Note that due to 
\begin{align}
&\IE \Big[ \big( \int_0^T |f(s,\Gamma^n(s))| \uds \big)^p \Big]
\nonumber \\
&\quad 
\leq \IE \Big[ \Big( \int_0^T |f(s,0,0)| \uds + \int_0^T |f(s,\Gamma^n(s))-f(s,0,0)| \uds \Big)^{p} \Big]
\nonumber\\
&\quad 
\leq 2^{p-1}~\IE \Big[ \Big( \int_0^T |f(s,0,0)| \uds \Big)^p +  \Big( T \int_0^T |f(s,\Gamma^n(s))-f(s,0,0)|^2 \uds \Big)^{p/2} \Big]
\nonumber\\
&\quad
\leq 2^{p-1}  ~ \IE \Big[ \big( \int_0^T |f(s,0,0)| \uds \big)^p
\nonumber\\
&\qquad + L^{p/2} T^{p/2} \Big\{  \int_0^T \int_{-T}^0 |Y^n_{s+v}|^2 \ay(\ud v) \uds + \int_0^T\int_{-T}^0 |Z^n_{s+v}|^2 \az(\ud v)\uds \Big\}^{p/2}  \Big]
\nonumber\\
&\quad
\leq 2^{p-1} \IE \Big[ \big( \int_0^T |f(s,0,0)| \uds \big)^p + (\alpha K T)^{p/2} \Big\{  \int_0^T |Y^n_{s}|^2 \uds + \int_{0}^T |Z^n_{s}|^2 \uds\Big\}^{p/2}  \Big]
\nonumber\\
&\quad
\leq 2^{p-1} \IE \Big[ \big( \int_0^T |f(s,0,0)| \uds \big)^p \Big] + 2^{p/2-1}(2 \alpha K T)^{p/2} \Big( T^{p/2} \| Y^n \|^p_{\cS^p_0} + \| Z^n \|^p_{\cH^p_0} \Big)
<\infty, \label{eq:dummy-gonca}
\end{align}
the martingale representation yields a uniquely determined process $Z^{n+1} \in \cH^2_0$ such that
\begin{align*}
\IE \Big[ \xi + \int_0^T f\big(s,\Gamma^n(s)\big) \uds \big| \cF_t \Big] = \IE \big[ \xi + \int_0^T f\big(s,\Gamma^n(s)\big) \uds \big] + \int_0^t Z^{n+1}_s \udws,\quad \text{for any }t\in[0,T].
\end{align*}
We then define $Y^{n+1}$ to be a continuous version of
$Y^{n+1}_t = \IE[ \xi + \int_t^T f(s,\Gamma^n(s)) \uds |\cF_t]$.
Let us first show that $Y^{n+1} \in \cS^p_{0}$:  
\begin{align*}
\| Y^{n+1}\|_ {\cS^p_0}^p
=  \IE \Big[ \sup_{t\in[0,T]} |Y^{n+1}_t|^p \Big]
&\leq \IE \Big[  \sup_{t\in[0,T]} \Big(\IE \big[\, |\xi| + \int_0^T |f(s,\Gamma^n(s))| \uds\,|\cF_t\big]\Big)^p  \Big]
\\
&
\leq \Big( \frac{p}{p-1} \Big)^{p} \IE \Big[  \Big( |\xi| + \int_0^T |f(s,\Gamma^n(s))| \uds \Big)^p  \Big]
\\
&
\leq 2^{p-1} \Big( \frac{p}{p-1} \Big)^{p} ~ \IE \Big[ |\xi|^p + \Big(\int_0^T |f(s,\Gamma^n(s))| \uds \Big)^p  \Big]
<\infty,
\end{align*}
where the last inequality follows from the fact that $\xi \in L^p$ and \eqref{eq:dummy-gonca}. This proves that $Y^{n+1} \in \cS^p_{0}$. Since all $\| \cdot \|_{\cS^p_\beta}$-norms are equivalent it follows that $Y^{n+1} \in \cS^p_{\beta}$. To see that $Z^{n+1} \in \cH^p_\beta$, recall that It\^o's formula applied to $e^{\beta t} |Y^{n+1}_t|^2$ yields
\begin{align*}
&e^{\beta t}|Y^{n+1}_t|^2 + \int_t^T \beta e^{\beta s} |Y^{n+1}_s|^2 \uds + \int_t^T e^{\beta s} |Z^{n+1}_s|^2 \uds\\
&\quad = e^{\beta T} |\xi|^2 + \int_t^T 2e^{\beta s} \langle Y^{n+1}_s,f(s,\Gamma^n(s))\rangle \uds - \int_t^T 2 e^{\beta s} \langle Y^{n+1}_s, Z^{n+1}_s \udws\rangle.
\end{align*}
In the above drop the two $Y$ terms in the LHS of the equation, take $t=0$, apply absolute values to both sides and then raise to power $p/2$. It follows that
\begin{align}
\label{eq:tmp1}
&
\big( \int_0^T e^{\beta s} |Z^{n+1}_s|^2 \uds \big)^{p/2}
\nonumber\\
& \quad 
\leq \Big(  e^{\beta T} |\xi|^2 + \int_0^T 2e^{\beta s}  |Y^{n+1}_s| ~ |f(s,\Gamma^n(s))| \uds + 
\big| \int_0^T 2 e^{\beta s} \langle Y^{n+1}_s, Z^{n+1}_s \udws \rangle  \big| \Big)^{p/2}
\nonumber\\
&\quad
\leq ~ 2^{p/2-1} \big(e^{\beta T} |\xi|^2\big)^{p/2} + 2^{p-2} \big(\int_0^T 2 e^{\beta s}  |Y^{n+1}_s| ~ |f(s,\Gamma^n(s))| \uds\big)^{p/2}
\nonumber\\
&\qquad 
+ 2^{3p/2-2}\big| \int_0^T  e^{\beta s} \langle Y^{n+1}_s, Z^{n+1}_s \udws \rangle   \big|^{p/2}.
\end{align}
On the one hand, we have
\begin{align}
\label{eq:tmp-between-tmp1and-tmp3}
&\IE \Big[ \Big( \int_0^T 2e^{\beta s} |Y^{n+1}_s| ~ |f(s,\Gamma^n(s))| \uds  \Big)^{p/2} \Big]
\nonumber
\\
&\quad 
\leq \IE \Big[ \Big( \int_0^T 2e^{\beta s} |Y^{n+1}_s| ~ |f(s,\Gamma^n(s))-f(s,0,0)| \uds + \int_0^T 2e^{\beta s} |Y^{n+1}_s| ~ |f(s,0,0)| \uds \Big)^{p/2} \Big]
\nonumber
\\
&\quad 
\leq C \Big\{ \| Y^{n+1} \|^p_{\cS^p_\beta} + \IE \Big[ \big( \int_0^T e^{\frac{\beta}{2} s}|f(s,0,0)| \uds \big)^p \Big] + \| Y^{n} \|^p_{\cS^p_\beta} + \| Z^{n} \|^p_{\cH^p_\beta} \Big\}
< \infty,
\end{align}
where we have used the Lipschitz condition of $f$ combined with calculations similar to those of \eqref{eq:dummy-gonca} and 
\[
\int_0^T 2e^{\beta s} |Y^{n+1}_s| ~ |f(s,0,0)| \uds 
\leq 
\sup_{0\leq t\leq T} e^{\beta t} |Y^{n+1}_t|^2 + \Big(\int_0^T e^{\frac{\beta}{2} s} |f(s,0,0)| \uds\Big)^2.
\]
On the other hand, by the same arguments as in \eqref{eq:auxi3} we find the following estimate
\begin{align}
\label{eq:tmp3}
\IE \Big[\,\big| \int_0^T e^{\beta s} \langle Y^{n+1}_s, Z^{n+1}_s \udws \rangle \big|^{p/2} \Big]
&
\leq d_{p/2} ~ \Big\{ \kappa~
\| Y^{n+1} \|_{\cS^p_\beta}^p
+ \frac1\kappa 
\| Z^{n+1} \|_{\cH^p_\beta}^p
\Big\},
\end{align}
where the last line the constant $\kappa>0$ appear due to Young's inequality. Now choosing $\kappa>0$ such that $1- 2^{2p-2}~d_{p/2}\kappa^{-1} > 0$, it follows from \eqref{eq:tmp1}, \eqref{eq:tmp-between-tmp1and-tmp3} and \eqref{eq:tmp3} that
\begin{align*}
\big(1- \frac{2^{2p-2}~d_{p/2}}{\kappa}\big)
\|Z^{n+1}\|_{\cH^p_\beta}^p
&
\leq C \Big\{ \IE \big[  \big(e^{\beta T} |\xi|^2 \big)^{p/2} \big] +  \| Y^{n+1} \|^p_{\cS^p_\beta} 
\\
&\hspace{1cm}
+ \IE \big[ \big( \int_0^T |f(s,0,0)| \uds \big)^p \big] + \| Y^{n} \|^p_{\cS^p_\beta} + \| Z^{n} \|^p_{\cH^p_\beta} \Big\}
< \infty.
\end{align*}
This proves that $Z^{n+1} \in \cH^p_\beta$.

\smallskip

In the next step, we prove that the sequence $(Y^n,Z^n)$ converges in  $\cS^p_\beta \times \cH^p_\beta$. Under the current assumptions one is able to apply a priori estimate \eqref{eq:apriori_p} to obtain
\begin{align*}
&\| Y^{n+1} - Y^n \|^p_{\cS^p_\beta} + \| Z^{n+1} - Z^n \|^p_{\cH^p_\beta}\\
&\quad \leq C_p ~ \IE \Big[ \Big( \int_0^T e^{\frac\beta2 s} \big| f(s,\Gamma^n(s)) - f(s,\Gamma^{n-1}(s)) \big| \uds \Big)^p \Big]\\
&\quad \leq C_p T^{p/2}~ \IE \Big[ \Big( \int_0^T e^{\beta s} \big| f(s,\Gamma^n(s)) - f(s,\Gamma^{n-1}(s)) \big|^2 \uds \Big)^{p/2} \Big].
\end{align*}
In analogy to the calculation carried out in Equation (2.7) in \cite{DelongImkeller}[Proof of Theorem 2.1], it is easy to see that we have
\begin{align*}
&\| Y^{n+1} - Y^n \|^p_{\cS^p_\beta} + \| Z^{n+1} - Z^n \|^p_{\cH^p_\beta}
\\
&\quad 
\leq C_p T^{p/2} ~ \IE \Big[ \Big( L \max\big\{ \int_{-T}^0 e^{-\beta s} \ay(\uds), \int_{-T}^0 e^{-\beta s} \az(\uds)  \big\}
\\
&\qquad\qquad  \times \big( T\sup_{t\in[0,T]} e^{\beta t}|Y^n_t-Y^{n-1}_t|^2 + \int_0^T e^{\beta s}|Z^n_s - Z^{n-1}_s|^2 \uds \big)  \Big)^{p/2} \Big]
\\
&\quad
\leq C_p T^{p/2} ~ 2^{p/2-1}~\Big(L \max\big\{ \int_{-T}^0 e^{-\beta s} \ay(\uds), \int_{-T}^0 e^{-\beta s} \az(\uds)  \big\}\Big)^{p/2}
\\
&\qquad\qquad
\times  \Big( T^{p/2} \| Y^n-Y^{n-1} \|^p_{\cS^p_\beta} + \| Z^n-Z^{n-1} \|^p_{\cH^p_\beta} \Big)
\\
&\quad
\leq C_p ~ 2^{p/2-1}~\Big(L T \max\big\{ \int_{-T}^0 e^{-\beta s} \ay(\uds), \int_{-T}^0 e^{-\beta s} \az(\uds)  \big\}\Big)^{p/2} ~ \max\big\{ 1,T^{p/2} \big\}
\\
&\qquad\qquad
\times \Big(  \| Y^n-Y^{n-1} \|^p_{\cS^p_\beta} + \| Z^n-Z^{n-1} \|^p_{\cH^p_\beta} \Big).
\end{align*}
Hence, by \eqref{eq:contraction}, the standard fixed point argument yields that $(Y^n,Z^n)$ converges in $\cS^p_\beta \times \cH^p_\beta$, which finishes the proof.
\hfill $\Box$

\medskip

\section{Decoupled FBSDE with time delayed generators}
\label{section:diff}
The objective of this section is to extend the results from \cite{DelongImkeller,DelongImkeller2} to the case of decoupled forward-backward stochastic differential equations. For measurable functions $b,\sigma,g,f$, specified in more detail below, we study the time delayed FBSDE
\begin{align}
\label{eq:fwd1}
X^x_t &= x + \int_0^t b(s,X^x_s) \uds + \int_0^t \sigma(s,X^x_s) \udws, \quad x \in \IR^d,\\
\label{eq:bwd1}
Y^x_t &= g(X^x_T) + \int_t^T f\big(s,\Theta^x(s)\big) \uds - \int_t^T Z^x_s \udws, \quad  0 \leq t \leq T,
\end{align}
where for $t\in[0,T]$, we write
\begin{align}
\nonumber
\Theta^x(t)
&=\big((X^x \cdot \ax)(t),(Y^x\cdot \ay)(t),(Z^x\cdot \az)(t)\big)\\
\label{eq:theta}
&=\Big(\int_{-T}^0 X^x_{t+v} \ax(\ud v),\int_{-T}^0 Y^x_{t+v} \ay(\ud v),\int_{-T}^0 Z^x_{t+v} \az(\ud v) \Big),
\end{align}
with given deterministic finite measures $\ax,\ay$ and $\az$ supported on $[-T,0)$. The coefficients $b,\sigma,g,f$ appearing in \eqref{eq:fwd1}-\eqref{eq:bwd1} are assumed to satisfy certain smoothness and integrability conditions such that the backward equation \eqref{eq:bwd1} falls back into the setting of (H0)-(H4) from Section \ref{sec:delay_bsde}. 
More precisely, we assume the following to hold:
\begin{enumerate}[(F1)]
\item[(F0)] $\ax$, $\ay,\az$ are three non-random, finitely valued measures supported on $[-T,0)$;
\item $g:\IR^d\to\IR^m$ is continuous differentiable with uniformly bounded first order derivatives, i.e. there exists $K'>0$ such that $|\nabla g|\leq K'$;
\item $f:[0,T]\times\IR^d\times\IR^m\times \IR^{m\times d} \to \IR^m$ is continuously differentiable with uniformly bounded derivatives, i.e. there exists a constant $K>0$ such that\footnote{\label{matrixtensorfootnote}We remark that this bound is taken over the corresponding Euclidean norm of the derivatives matrix/tensor. To avoid possible confusion when using tensors one can always interpret $f$ in the variable $z\in\IR^{m\times d}$ as taking not a matrix but a sequence of $d$-dimensional vectors $z_i\in \IR^d$ ($i\in\{1,\cdots,m\}$). The condition would then read $\sum_{i=1}^m |\nabla_{z_i} f| \leq \sqrt{K/3}$ where 
$f:[0,T]\times\IR^d\times\IR^m\times \underbrace{\IR^{d}\times \cdots \times \IR^d}_{m-\text{times}} \to \IR^m$.} $|\nabla_x f|,\, |\nabla_y f|,\,|\nabla_z f| \leq \sqrt{K/3}$ holds uniformly in all variables; $f$ satisfies a uniform Lipschitz condition with Lipschitz constant $\sqrt{K/3}$.
\item $b: [0,T] \times \IR^d \to \IR^d$ and $\sigma : [0,T] \times \IR^d \to \IR^{d\times d}$ are continuously differentiable functions with bounded derivatives; $|b(\cdot,0)|$ and $|\sigma(\cdot,0)|$ are uniformly bounded; $\sigma$ is elliptic; 
\item $\big( \int_0^T |f(s,0,0,0)|^2 \uds \big)^{p/2} < \infty$ for $p\geq 2$;
\item $f(t,\cdot,\cdot,\cdot)\1_{(-\infty,0)}(t) = 0$; 
\end{enumerate}
Condition (F3) is a standard assumption which guarantees the existence and uniqueness of the solution of SDE \eqref{eq:fwd1}. Furthermore, condition (F2) implies that the generator is uniformly Lipschitz continuous in $(x,y,z) \in \IR^d \times \IR^m \times \IR^{m\times d}$. In analogy to conditions (H2) and (H2') from section \ref{sec:delay_bsde}, let us write down the following implication of the Lipschitz condition (F2): with the constant $K>0$ chosen above, for any $t\in[0,T]$ and any sufficiently integrable vector or matrix valued processes $u,u'$, $y,y'$ and $z,z'$ it holds that 

\begin{align*}
\text{(F2')} \quad &\Big| f\big(t,(u\cdot\ax)(t),(y\cdot\ay)(t),(z\cdot\az)(t) \big) - f\big(t,(u'\cdot\ax)(t),(y'\cdot\ay)(t),(z'\cdot\az)(t) \big) \Big|^2
\\
&\qquad
\leq K\Big( \big| (u\cdot \ax) (t) - (u'\cdot \ax) (t) \big|^2
\\
&\hspace{2.6cm} + \big| (y\cdot \ay) (t) - (y'\cdot \ay) (t) \big|^2
 + \big| (z\cdot \az) (t) - (z'\cdot \az) (t) \big|^2 \Big)
\\
&\qquad
\leq K \ax([-T,0])  \big((x-x')^2\cdot\ax\big)(t)+ L\Big(\big((y-y')^2\cdot\ay\big)(t) + \big((z-z')^2\cdot\az\big)(t)\Big)
\end{align*}
where $L:=K \alpha$ with $\alpha$ defined in \eqref{eq:alpha1}. For a fixed $x \in\IR^d$, the existence and uniqueness of solutions to the backward equation \eqref{eq:bwd1} in $\cS^2_\beta \times \cH^2_\beta$ is guaranteed under the assumptions (F0)-(F5) together with the compatibility criterion from Theorem \ref{theo:DelongImkeller_thm2.1} on the terminal time and the Lipschitz constant $L=K\alpha$, i.e.
\begin{align*}
\big(8T + \frac1\beta \big) L \int_{-T}^0 e^{-\beta s} \rho(\uds) \max\{1,T\} < 1,\ \text{ for }\rho \in \{ \ay,\az\}.
\end{align*}
To extend the result to $\cS^p_\beta \times \cH^p_\beta$ for $p >2$, one only needs to replace the condition above by the compatibility condition from Theorem \ref{theo:picard},
\begin{align*} 
2^{p/2-1}C_p \Big(L T \int_{-T}^0 e^{-\beta s} \rho(\ud s) \Big)^{p/2} \max\{1,T^{p/2}\} < 1,\ \text{ for }\rho \in  \{\ay,\az \}.
\end{align*}
Throughout this section, given $p \geq 2$, we will assume that for every $x \in \IR^d$, the FBSDE \eqref{eq:fwd1}-\eqref{eq:bwd1} admits a unique solution $(X^x,Y^x,Z^x) \in \cS^q_\beta(\IR^d) \times \cS^p_\beta(\IR^m) \times \cH^p_\beta(\IR^{m \times d})$ for all $q\geq 2$.

\subsection{G\^ateaux and Norm differentiability}

In this section we investigate  the variational differentiability of the solution $(X^x,Y^x,Z^x)$ of the time delayed FBSDE \eqref{eq:fwd1}-\eqref{eq:bwd1} with respect to the Euclidean parameter $x\in\IR^d$, i.e. with respect to the initial condition of the forward diffusion. By a well known result (see e.g. \cite{Protter}), (F3) implies that the forward component $X^x$ is differentiable with respect to the parameter $x\in\IR^d$. It is natural to pose the question whether this smoothness is carried over to $(Y^x,Z^x)$ in the setting of FBSDE with time delayed generators.
In all this section we fix $h$ an element of $\IR^{d} \setminus \{0\}$. Our goal is to show that the variational equations of \eqref{eq:fwd1}-\eqref{eq:bwd1} are given by 
\begin{align}
\label{eq:nabla_X}
\nabla X^x_t h&= h + \int_0^t \nabla b(s,X^x_s) \nabla X^x_s h \; \uds + \int_0^t \nabla \sigma(s,X^x_s) \nabla X_s h \; \udws,\\
\label{eq:nabla_Y}
\nabla Y^x_t h &= \nabla g(X^x_T) \nabla X^x_T h \; - \int_t^T \nabla Z^x_s h \; \udws 
+ \int_t^T 
\big\langle (\nabla f)\big(s, \Theta^x(s)\big)
, (\nabla \Theta^x h)(s) \big\rangle \uds,
\end{align}
where the notation $\nabla X^x$ (respectively $\nabla Y^x$ and $\nabla Z^x$) denote the G\^ateaux derivatives of $X^x$ (respectively $Y^x$ and $Z^x$) in the direction $h$ and $(\nabla\Theta^x h)(t)$ is to be understood in the same fashion as in \eqref{eq:theta}, i.e. 
\begin{align}
\label{eq:nablatheta}
(\nabla\Theta^x h)(t)
&=\big((\nabla X^x h \cdot \ax)(t),(\nabla Y^x h \cdot \ay)(t),(\nabla Z^x h \cdot \az)(t)\big), \quad t\in[0,T].
\end{align}
Note that (F3) implies that \eqref{eq:nabla_X} admits a unique solution in $\cS^p_\beta$ for every $\beta  \geq 0$ and $p \geq 2$. Let $(X,Y,Z)$ and $\nabla X h$ solve \eqref{eq:fwd1}-\eqref{eq:bwd1} and \eqref{eq:nabla_X} respectively and let $\Theta^x$ be as defined by \eqref{eq:theta}. Now consider the BSDE with the linear time delayed generator for $t\in[0,T]$
\begin{align}
\label{eq:nabla_YBis}
P_t h &= \nabla g(X^x_T) \nabla X_T^x h - \int_t^T Q_s h \; \udws
+\int_t^T \widehat{F}\big(s,(P h \cdot \ay)(s),(Q h \cdot \az)(s)\big)\uds ,
\end{align}
where $\widehat{F}:\Omega \times [0,T] \times \IR^m\times \IR^{m \times d}\to \IR^m$, 
$\widehat{F}(t,p,q)=\langle (\nabla f)\big(t, \Theta^x(t)\big) , \big((\nabla X^x h \cdot \ax)(t),p,q\big) \rangle$.

The next corollary states, using Theorem \ref{theo:DelongImkeller_thm2.1} and Proposition \ref{prop:apriori_p2}, a result concerning the existence and uniqueness of solution to \eqref{eq:nabla_YBis}. This solution process will then serve as the natural candidate (in some sense) for $\nabla_x Y^xh$ and $\nabla_x Z^x h$, solution to \eqref{eq:nabla_Y}.
\begin{corollary}
\label{cor:nabla_Y_exists}
Let $p \geq 2$, $h\in \IR^{d} \setminus \{0\}$ and $\beta>0$. Assume that (F0)-(F5) are satisfied and let $L>0$ be as in (F2'). If $p>2$ assume that $T$, $K$, $\alpha$ are chosen like in Proposition \ref{lemma:apriori} and satisfy in addition 
\begin{align}
\label{section 3 compatibility condition}
2^{p/2-1}C_p \Big(L T \int_{-T}^0 e^{-\beta s} \rho(\ud s) \Big)^{p/2} \max\{1,T^{p/2}\} < 1,\  \text{ for }\rho\in\{\ay,\az\},
\end{align}
If $p=2$ assume $T$, $K$, $\alpha$ are chosen such that the conditions of Theorem \ref{theo:DelongImkeller_thm2.1} and of Proposition \ref{prop:apriori_p2} hold. Then for every fixed $x$ in $\mathbb{R}^d$, BSDE \eqref{eq:bwd1} has a unique solution $(Y,Z) \in \cS^p_\beta \times \cH^p_\beta$ and 
BSDE \eqref{eq:nabla_YBis} has a unique solution $(P h,Q h) \in \cS^p_\beta \times \cH^p_\beta$.
\end{corollary}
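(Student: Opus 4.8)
The plan is to recognize both \eqref{eq:bwd1} and \eqref{eq:nabla_YBis} as particular instances of the delay BSDE \eqref{eq:bsde1} and then to invoke the existence and uniqueness results already available, namely Theorem \ref{theo:DelongImkeller_thm2.1} together with Proposition \ref{prop:apriori_p2} when $p=2$, and Theorem \ref{theo:picard} when $p>2$. The essential observation is that for a \emph{fixed} $x$ both the forward process $X^x$ and its variation $\nabla X^x h$ are already known, so the $X^x$-dependence can be absorbed into the driver; what then remains is to check that the resulting drivers satisfy (H0)--(H4) with the \emph{same} effective Lipschitz constant and the \emph{same} delay measures $\ay,\az$ as those for which the compatibility conditions are imposed. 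This done, the very same conditions guarantee a unique solution.

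First I would settle \eqref{eq:bwd1}. Set $\bar f(\omega,s,y,z) := f\big(s,(X^x\cdot\ax)(s)(\omega),y,z\big)$, which is $\IF$-adapted and, by the gradient bounds in (F2), Lipschitz in $(y,z)$ with constant at most $K$; whence (H2) holds and (H2') holds with $L=K\alpha$. The terminal value $\xi=g(X^x_T)$ lies in $L^p$ since $|\nabla g|$ is bounded and $X^x_T\in L^q$ for all $q\geq 2$, so (H1) holds. For (H3) one estimates $|\bar f(s,0,0)|\leq |f(s,0,0,0)|+\sqrt{K/3}\,|(X^x\cdot\ax)(s)|$ and bounds the $\cH^p_0$-norm of the last term through Lemma \ref{lemma:interchange} by $\|X^x\|_{\cH^p_0}<\infty$; combined with (F4) this yields (H3), while (H4) follows from (F5). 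With (H0)--(H4) in force and the stated compatibility conditions being exactly those of Theorem \ref{theo:DelongImkeller_thm2.1} (for $p=2$) or Theorem \ref{theo:picard} (for $p>2$), we obtain the unique solution $(Y,Z)\in\cS^p_\beta\times\cH^p_\beta$.

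Next I would treat the variational equation \eqref{eq:nabla_YBis}, using the solution $(Y,Z)=(Y^x,Z^x)$ just obtained to freeze the coefficients. Writing out the inner product, the driver becomes
\begin{align*}
\widehat F(s,p,q)=\big\langle \nabla_x f(s,\Theta^x(s)),(\nabla X^x h\cdot\ax)(s)\big\rangle+\big\langle \nabla_y f(s,\Theta^x(s)),p\big\rangle+\big\langle \nabla_z f(s,\Theta^x(s)),q\big\rangle,
\end{align*}
where the coefficient processes $\nabla_x f,\nabla_y f,\nabla_z f$ evaluated along $\Theta^x$ are adapted (by continuity of $\nabla f$ and adaptedness of $\Theta^x$) and bounded by $\sqrt{K/3}$. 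Consequently $\widehat F$ is linear, and hence Lipschitz, in $(p,q)$ with exactly the same effective constant governing \eqref{eq:bwd1}, and its free term obeys $|\widehat F(s,0,0)|\leq \sqrt{K/3}\,|(\nabla X^x h\cdot\ax)(s)|$, whose $\cH^p_0$-integrability follows again from Lemma \ref{lemma:interchange} together with $\nabla X^x h\in\cS^q$ for all $q$, itself a consequence of (F3) applied to \eqref{eq:nabla_X}. The terminal value $\nabla g(X^x_T)\nabla X^x_T h$ lies in $L^p$ by boundedness of $\nabla g$ and integrability of $\nabla X^x_T h$. Thus (H0)--(H4) hold for \eqref{eq:nabla_YBis} with the same Lipschitz constant and the same measures $\ay,\az$, so the identical application of Theorem \ref{theo:DelongImkeller_thm2.1} (for $p=2$) or Theorem \ref{theo:picard} (for $p>2$) yields the unique solution $(Ph,Qh)\in\cS^p_\beta\times\cH^p_\beta$.

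The only point demanding genuine care is the integrability condition (H3) for the two drivers, which is precisely where the coupling with the forward diffusion enters; everything there rests on the $L^p$-regularity for every $p$ of $X^x$ and $\nabla X^x h$ supplied by (F3) and transported through the delay averaging by Lemma \ref{lemma:interchange}. Once that is secured, the crucial structural fact --- that both drivers carry the same Lipschitz constant and the same delay measures as the abstract equation \eqref{eq:bsde1} --- ensures that no smallness requirement beyond those already assumed is needed.
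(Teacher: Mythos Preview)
Your proposal is correct and follows essentially the same approach as the paper: both proofs verify that the drivers of \eqref{eq:bwd1} and \eqref{eq:nabla_YBis} satisfy (H0)--(H4) with the \emph{same} Lipschitz constant $K$ and the \emph{same} delay measures $\ay,\az$, so that the compatibility conditions already assumed suffice to invoke Theorem~\ref{theo:DelongImkeller_thm2.1} (for $p=2$) or Theorem~\ref{theo:picard} (for $p>2$) for both equations. Your write-up is in fact slightly more explicit than the paper's in checking (H3) via Lemma~\ref{lemma:interchange} and in first treating \eqref{eq:bwd1} before freezing its solution inside $\widehat F$, but the substance is identical.
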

\begin{proof}
Given the known properties of $X$ and $\nabla X$ (and hence of $\nabla Xh$) it is easy to see that $\xi=\nabla g(X^x_T) \nabla X^x_T h$ and $\widehat{F}(\cdot,0,0)$ satisfy conditions (H1), (H3) and (H4). We recall Remark \ref{constants Di independent of data} to say that the several compatibility conditions \eqref{section 3 compatibility condition} as well as the conditions in Proposition \ref{lemma:apriori} depend only on the Lipschitz constant $K$ of (F2), the delay measures $\ay$, $\az$, $T$ and the dimension of the equations. 

From the definition of $\widehat{F}$ and using the bounds of the (spatial) derivatives of $f$ assumed in (F2) it is clear that $\widehat{F}$ satisfies a standard Lipschitz condition (in the spatial variables). In particular, take $p,p'\in \IR^m$ and\,\footnote{Or a sequence of $q_i,q_i'\in\IR^m$ with $i\in\{1,\cdots,d\}$ as we saw in page \pageref{matrixtensorfootnote}'s footnote.
} $q,q'\in\IR^{m\times d}$, then via Minkowski's and Cauchy-Schwarz inequalities along with (F2) we have
\begin{align*}
|\widehat{F}(t,p,q)-\widehat{F}(t,p',q')|
& 
\leq \big|\langle (\nabla_y f)\big(t, \Theta^x(t)\big) , (p-p') \rangle\big|+\big|
\langle (\nabla_z f)\big(t, \Theta^x(t)\big) , (q-q') \rangle \big|
\\
&
\leq |(\nabla_y f)|\,|p-p'| +|(\nabla_z f)|\,|q-q'|
\leq \sqrt{K/3}(\,|p-p'| + |q-q'|\,).
\end{align*}
And hence $\widehat{F}$ satisfies exactly the same Lipschitz condition as $f$. 
Furthermore, the delay measures appearing in $\widehat{F}$ are exactly the same ones as those that appear in $f$. We can thus conclude that the Lipschitz constant, the delay measures, terminal time $T$ and dimensions for $f$ and $\widehat{F}$ are the same. Under this corollary's assumptions, the conditions of 
Theorem \ref{theo:picard} are satisfied for both BSDE \eqref{eq:bwd1} and \eqref{eq:nabla_YBis}. The existence of a unique solution $(Y,Z)$ and $(Ph,Qh)$ in $\cS^p_\beta \times \cH^p_\beta$ of \eqref{eq:bwd1} and \eqref{eq:nabla_YBis} (respectively) follows from Theorem \ref{theo:picard} (and Theorem \ref{theo:DelongImkeller_thm2.1}).
\end{proof}
The solution of BSDE \eqref{eq:nabla_YBis} serves now as the natural candidate for the variational derivatives of $(Y,Z)$ solution of \eqref{eq:nabla_Y}. If one shows that $(\nabla Y^xh,\nabla Z^xh)$ exist in some sense then by the uniqueness of the solution of \eqref{eq:nabla_YBis}, the solutions to \eqref{eq:nabla_Y} and \eqref{eq:nabla_YBis} must coincide, i.e. $\big(\nabla Y^x h,\nabla Z^x h\big) = \big(P h,Q h\big)$ holds almost surely. 

For the rest of the section, we assume that all assumptions ensuring the existence and uniqueness of the variational equations \eqref{eq:nabla_X}-\eqref{eq:nabla_Y} are fulfilled, i.e. we assume that the assumptions of Corollary \ref{cor:nabla_Y_exists} hold. In our next result we show the mapping $x\mapsto (Y^x,Z^x)$ is differentiable in an adequate sense.

\begin{proposition}
\label{prop:diff}
Take $p\geq 2$ and assume the conditions of Corollary \eqref{cor:nabla_Y_exists} hold. Then for any $x\in\IR^d$ the solution $(X^x, Y^x, Z^x)$ of the FBSDE \eqref{eq:fwd1}-\eqref{eq:bwd1} is norm-differentiable in the following sense: 
\[
\lim_{\varepsilon \to 0} \left\| \frac{Y^{x+\varepsilon h}-Y^x}{\varepsilon} - \nabla Y^x h \right\|^p_{\cS^p_\beta}=\lim_{\varepsilon \to 0} \left\| \frac{Z^{x+\varepsilon h}-Z^x}{\varepsilon} - \nabla Z^x h\right\|^p_{\cH^p_\beta}=0, \quad \forall h \in \mathbb{R}^d \setminus\{0\},
\]
where $(\nabla Y^x h, \nabla Z^x h)$ is the unique solution of the BSDE
\begin{align*}
\nabla Y^x_t h &= \nabla g(X^x_T) \nabla X_T^x h- \int_t^T \nabla Z_s^x h \; \udws + \int_t^T 
\big\langle (\nabla f)\big(s, \Theta^x(s)\big) , (\nabla\Theta^x h)(s) \big\rangle \uds,
\end{align*}
with $\Theta^x$ and $\nabla\Theta^x$ defined by \eqref{eq:theta} and \eqref{eq:nablatheta} respectively.
\end{proposition}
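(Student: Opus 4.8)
The plan is to reduce the statement to the a priori estimates of Propositions \ref{prop:apriori_p2} and \ref{lemma:apriori} by realizing the difference quotients as solutions of linear delay BSDEs. Throughout I write $\Delta^\varepsilon X := \varepsilon^{-1}(X^{x+\varepsilon h}-X^x)$ and likewise $\Delta^\varepsilon Y$, $\Delta^\varepsilon Z$. As a preliminary step I would record two facts. First, classical SDE theory under (F3) gives that $\Delta^\varepsilon X \to \nabla X^x h$ in $\cS^q_\beta$ for every $q\geq 2$ and that $x\mapsto X^x$ is continuous in these norms. Second, I would establish the continuity of $x\mapsto(Y^x,Z^x)$ in $\cS^p_\beta\times\cH^p_\beta$: applying the a priori estimate to $(Y^{x+\varepsilon h}-Y^x,\,Z^{x+\varepsilon h}-Z^x)$ (same generator $f$, terminal conditions $g(X^{x+\varepsilon h}_T)$ versus $g(X^x_T)$, and forward slot $(X^{x+\varepsilon h}\cdot\ax)(s)$ versus $(X^x\cdot\ax)(s)$), both the terminal and the $\delta_2 f$ terms vanish as $\varepsilon\to0$ by continuity of $g$, $f$ and of the forward flow. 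In particular $\Theta^{x+\varepsilon h}(s)\to\Theta^x(s)$ in norm, hence $\ud\IP\otimes\uds$-a.e.\ along subsequences.

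Next I would write the BSDE for $(\Delta^\varepsilon Y,\Delta^\varepsilon Z)$ by applying the fundamental theorem of calculus to $g$ and $f$. This turns the terminal condition into $\big(\int_0^1\nabla g(X^x_T+\lambda(X^{x+\varepsilon h}_T-X^x_T))\ud\lambda\big)\Delta^\varepsilon X_T$ and the generator into $G^\varepsilon(s)\cdot\big((\Delta^\varepsilon X\cdot\ax)(s),(\Delta^\varepsilon Y\cdot\ay)(s),(\Delta^\varepsilon Z\cdot\az)(s)\big)$, where $G^\varepsilon(s):=\int_0^1(\nabla f)\big(s,\Theta^x(s)+\lambda(\Theta^{x+\varepsilon h}(s)-\Theta^x(s))\big)\ud\lambda$. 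Thus $\Delta^\varepsilon Y$ solves a delay BSDE of the form \eqref{eq:bsde1} with generator $f^\varepsilon(s,y,z):=G^\varepsilon(s)\cdot\big((\Delta^\varepsilon X\cdot\ax)(s),y,z\big)$, which by (F2) is Lipschitz in $(y,z)$ with constant bounded by $\sqrt{K}$ \emph{uniformly in} $\varepsilon$ and satisfies (H0)--(H4). Since $(\nabla Y^x h,\nabla Z^x h)$ solves \eqref{eq:nabla_YBis} with generator $\widehat F$, which shares the same Lipschitz constant, delay measures, $T$ and dimension (Corollary \ref{cor:nabla_Y_exists}), the difference $(U^\varepsilon,V^\varepsilon):=(\Delta^\varepsilon Y-\nabla Y^x h,\,\Delta^\varepsilon Z-\nabla Z^x h)$ is exactly the difference of two delay-BSDE solutions.

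I would then invoke Proposition \ref{prop:apriori_p2} for $p=2$ and Proposition \ref{lemma:apriori} for $p>2$ with $f^1=f^\varepsilon$ and $f^2=\widehat F$. Since the data governing $C_p$ is $\varepsilon$-independent, the constant is uniform in $\varepsilon$ and
\[
\|U^\varepsilon\|^p_{\cS^p_\beta} + \|V^\varepsilon\|^p_{\cH^p_\beta} \leq C_p \Big\{ \IE\big[(e^{\beta T}|U^\varepsilon_T|^2)^{p/2}\big] + \IE\big[\big(\int_0^T e^{\frac{\beta}{2} s}|\delta_2 f^\varepsilon_s| \uds\big)^p\big]\Big\},
\]
where $\delta_2 f^\varepsilon_s:=f^\varepsilon\big(s,(\nabla Y^x h\cdot\ay)(s),(\nabla Z^x h\cdot\az)(s)\big)-\widehat F\big(s,(\nabla Y^x h\cdot\ay)(s),(\nabla Z^x h\cdot\az)(s)\big)$. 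The terminal term tends to $0$ in $L^p$ because $U^\varepsilon_T=\big(\int_0^1\nabla g\,\ud\lambda\big)\Delta^\varepsilon X_T-\nabla g(X^x_T)\nabla X^x_T h\to0$, using $\Delta^\varepsilon X_T\to\nabla X^x_T h$, the boundedness and continuity of $\nabla g$ from (F1), and dominated convergence.

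The main obstacle is the generator term. I would split $\delta_2 f^\varepsilon_s$ into $\big[G^\varepsilon(s)-(\nabla f)(s,\Theta^x(s))\big]$ applied to $\big((\nabla X^x h\cdot\ax)(s),(\nabla Y^x h\cdot\ay)(s),(\nabla Z^x h\cdot\az)(s)\big)$, plus $G^\varepsilon(s)$ applied to $\big(((\Delta^\varepsilon X-\nabla X^x h)\cdot\ax)(s),0,0\big)$. The second summand is controlled directly: by $|G^\varepsilon|\leq\sqrt{K/3}$ and a change-of-order argument in the spirit of Lemma \ref{lemma:interchange}, its contribution is dominated by $\|\Delta^\varepsilon X-\nabla X^x h\|$ in a suitable norm, which vanishes by the forward convergence. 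For the first summand I would use that $G^\varepsilon(s)\to(\nabla f)(s,\Theta^x(s))$ $\ud\IP\otimes\uds$-a.e.\ (along subsequences), by continuity of $\nabla f$ together with the preliminary continuity $\Theta^{x+\varepsilon h}\to\Theta^x$; since $|G^\varepsilon-\nabla f|\leq 2\sqrt{K/3}$ and the fixed processes $\nabla X^x h,\nabla Y^x h,\nabla Z^x h$ lie in the appropriate $L^p$ spaces by Corollary \ref{cor:nabla_Y_exists}, an $\varepsilon$-independent $p$-integrable dominating function is available and dominated convergence yields $\IE\big[\big(\int_0^T e^{\frac{\beta}{2} s}|\delta_2 f^\varepsilon_s|\uds\big)^p\big]\to0$. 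Passing through subsequences (a.e.\ convergence only holds along subsequences) and using that the limit is forced to be the same for every subsequence, I conclude $\|U^\varepsilon\|_{\cS^p_\beta},\|V^\varepsilon\|_{\cH^p_\beta}\to0$, which is precisely the asserted norm-differentiability. The delicate point throughout is guaranteeing that the dominating function and the constant $C_p$ are genuinely independent of $\varepsilon$, so that the a.e.\ convergence lifts to the stated $L^p$-limits.
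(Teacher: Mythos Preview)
Your proposal is correct and follows essentially the same route as the paper's proof: linearize via the fundamental theorem of calculus, recognize the difference as the solution of a linear delay BSDE with the same Lipschitz constant, delay measures and horizon, and apply the a priori estimates to reduce the claim to the vanishing of a terminal term and a generator residual. The only cosmetic differences are that the paper uses a componentwise interpolation (separate processes $A_{s,\mathcal{X}},A_{s,\mathcal{Y}},A_{s,\mathcal{Z}}$ interpolating one slot at a time) where you use a single simultaneous interpolation $G^\varepsilon$, the paper writes the BSDE for $(U,V)$ directly and applies Corollary \ref{coro:momentestimates} rather than framing it as a comparison of two drivers $f^\varepsilon$ versus $\widehat F$, and the paper invokes dominated convergence under convergence in probability (citing \cite{Shiryaev}) instead of your subsequence argument; none of these affects the substance.
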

\begin{proof}
Let $x \in \IR^d$,
$t\in[0,T]$ and $\varepsilon>0$. We use the following notations
\begin{align}
\label{eq:A}
A_{s,\mathcal{X}}&:=\int_0^1 \nabla_x f\Big(s,(X^{x}\cdot \ax)(s)+\theta \big((X^{x+\varepsilon h}-X^{x})\cdot \ax\big)(s),
\nonumber\\
&\hspace{3.0cm}(Y^{x+\varepsilon h}\cdot \ay)(s),(Z^{x+\varepsilon h}\cdot \az)(s)\Big) \ud \theta,
\nonumber\\
A_{s,\mathcal{Y}}&:=\int_0^1 \nabla_y f\Big(s,(X^{x}\cdot \ax)(s),
\nonumber\\
&\hspace{3.0cm}(Y^{x}\cdot \ay)(s)+\theta \big((Y^{x+\varepsilon h}-Y^{x})\cdot \ay\big)(s),(Z^{x+\varepsilon h}\cdot \az)(s)\Big) \ud \theta,
\\
A_{s,\mathcal{Z}}&:=\int_0^1 \nabla_z f\Big(s,(X^{x}\cdot \ax)(s),
\nonumber\\
&\hspace{3cm}(Y^{x}\cdot \ay)(s),(Z^{x}\cdot \az)(s)+\theta \big((Z^{x+\varepsilon h}-Z^{x})\cdot \az\big)(s)\Big) \ud \theta.
\nonumber
\end{align}
We remark that although the processes $A$ depends on $\varepsilon$ and $x$, for the sake of notational simplicity we do not write this dependence explicitly. We remark also that by assumption (F2) the processes $\vert A_{\cdot,\ast} \vert \leq \sqrt{K/3}$ for $\ast=\mathcal{X}, \mathcal{Y}, \mathcal{Z}$, in particular they are uniformly bounded in $x$ and $\varepsilon$.

We denote by $(P h,Q h$ the solution of the BSDE \eqref{eq:nabla_YBis} which coincides with $(\nabla Y h,\nabla Z h)$. We define the auxiliary processes
$\xi:=\big(g(X_T^{x+\varepsilon h})-g(X_T^x)\big)/{\varepsilon}-\nabla g(X_T^x) \nabla X_T^x h$, 
\begin{align}
\label{definition-of-X-tilde}
&U:=\frac{Y^{x+\varepsilon h}-Y^x}{\varepsilon}-P h,\ \ V:=\frac{Z^{x+\varepsilon h}-Z^x}{\varepsilon}-Q h,\ \text{ and }\ \tilde{X}:=\frac{X^{x+\varepsilon h}-X^x}{\varepsilon}-\nabla X^x h.
\end{align} 
Notice that from Assumption (F2) and the standard SDE theory we have that $\tilde{X}$ is well defined and $\tilde{X}\in\cS^p_\beta$ for any $b\geq 0$ and $p\geq 2$. We now claim and prove that 
\[
\lim_{\varepsilon \to 0} \| U \|^p_{\cS^p_\beta}=\lim_{\varepsilon \to 0} \| V \|^p_{\cH^p_\beta}=0,\quad \text{for arbitrary } x \in \IR^d.
\]
This result obviously proves the norm differentiability. To start with, we have 
\begin{align*}
U_t&=\xi+\int_t^T \frac{f(s,\Theta^{x+\varepsilon h}(s))-f(s,\Theta^{x}(s))}{\varepsilon} \uds \\
&- \int_t^T \big\langle (\nabla f)\big(s, \Theta^x(s)\big)
, \big( (\nabla X^x h \cdot \ax)(s), (P h \cdot \ay)(s), (Q h \cdot \az)(s)\big) \big\rangle \uds - \int_t^T V_s \udws.
\end{align*}
By construction the above equation is well defined, since for any $x$ and $\varepsilon$ all the involved processes are known a priori to exist and have the convenient integrability properties. The format of the above dynamics is still not convenient for our computations so we transform it into the more familiar dynamics of a delay BSDE. Using the identity $\phi(x)-\phi(y)=(x-y) \int_0^1 \nabla\phi(y+\theta(x-y)) \ud\theta$ for a continuously differentiable function $\phi:\mathbb{R}^a \to \mathbb{R}^b$ ($a$ and $b$ being arbitrary non-zero integers), the previous equation leads to
\begin{align}
\label{eq:diffBSDEtemp}
\nonumber
U_t&=\xi+\frac{1}{\varepsilon}\int_t^T\big[\, A_{s,\mathcal{X}} \big((X^{x+\varepsilon h}-X^x)\cdot \ax\big)(s)
\\
\nonumber
& \hspace{3cm} 
+ A_{s,\mathcal{Y}} \big((Y^{x+\varepsilon h}-Y^x)\cdot \ay\big)(s)+A_{s,\mathcal{Z}} \big((Z^{x+\varepsilon h}-Z^x\big)\cdot \az\big)(s) \,\big]\uds
\\
\nonumber
&
- \int_t^T \big\langle (\nabla f)\big(s, \Theta^x(s)\big)
, \big( (\nabla X^x h \cdot \ax)(s), (P h \cdot \ay)(s), (Q h \cdot \az)(s)\big) \big\rangle \uds - \int_t^T V_s \ud W_s 
\\ 
&
=\xi+\int_t^T \Phi\Big(s,(\tilde{X} \cdot \ax)(s),(U \cdot \ay)(s),(V \cdot \az)(s) \Big) \uds - \int_t^T V_s \ud W_s,
\end{align}
with $\tilde{X}$ given in \eqref{definition-of-X-tilde}, $ \Phi(t,x,y,z):=R_t+x A_{t,\mathcal{X}} + y A_{t,\mathcal{Y}} + z A_{t,\mathcal{Z}}$
and
\begin{align*}
R_t&:= -\big\langle (\nabla f)\big(t, \Theta^x(t)\big)
, \big( (\nabla X^x h \cdot \ax)(t), (P h \cdot \ay)(t), (Q h \cdot \az)(t)\big) \big\rangle\\
&\qquad \qquad + A_{t,\mathcal{X}} (\nabla X^x \cdot \ax)(t) + A_{t,\mathcal{Y}} (P h \cdot \ay)(t) + A_{t,\mathcal{Z}} (Q h \cdot \az)(t).
\end{align*} 
We now aim at using the results of Section 2 on the family (index by $\varepsilon$) of auxiliary delay BSDEs \eqref{eq:diffBSDEtemp}. In view of the uniform boundedness of the processes $A$ and the linearity of the driver $\Phi$, we can repeat the arguments used in the proof of Corollary \ref{cor:nabla_Y_exists} to conclude that under the assumptions of this proposition the data of BSDE \eqref{eq:diffBSDEtemp} (Lipschitz constant, delay measure and terminal time) satisfies uniformly in $\varepsilon$ the assumptions of Corollary \ref{cor:nabla_Y_exists} as well.

Applying the a priori estimate of Proposition \ref{lemma:apriori} or the moment estimate from Corollary \ref{coro:momentestimates} to the BSDE \eqref{eq:diffBSDEtemp} and taking into account that $\Phi$ satisfies (F2), we get 
\begin{align}
\label{eq:diffBSDEtemp2}
\| U \|^p_{\cS^p_\beta} +\| V \|^p_{\cH^p_\beta} &\leq C_p \Big\{ \IE \Big[ (e^{\beta T}|\xi|^2)^{p/2} \Big] + \IE \Big[ \big(\int_0^T e^{\beta s} \big|\Phi\big(s,(\tilde{X} \cdot \ax)(s),0,0\big)\big| \uds \big)^p \Big] \Big\} 
\nonumber 
\\
& 
\leq C \Big\{ \IE \Big[ (e^{\beta T}|\xi|^2)^{p/2} \Big] + \|  \tilde{X} \|^2_{\cH^p_\beta} + \IE \Big[ \big( \int_0^T e^{\beta s} |R_s| \uds \big)^p \Big] \Big\},
\end{align} 
for some constant $C>0$ (where we have used that $A_{\cdot,\mathcal{X}}$ is uniformly bounded). We proceed to compute the limit of each term on the right hand side of \eqref{eq:diffBSDEtemp2} as $\varepsilon$ goes to zero. 

\medskip
We first deal with the second term of the right hand side of \eqref{eq:diffBSDEtemp2}. Define 
$$ \hat{\sigma}_t:=\int_0^1 \nabla \sigma\big(t,X_t^{x}+\theta (X_t^{x+\varepsilon h}-X_t^{x})\big) \ud\theta \quad \textrm{ and } \quad \hat{b}_t:=\int_0^1 \nabla b\big(t,X_t^{x}+\theta (X_t^{x+\varepsilon h}-X_t^{x})\big) \ud\theta. $$
Note that $\tilde{X}\in \cS^p$ for any $p\geq 2$ (see \eqref{definition-of-X-tilde}) and solves the linear SDE
\begin{align}
\label{X-tilde-SDE}
 \tilde{X}_t=J_t+\int_0^t [\,\hat{\sigma}_s \tilde{X}_s\,] \ud W_s + \int_0^t\,[ \hat{b}_s \tilde{X}_s\,] \uds,
\end{align}
where $J$ is given by 
\[
J_t:=\int_0^t [\,\nabla X_s^x h (\hat{\sigma}_s-\nabla \sigma(s,X_s^x))\,] \ud W_s + \int_0^t[\, \nabla X_s^x h \big(\hat{b}_s-\nabla b(s,X_s^x)\big) \,]\uds.
\]
Given the known properties of $\nabla X$ and the fact that $\hat{b},\hat{\sigma},\nabla b$, and $\nabla \sigma$ are uniformly bounded we have that $J\in \cS_0^p$ for any $p\geq 2$. Indeed, Doob's inequality leads to
\begin{align*}
&\IE\Big[ \Big( \sup_{t\in [0,T]} \big\vert \int_0^t[\, \nabla X_s^x h\big(\hat{\sigma}_s-\nabla \sigma(s,X_s^x)\big)\,] \ud W_s \big\vert^2 \Big)^{p/2} \Big]
\leq C
\big\|\nabla X^x h \big(\hat{\sigma}-\nabla \sigma(\cdot,X^x)\big)\big\|_{\cH^p}^{p}
<\infty.
\end{align*}
Moreover, note that by Lebesgue's dominated convergence theorem 
\[
\lim_{\varepsilon \to 0}
 \|\nabla X^x h \big(\hat{\sigma}-\nabla \sigma(\cdot,X^x)\big)\|_{\cH^p}^{p}
 =0.\]
Similarly, using Jensen's inequality, the finite variation part of $J$ is an element of $\cS^p_0(\mathbb{R})$ and 
$$\lim_{\varepsilon \to 0} \|J\|_{\cS^p_0}=0.$$ 
Now we derive the following estimate for $\tilde{X}$ in terms of the norm of $J$ 
\begin{equation}
\label{eq:esttildeX}
\|\tilde{X} \|_{\cS^p_\beta}\leq C\, \E[\sup_{t\in [0,T]} |\tilde{X}_t|^p] \leq  C\, \| J \|_{\cS^p_0}
\end{equation} 
which will show that $\lim_{\varepsilon \to 0} \| \tilde{X} \|_{\cS^p_\beta}=0$.
Indeed equation \eqref{X-tilde-SDE} implies that:
\[
\E[\sup_{0\leq r\leq t} |\tilde{X}_r|^p] \leq C\, \IE\Big[ \sup_{0\leq r \leq t} |J_r|^p + \sup_{0\leq r\leq t} \big| \int_0^r [\,\hat{\sigma}_s \tilde{X}_s\,] \ud W_s \big|^p +  \sup_{0 \leq r \leq t} \big| \int_0^r \,[ \hat{b}_s \tilde{X}_s\,] \uds \big|^p \Big].
\]
Applying Burkholder-Davis-Gundy inequality to the second term in the right hand side, we get:
\[
\E[\sup_{0\leq r\leq t} |\tilde{X}_r|^p] \leq C\, \IE\Big[ \sup_{0\leq r \leq t} |J_r|^p + \big| \int_0^t |\hat{\sigma}_s \tilde{X}_s|^2 \uds \big|^{p/2} +  \sup_{0 \leq r \leq t} \big| \int_0^r \,[ \hat{b}_s \tilde{X}_s\,] \uds \big|^p \Big].
\]
Jensen's inequality and the fact that $\hat{\sigma}$ and $\hat{b}$ are bounded imply that:
\[
\E\big[\sup_{0\leq r\leq t} |\tilde{X}_r|^p\big] \leq C \, \E\Big[ \sup_{0\leq r \leq t} |J_r|^p +\int_0^t |\tilde{X}_s|^p \uds \Big]
\]
hence
\[
\E\big[\sup_{0\leq r\leq t} |\tilde{X}_r|^p\big] \leq C\, \Big\{ \E[\sup_{0\leq r \leq t} |J_r|^p] + \int_0^t \E[\sup_{0\leq r \leq s} |\tilde{X}_r|^p] \uds \Big\}.
\]
Gronwall's lemma finally entails estimate \eqref{eq:esttildeX} and  
thus $\lim_{\varepsilon \to 0} \| \tilde{X} \|_{\cS^p_\beta}=0$.
\vspace{0.3cm}

Let us consider the terminal condition term in \eqref{eq:diffBSDEtemp2}. Denoting 
$$
\hat{g}:=\int_0^1 \nabla g\big(X_T^{x}+\theta (X_T^{x+\varepsilon h}-X_T^{x})\big) \ud\theta,
$$
it holds that
\begin{align*}
\IE \Big[ (e^{\beta T}|\xi|^2)^{p/2} \Big]
&
=
e^{\beta T p/2}\big\|\hat{g} \big(\frac{X^{x+\varepsilon h}_T-X^x_T}{\varepsilon}-\nabla X_T^x h \big) + \big(\hat{g}-\nabla g(X_T^x)\big) \nabla X_T^x h\big\|_{L^p}^p
\\
&
\leq C\Big\{
\big\|\frac{X^{x+\varepsilon h}_T-X^x_T}{\varepsilon}-\nabla X_T^x h \big\|_{L^p}^p
+
\big\|\,|\nabla X_T^x h \vert\, \vert \hat{g}-\nabla g(X_T^x)|\,\big\|_{L^p}^p
\\
&
\leq C \Big\{
\|\tilde{X}\|_{\cS^p_0}^p
+
\big\|\,|\nabla X_T^x h \vert\, \vert \hat{g}-\nabla g(X_T^x)|\,\big\|_{L^p}^p\Big\}
\Big\}
 \underset{\varepsilon \to 0}{\longrightarrow} 0,
\end{align*}
where we have used Lebesgue's dominated convergence theorem for the second summand and the estimate obtained above on the norm of $\tilde{X}$ for the first one.

\vspace{0.3cm}
Now, let us consider the last term on the right hand side of \eqref{eq:diffBSDEtemp2}. We have that 
\begin{align*}
\IE \Big[ \Big( \int_0^T e^{\beta s} |R_s| \uds \Big)^p \Big]
&\leq C\, \IE \left[ \Big( \int_0^T e^{\beta s} \left|\left(A_{s,\mathcal{X}}-\nabla_x f\big(s, \Theta^x(s)\big)\right) (\nabla X^x h \cdot \ax)(s)\right| \uds \Big)^p \right]\\
&\qquad+ C\, \IE \left[ \Big( \int_0^T e^{\beta s} \left|\left(A_{s,\mathcal{Y}}-\nabla_y f\big(s, \Theta^x(s)\big)\right) ( P h \cdot \ay)(s)\right| \uds \Big)^p \right]\\
&\qquad+ C\, \IE \left[ \Big( \int_0^T e^{\beta s} \left|\left(A_{s,\mathcal{Z}}-\nabla_z f\big(s, \Theta^x(s)\big)\right) ( Q h \cdot \az)(s)\right| \uds \Big)^p \right].
\end{align*}
Standard arguments yield (note that $\varepsilon>0$ is implicitly contained in $A_{t,\mathcal{X}}$, see \eqref{eq:A})
\begin{align*}
A_{t,\mathcal{X}} \longrightarrow \nabla_x f\big(t, \Theta^x(t)\big) \quad \text{as $\varepsilon \to 0$ in probability, for }
\udt\text{-a.a. } t \in [0,T].
\end{align*}
Moreover, Proposition \ref{lemma:apriori} and the previous calculations show that
\begin{align*}
&\| Y^{x+\varepsilon h} -Y^x \|^p_{\cS^p_\beta} + \| Z^{x+\varepsilon h} -Z^x \|^p_{\cH^p_\beta} 
\\
&\hspace{1cm}
\leq C \, \big\{ 
e^{ \beta T\,p}\|g(X^{x+\varepsilon h}) - g(X^{x}) \|^p_{L^p}
+ 
\|X^{x+\varepsilon h}-X^x \|^p_{\cH^p_\beta} \big\}\underset{\varepsilon \to 0}{\longrightarrow} 0,
\end{align*}
for some positive constant $C$. This implies for $\udt$-a.a. $t\in[0,T]$
\begin{align*}
Y_t^{x+\varepsilon h} \to Y_t^x, \quad Z_t^{x+\varepsilon h} \to Z_t^x, \quad \text{as $\varepsilon\to 0$ in probability.}
\end{align*}
Since $\nabla_y f$, $\nabla_z f$ are continuous, it follows that for $\udt$-a.a. $t\in[0,T]$
\begin{align*}
A_{t,\mathcal{Y}} \longrightarrow \nabla_y f\big(t, \Theta^x(t)\big), \quad \text{as $\varepsilon\to 0$ in probability,}\\
A_{t,\mathcal{Z}} \longrightarrow \nabla_z f\big(t, \Theta^x(t)\big), \quad \text{as $\varepsilon\to 0$ in probability.}
\end{align*} 
Thus, using Lemma \ref{lemma:interchange} and the fact that $P$ and $Q$ are square integrable, Lebesgue's dominated convergence theorem (which also holds, if almost sure convergence is replaced by convergence in probability, \textit{cf.} \cite{Shiryaev}, remark on p. 258) yields $\lim_{\varepsilon \to 0} \IE \big[ \big( \int_0^T e^{\beta s} |R_s| \uds \big)^p \big]=0$. Now  \eqref{eq:diffBSDEtemp2} yields that
\begin{align*}
\lim_{\varepsilon \to 0} \big\{ \| U \|^p_{\cS^p_\beta} +\| V \|^p_{\cH^p_\beta} \big\} = 0,
\end{align*}
which proves the claim.
\end{proof}

\subsection{Strong differentiability}

All previous assumptions on existence and uniqueness remain in force. In this section, we concentrate on the smoothness properties of the paths associated to the processes $(Y^x,Z^x)$. We assume throughout this section that $m=1$, i.e. the delay BSDE are now one-dimensional. A first result is obtained in the following

\begin{proposition}
\label{prop:cont}
Set $m=1$ and under the assumptions of Corollary \ref{cor:nabla_Y_exists} 
 we have for $x,x'\in\IR^d$ 
$$ \IE\big[ \sup_{0\leq t \leq T} |X_t^x-X_t^{x'}|^q  \big] \leq C |x-x'|^{q},\quad \text{for any }\ q\geq 2,$$
and for any $p>2$
$$ \IE\Big[ \sup_{0\leq t \leq T} \big( e^{\beta t} |Y_t^{x}-Y_t^{x'}|^2 \big)^{p/2} \Big] + \IE\Big[ \big( \int_0^T e^{\beta s} |Z_s^{x}-Z_s^{x'}|^2 \uds \big)^{p/2} \Big] \leq C |x-x'|^p.$$
Thus for every $x \in \IR^d$,
\begin{itemize}
\item the mapping $x\mapsto Y^x$ from $\mathbb{R}^d$ to the space of c\`adl\`ag functions equipped with the topology given by the uniform convergence on compacts sets is continuous $\IP$-almost surely,
\item the mapping $x\mapsto Z^x$ is continuous from $\mathbb{R}^d$ to $L^2([0,T])$ $\IP$-almost surely. 
\end{itemize}
In particular, for every $x \in \IR^d$,
\begin{itemize}
\item the mapping $x\mapsto Y_t^x$ from $\mathbb{R}^d$ to $\mathbb{R}$ is continuous for all $t \in [0,T]$, $\IP$-almost surely,
\item the mapping $x\mapsto Z^x_t(\omega)$ is continuous for every $x \in \IR^d$ and $ \udt \otimes \ud \IP$-almost all $(t,\omega)$.
\end{itemize}
\end{proposition}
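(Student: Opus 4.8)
The plan is to prove the two moment inequalities first and then read off the pathwise continuity statements from the Kolmogorov--Chentsov criterion. The forward estimate is classical: the difference $X^x - X^{x'}$ solves a linear SDE started at $x-x'$ whose coefficients inherit the Lipschitz bounds guaranteed by (F3). Applying the Burkholder--Davis--Gundy inequality to the martingale part, Jensen to the drift part, and then Gronwall's lemma yields $\IE\big[\sup_{0\le t\le T}|X_t^x - X_t^{x'}|^q\big] \le C|x-x'|^q$ for every $q\ge 2$, with a constant $C$ independent of $x,x'$.

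For the backward component I would recast the comparison of $(Y^x,Z^x)$ and $(Y^{x'},Z^{x'})$ into the setting of Proposition \ref{lemma:apriori}. Both pairs solve delay BSDEs driven by the common $f$ but fed with different forward inputs; absorbing the adapted, deterministically delayed term $(X^x\cdot\ax)$ into the driver, I regard them as solutions with the effective generators $\bar f^x(s,y,z):=f\big(s,(X^x\cdot\ax)(s),y,z\big)$ and $\bar f^{x'}$, which are $\IF$-adapted and share the same Lipschitz constant $K$ in $(y,z)$ as $f$. Then the a priori estimate \eqref{eq:apriori_p} applies with terminal difference $\delta Y_T = g(X_T^x)-g(X_T^{x'})$ and
\begin{align*}
\delta_2 f_s
&= f\big(s,(X^x\cdot\ax)(s),(Y^{x'}\cdot\ay)(s),(Z^{x'}\cdot\az)(s)\big) \\
&\quad - f\big(s,(X^{x'}\cdot\ax)(s),(Y^{x'}\cdot\ay)(s),(Z^{x'}\cdot\az)(s)\big),
\end{align*}
so that by (F2) we have $|\delta_2 f_s| \le \sqrt{K/3}\,\big|\big((X^x - X^{x'})\cdot\ax\big)(s)\big|$. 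The terminal term is bounded by $C\,\IE[|X_T^x - X_T^{x'}|^p]$ using $|\nabla g|\le K'$ from (F1), while the generator term is bounded by $C\,\IE\big[\sup_{0\le r\le T}|X_r^x - X_r^{x'}|^p\big]$ after pulling the supremum out of the delay integral (legitimate since $\ax([-T,0))<\infty$ and $\int_0^T e^{\beta s/2}\uds<\infty$, the trivial extension to $[-T,0)$ only dropping terms). Both right-hand sides collapse onto the forward estimate with $q=p$, giving the announced bound $\le C|x-x'|^p$.

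To conclude, I would interpret the two estimates as H\"older-type bounds for the random fields $x\mapsto Y^x$ and $x\mapsto Z^x$ valued in the Banach space $D([0,T])$ of c\`adl\`ag functions with the uniform norm and in $L^2([0,T])$, respectively. Since the bound holds for every $p>2$ with a constant independent of $x,x'$, I would fix $p>d$ and write $|x-x'|^p = |x-x'|^{d+(p-d)}$ with $p-d>0$; this is exactly the exponent demanded by Kolmogorov--Chentsov in dimension $d$, producing $\IP$-a.s.\ continuous modifications of $x\mapsto Y^x$ and $x\mapsto Z^x$ in the respective topologies. The two ``in particular'' assertions then follow: uniform-on-compacts continuity of $x\mapsto Y^x$ forces continuity of $x\mapsto Y_t^x$ at each fixed $t$, while $L^2([0,T])$-continuity of $x\mapsto Z^x$ yields, via Fubini along a countable dense set of $x$'s, continuity of $x\mapsto Z^x_t(\omega)$ for $\udt\otimes\ud\IP$-almost all $(t,\omega)$.

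The step I expect to be the crux is the second one, namely the correct reduction to Proposition \ref{lemma:apriori}: one must check that $\bar f^x$ and $\bar f^{x'}$ genuinely satisfy the hypotheses of Corollary \ref{cor:nabla_Y_exists} \emph{uniformly} in $x,x'$ (so that the compatibility constants $D_1,D_2,D_3$ and $C_p$ do not depend on the initial data), and that the delayed forward term is faithfully controlled by $\sup_{0\le r\le T}|X_r^x-X_r^{x'}|$ despite the extension to $[-T,0)$. Everything downstream---the Kolmogorov argument and the passage to pointwise continuity---is then routine once the exponent $p>d$ is secured.
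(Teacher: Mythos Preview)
Your proposal is correct and follows essentially the same route as the paper: the forward estimate is classical, the backward estimate is obtained by applying the a priori bound of Proposition~\ref{lemma:apriori} with $\delta_2 f_s$ reduced via the Lipschitz property of $f$ in its $x$-slot to the delayed forward difference (the paper uses the change of integration \eqref{eq:tmp_02} where you pull out the supremum, but both work), and the continuity statements follow from Kolmogorov's criterion. Your explicit flagging of the need for $p>d$ and of the uniformity in $x,x'$ of the constants (cf.\ Remark~\ref{constants Di independent of data}) is exactly right and slightly more careful than the paper's own presentation.
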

\begin{proof}
The estimate on the forward process is classical (see \textit{e.g.} \cite[Theorem V.37 Equation (***) p.~309]{Protter}). In this proof, $C>0$ denotes a generic constant which may differ from line to line. We apply the a priori estimate from Proposition \ref{lemma:apriori} and get
\begin{align*}
&\IE\Big[ \sup_{0\leq t \leq T} \big( e^{\beta t} |Y_t^{x}-Y_t^{x'}|^2 \big)^{p/2} \Big] + \IE\Big[ \big( \int_0^T e^{\beta s} |Z_s^{x}-Z_s^{x'}|^2 \uds \big)^{p/2} \Big]\\
& \quad \leq C_p \Big\{ \IE\Big[ \big(e^{\beta T} |g(X_T^x)-g(X_T^{x'})|^2 \big)^{p/2} \Big]\\
&\quad\quad  + \IE\Big[ \big(\int_0^T e^{\frac{\beta}{2} s} |f\big(s,(X^x \cdot \ax)(s),\zeta(s)\big)-f\big(s,(X^{x'} \cdot \ax)(s),\zeta(s)\big)| \uds \big)^{p} \Big] \Big\}\\
& \quad \leq C \Big\{ \IE\Big[ \big(e^{\beta T} |g(X_T^x)-g(X_T^{x'})|^2 \big)^{p/2} \Big]\\
&\quad\quad  + \IE\Big[ \big(\int_0^T e^{\beta  s} |f(s,(X^x \cdot \ax)(s),\zeta(s))-f(s,(X^{x'} \cdot \ax)(s),\zeta(s))|^2 \uds \big)^{p/2} \Big] \Big\},
\end{align*}
with $ \zeta(\cdot):=\big((Y^{x'} \cdot \ay)(\cdot),(Z^{x'} \cdot \az)(\cdot)\big)$.
Using the mean value theorem and the boundedness of $\nabla f$ and $\nabla g$ (i.e. the Lipschitz property of $f$ and $g$), we deduce 
\begin{align*}
&\IE\Big[ \sup_{0\leq t \leq T} \big( e^{\beta t} |Y_t^{x}-Y_t^{x'}|^2 \big)^{p/2} \Big] + \IE\Big[ \big( \int_0^T e^{\beta s} |Z_s^{x}-Z_s^{x'}|^2 \uds \big)^{p/2} \Big]\\
& \quad \leq C \Big\{ \IE\Big[ \big(e^{\beta T} |X_T^x-X_T^{x'}|^2 \big)^{p/2} \Big] + \IE\Big[ \big(\int_0^T e^{\beta s} |((X^x-X^{x'}) \cdot \ax)(s)|^2 \uds \big)^{p/2} \Big] \Big\}
\\
& \quad
\leq C \Big\{ \IE\Big[ \big(e^{\beta T} |X_T^x-X_T^{x'}|^2 \big)^{p/2} \Big] + \IE\Big[ \big(\int_0^T e^{\beta s} |X_s^x-X_s^{x'}|^2 \uds \big)^{p/2} \Big] \Big\}
\\
&\quad\leq C |x-x'|^p,
\end{align*}
where the last two lines follow by applying the change of integration from \eqref{eq:tmp_02} and the first claim of the proposition. The continuity properties of the mappings $x \mapsto Y^x$ and $x \mapsto Z^x$ are now obtained by an application of Kolmogorov's continuity criterion (see for example \cite[IV.7 Corollary 1]{Protter}).
\end{proof}
If the generator exhibits additional regularity, it even turns out that the paths of $x \mapsto Y^x$ are continuously differentiable.

\begin{theorem}
\label{strong-diff-theorem}
Let $\beta>0$ and assume the conditions of Proposition \ref{prop:diff} can be verified for some $\widehat{p}>4$. Assume moreover that all (spatial) second order partial derivatives of $b,\sigma,g$ and $f$ exist, are continuous and uniformly bounded. Then, for any  $(x,\varepsilon), (x',\varepsilon') \in \mathbb{R}^d \times (0,\infty)$, $h\in\IR^d$ and $p\in (2, {\widehat{p}}/{2}]$ it holds that 
\begin{align*}
\IE\Big[\sup_{0\leq t \leq T} \Big(e^{\beta t}
\Big|\frac{Y_t^{x+\varepsilon h}-Y_t^x}{\varepsilon}
-\frac{Y_t^{x'+\varepsilon' h}-Y_t^{x'}}{\varepsilon'}\Big|^2
\Big)^{p/2}\Big] \leq C\, \big(|x-x'|^2 + |\varepsilon-\varepsilon'|^2\big)^{p/2}.
\end{align*}
Thus $\nabla_x Y^x$ belongs to $\cH_\beta^{\widehat{p}}$ and the mapping $x\mapsto Y^x_t(\omega)$ is continuously differentiable for all $t\in[0,T]$, $\IP$-almost surely.
\end{theorem}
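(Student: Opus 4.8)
The plan is to mirror the strategy of Proposition \ref{prop:diff}, but now comparing the difference quotients taken at two parameter points $(x,\varepsilon)$ and $(x',\varepsilon')$ rather than comparing a single difference quotient to its limit. For $(x,\varepsilon)\in\IR^d\times(0,\infty)$ set
\begin{align*}
U^{x,\varepsilon}_t:=\frac{Y^{x+\varepsilon h}_t-Y^x_t}{\varepsilon},\quad V^{x,\varepsilon}_t:=\frac{Z^{x+\varepsilon h}_t-Z^x_t}{\varepsilon},\quad \widetilde{X}^{x,\varepsilon}_t:=\frac{X^{x+\varepsilon h}_t-X^x_t}{\varepsilon}.
\end{align*}
Exactly as in the derivation of \eqref{eq:diffBSDEtemp}, the fundamental theorem of calculus shows that $(U^{x,\varepsilon},V^{x,\varepsilon})$ solves a linear delay BSDE with terminal condition $\xi^{x,\varepsilon}:=\widehat{g}^{x,\varepsilon}\,\widetilde{X}^{x,\varepsilon}_T$, where $\widehat{g}^{x,\varepsilon}:=\int_0^1\nabla g\big(X^x_T+\theta(X^{x+\varepsilon h}_T-X^x_T)\big)\ud\theta$, and with a driver of the form $A^{x,\varepsilon}_{s,\mathcal{X}}(\widetilde{X}^{x,\varepsilon}\cdot\ax)(s)+A^{x,\varepsilon}_{s,\mathcal{Y}}(U^{x,\varepsilon}\cdot\ay)(s)+A^{x,\varepsilon}_{s,\mathcal{Z}}(V^{x,\varepsilon}\cdot\az)(s)$, the coefficients $A^{x,\varepsilon}_{s,\ast}$ being the $\theta$-averaged gradients of $f$ as in \eqref{eq:A}. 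By (F2) these drivers are uniformly (in $x,\varepsilon$) Lipschitz and satisfy (F2'), so Proposition \ref{lemma:apriori} is available for the pair of BSDEs indexed by $(x,\varepsilon)$ and $(x',\varepsilon')$.

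Next I would apply the a priori estimate \eqref{eq:apriori_p} to the difference $(U^{x,\varepsilon}-U^{x',\varepsilon'},V^{x,\varepsilon}-V^{x',\varepsilon'})$. This bounds the left-hand side of the asserted inequality by
\begin{align*}
C_p\Big\{\IE\big[(e^{\beta T}|\xi^{x,\varepsilon}-\xi^{x',\varepsilon'}|^2)^{p/2}\big]+\IE\Big[\Big(\int_0^T e^{\frac{\beta}{2}s}|\delta_2 f_s|\uds\Big)^p\Big]\Big\},
\end{align*}
where $\delta_2 f_s$ is the difference of the two linear drivers evaluated along the $(x',\varepsilon')$-solution; explicitly it collects the terms $(A^{x,\varepsilon}_{s,\mathcal{X}}\widetilde{X}^{x,\varepsilon}-A^{x',\varepsilon'}_{s,\mathcal{X}}\widetilde{X}^{x',\varepsilon'})\cdot\ax$, $(A^{x,\varepsilon}_{s,\mathcal{Y}}-A^{x',\varepsilon'}_{s,\mathcal{Y}})(U^{x',\varepsilon'}\cdot\ay)(s)$ and $(A^{x,\varepsilon}_{s,\mathcal{Z}}-A^{x',\varepsilon'}_{s,\mathcal{Z}})(V^{x',\varepsilon'}\cdot\az)(s)$. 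It then remains to show that both bracketed quantities are $O\big((|x-x'|^2+|\varepsilon-\varepsilon'|^2)^{p/2}\big)$.

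The estimates on these ingredients are where the extra second-order regularity and the restriction $p\le\widehat{p}/2$ enter. First, since $b,\sigma$ are now $C^2$ with bounded derivatives, the classical theory of SDE flows gives that the forward difference quotients are themselves Lipschitz in the parameter, $\|\widetilde{X}^{x,\varepsilon}-\widetilde{X}^{x',\varepsilon'}\|_{\cS^q_0}\le C(|x-x'|^2+|\varepsilon-\varepsilon'|^2)^{1/2}$ for every $q\ge2$, together with uniform-in-$(x,\varepsilon)$ bounds on all their moments. Second, boundedness of $\nabla^2 g$ and $\nabla^2 f$ makes $\widehat{g}^{\cdot}$ and the coefficients $A^{\cdot}_{s,\ast}$ Lipschitz in their arguments, so Proposition \ref{prop:cont} (Lipschitz dependence of $X^x,Y^x,Z^x$ on $x$, valid at level $\widehat{p}$) yields $\|A^{x,\varepsilon}_{s,\ast}-A^{x',\varepsilon'}_{s,\ast}\|$ of order $|x-x'|+|\varepsilon-\varepsilon'|$ in every $L^{q}$ with $q\le\widehat{p}$; the same proposition also furnishes the uniform bound $\|U^{x',\varepsilon'}\|_{\cS^{\widehat{p}}_\beta}+\|V^{x',\varepsilon'}\|_{\cH^{\widehat{p}}_\beta}\le C$. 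The terminal term is handled by splitting $\xi^{x,\varepsilon}-\xi^{x',\varepsilon'}=\widehat{g}^{x,\varepsilon}(\widetilde{X}^{x,\varepsilon}_T-\widetilde{X}^{x',\varepsilon'}_T)+(\widehat{g}^{x,\varepsilon}-\widehat{g}^{x',\varepsilon'})\widetilde{X}^{x',\varepsilon'}_T$, both summands being purely forward quantities with all moments finite. For the generator term I would, in each product (coefficient difference)$\times$(solution process), apply Cauchy--Schwarz in time and then Hölder in $\omega$ with conjugate exponents $\tfrac{\widehat{p}}{\widehat{p}-p}$ and $\tfrac{\widehat{p}}{p}$: the solution factor is kept in $\cH^{\widehat{p}}_\beta$, while the coefficient-difference factor is estimated in $L^{q_1}$ with $q_1=\tfrac{p\widehat{p}}{\widehat{p}-p}$. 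The assumption $p\le\widehat{p}/2$ is precisely what guarantees $q_1\le\widehat{p}$, so that the available moment bounds suffice, and each term comes out of order $(|x-x'|^2+|\varepsilon-\varepsilon'|^2)^{p/2}$. This exponent bookkeeping in $\delta_2 f$ is the main obstacle: it forces the coupling $p\le\widehat{p}/2$ and is the only place where the second-order smoothness of the data is genuinely needed.

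Finally, with the displayed moment estimate established for the two-parameter field $(x,\varepsilon)\mapsto U^{x,\varepsilon}$, I would invoke Kolmogorov's continuity criterion (taking $p$ close to $\widehat{p}/2$, large enough to exceed the dimension of the parameter domain) to produce a modification that is jointly continuous in $(x,\varepsilon)$ and extends continuously to $\varepsilon=0$; by Proposition \ref{prop:diff} the boundary value is $\nabla Y^x h$, so $x\mapsto\nabla Y^x_t h$ is continuous for every $t$, $\IP$-a.s., which together with the linearity of $h\mapsto\nabla Y^x h$ yields the continuous differentiability of $x\mapsto Y^x_t(\omega)$. The membership $\nabla_x Y^x\in\cH^{\widehat{p}}_\beta$ then follows by passing the uniform $\cH^{\widehat{p}}_\beta$-bounds on the difference quotients to the limit.
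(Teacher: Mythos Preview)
Your proposal is correct and follows essentially the same route as the paper: write the linear delay BSDE for the difference quotients, apply the a priori estimate of Proposition~\ref{lemma:apriori} to $U^{x,\varepsilon}-U^{x',\varepsilon'}$, then estimate the terminal and $\delta_2 f$ terms via second-order regularity, Proposition~\ref{prop:cont}, and the uniform $\cH^{\widehat p}_\beta$-bounds on $(U^{x',\varepsilon'},V^{x',\varepsilon'})$, before finishing with Kolmogorov. The only cosmetic difference is that the paper uses Cauchy--Schwarz twice (so both factors land in $\cH^{2p}_\beta$, forcing $2p\le\widehat p$), whereas you use a general H\"older split with exponents $\widehat p/p$ and $\widehat p/(\widehat p-p)$; the resulting constraint $q_1=\tfrac{p\widehat p}{\widehat p-p}\le\widehat p$ is exactly $p\le\widehat p/2$, so the two are equivalent.
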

It is known that the existence of the partial derivatives (or even all of the directional derivatives) of a function does not guarantee that the function is differentiable at a point. But it is if all the partial derivatives of the function exist and are continuous in a neighborhood of the point, then the function must be differentiable at that point and is in fact of class $C^1$.

Under the assumption that $m=1$ and the subsequent corollary of the Theorem in the previous section, we know that the all (spatial) partial derivatives of $Y^x$ exist. The main result of Theorem \ref{strong-diff-theorem} is the continuity of those partial derivatives.
\begin{proof}
As in the previous proof, $C>0$ denotes a generic constant which can differ from line to line. Let $p>2$, $t\in[0,T]$ and $h\in \mathbb{R}^d\setminus\{0\}$. 
For $(x,\varepsilon) \in \mathbb{R}^d \times (0,\infty)$ let $U^{x,\varepsilon} := \frac{Y^{x+\varepsilon h} - Y^x}{\varepsilon}$, $V^{x,\varepsilon} := \frac{Z^{x+\varepsilon h} - Z^x}{\varepsilon}$, $\xi^{x,\varepsilon} := \frac{g(X_T^{x+\varepsilon h}) - g(X_T^x)}{\varepsilon}$ and $\tilde{X}^{x,\varepsilon} := \frac{X^{x+\varepsilon h}-X^x}{\varepsilon}$. Using the notation from the proof of Proposition \ref{prop:diff}, the pair $(U^{x,\varepsilon},V^{x,\varepsilon})$ satisfies the BSDE
$$ U_t^{x,\varepsilon}=\xi^{x,\varepsilon}+\int_t^T \Phi(s,\zeta^{x,\varepsilon}(s)) \uds - \int_t^T V_s^{x,\varepsilon}\udws,$$
with
$\zeta^{x,\varepsilon}(t):=\big((U^{x,\varepsilon} \cdot \ay)(t),(V^{x,\varepsilon} \cdot \az)(t)\big)$
and $\Phi(t,y,z):=(\tilde{X}^{x,\varepsilon} \cdot \ax)(t) A_{t,\mathcal{X}}^{x,\varepsilon} + y A_{t,\mathcal{Y}}^{x,\varepsilon} + z A_{t,\mathcal{Z}}^{x,\varepsilon}$. 
Note that the terms $A_{\cdot,\ast}^{x,\varepsilon}$ with $\ast = \mathcal{X},\mathcal{Y}, \mathcal{Z}$ are given by \eqref{eq:A}.  

For whatever choice of $(x,\varepsilon)$ we emphasize that the arguments used in the proof of Corollary \ref{cor:nabla_Y_exists} and Proposition \ref{prop:diff} hold true for the above auxiliary BSDE in what the applicability of the a priori estimate of Proposition \ref{lemma:apriori} is concerned.

Let another pair $(x',\varepsilon') \in \IR^d\times(0,\infty)$ be given. Applying Proposition \ref{lemma:apriori} yields
\begin{align*}
\| U^{x,\varepsilon}-U^{x',\varepsilon'} \|_{\cS^p_\beta}^p
&
\leq C_p \Big\{ \IE\Big[ \big(e^{\beta T} |\xi^{x,\varepsilon}-\xi^{x',\varepsilon'}|^2 \big)^{p/2} \Big] + \IE\Big[ \big( \int_0^T e^{\frac{\beta}{2} s} |\delta_2 \Phi(s)| \uds \big)^p \Big] \Big\},
\end{align*}
with 
\begin{align*}
&\delta_2 \Phi(t):= (\tilde{X}^{x,\varepsilon} \cdot \ax)(t) A_{t,\mathcal{X}}^{x,\varepsilon}-(\tilde{X}^{x',\varepsilon'} \cdot \ax)(t) A_{t,\mathcal{X}}^{x',\varepsilon'}
\\
&\hspace{2cm}
+(U^{x',\varepsilon'} \cdot \ay)(t) (A_{t,\mathcal{Y}}^{x,\varepsilon}-A_{t,\mathcal{Y}}^{x',\varepsilon'})+(V^{x',\varepsilon'} \cdot  \az)(t)(A_{t,\mathcal{Z}}^{x,\varepsilon}-A_{t,\mathcal{Z}}^{x',\varepsilon'}).
\end{align*}
Using the hypotheses on $f$ (\textit{i.e.} all partial derivatives up to order two are bounded), we find
\begin{align*}
|\delta_2 \Phi(t)|
&
\leq C \Big\{ |((\tilde{X}^{x,\varepsilon}-\tilde{X}^{x',\varepsilon'}) \cdot \ax)(t)| |A_{t,\mathcal{X}}^{x,\varepsilon}| + |(\tilde{X}^{x',\varepsilon'} \cdot \ax)(t)| |A_{t,\mathcal{X}}^{x,\varepsilon}-A_{t,\mathcal{X}}^{x',\varepsilon'}|
\\
&\quad 
+ |(U^{x',\varepsilon'} \cdot \ay)(s)||A_{t,\mathcal{Y}}^{x,\varepsilon}-A_{t,\mathcal{Y}}^{x',\varepsilon'}|+|(V^{x',\varepsilon'} \cdot \az)(t)| |A_{t,\mathcal{Z}}^{x,\varepsilon}-A_{t,\mathcal{Z}}^{x',\varepsilon'}|\Big\}.
\end{align*}
As a consequence 
\begin{align*}
&
\| U^{x,\varepsilon}-U^{x',\varepsilon'} \|_{\cS^p_\beta}^p
\\
&
\quad 
\leq C \Big\{ 
\|\xi^{x,\varepsilon}-\xi^{x',\varepsilon'}\|_{L^p}^{p}
+
\IE\Big[ \big( \int_0^T e^{\frac{\beta}{2} s} |((\tilde{X}^{x,\varepsilon}-\tilde{X}^{x',\varepsilon'}) \cdot \ax)(s)| |A_{s,\mathcal{X}}^{x,\varepsilon}| \uds \big)^p \Big]
\\
&\quad \quad 
+ 
\IE\Big[ \big( \int_0^T e^{\frac{\beta}{2} s} |(\tilde{X}^{x',\varepsilon'} \cdot \ax)(s)| |A_{s,\mathcal{X}}^{x,\varepsilon}-A_{s,\mathcal{X}}^{x',\varepsilon'}| \uds \big)^p \Big]
\\
&\quad \quad 
+ 
\IE\Big[ \big( \int_0^T e^{\frac{\beta}{2} s} |(U^{x',\varepsilon'} \cdot \ay)(s)||A_{s,\mathcal{Y}}^{x,\varepsilon}-A_{s,\mathcal{Y}}^{x',\varepsilon'}| \uds \big)^p \Big] 
\\
&\quad \quad
+ 
\IE\Big[ \big( \int_0^T e^{\frac{\beta}{2} s} |(V^{x',\varepsilon'} \cdot \az)(s)||A_{s,\mathcal{Z}}^{x,\varepsilon}-A_{s,\mathcal{Z}}^{x',\varepsilon'}| \uds \big)^p \Big] \Big\}
\\
&\quad 
\leq C \Big\{ 
\|\xi^{x,\varepsilon}-\xi^{x',\varepsilon'}\|_{L^p}^{p}
+ 
\| \tilde{X}^{x,\varepsilon}-\tilde{X}^{x',\varepsilon'} \|_{\cH_\beta^{2p}}^{p}
\| A_{\cdot,\mathcal{X}}^{x,\varepsilon} \|_{\cH_\beta^{2p}}^{p}
+
\| \tilde{X}^{x',\varepsilon'} \|_{\cH_\beta^{2p}}^{p}
\| A_{\cdot,\mathcal{X}}^{x,\varepsilon} - A_{\cdot,\mathcal{X}}^{x',\varepsilon'} \|_{\cH_\beta^{2p}}^{p}
\\
&
\qquad\qquad 
+ 
\| U^{x',\varepsilon'} \|_{\cH_\beta^{2p}}^{p}
\| A_{\cdot,\mathcal{Y}}^{x,\varepsilon} - A_{\cdot,\mathcal{Y}}^{x',\varepsilon'} \|_{\cH_\beta^{2p}}^{p}
+ 
\| V^{x',\varepsilon'} \|_{\cH_\beta^{2p}}^{p}
\| A_{\cdot,\mathcal{Z}}^{x,\varepsilon} - A_{\cdot,\mathcal{Z}}^{x',\varepsilon'} \|_{\cH_\beta^{2p}}^{p}
\Big\},
\end{align*}
where for each term we used the Cauchy-Schwarz inequality twice, that $e^{\frac{\beta}{2} t} \leq e^{\beta t}$ and \eqref{eq:tmp_02}. 
Since $(U^{x',\varepsilon'},V^{x',\varepsilon'})$ is a solution in $\cS^p_\beta \times \cH^p_\beta$ of a BSDE, it follows from Corollary \ref{coro:momentestimates} that the quantities $\IE\big[ \big( \int_0^T e^{\beta s} |U_s^{x',\varepsilon'}|^2 \uds \big)^p \big]$ and $\IE\big[ \big( \int_0^T e^{\beta s} |V_s^{x',\varepsilon'}|^2 \uds \big)^p \big]$ are finite and uniformly bounded in $\varepsilon'$. By the assumptions on $b$ and $\sigma$, we have
$$\IE\Big[ \big( \int_0^T e^{\beta s} |\tilde{X}_s^{x',\varepsilon'}|^2 \uds \big)^p \Big]^{1/2}<\infty.$$ 
In addition, by the boundedness of $\nabla f$ we have that $|A_{\cdot,\ast}^{x,\varepsilon}|$ and $|A_{\cdot,\ast}^{x',\varepsilon'}|$ are uniformly bounded (in their several parameters) with $\ast = \mathcal{X},\mathcal{Y},\mathcal{Z}$. Thus the estimate reduces to 
\begin{align}
\label{eq:strongdiff2}
\nonumber
\| U^{x,\varepsilon}-U^{x',\varepsilon'} \|_{\cS^p_\beta}^p
&\leq C \Big\{ 
\|\xi^{x,\varepsilon}-\xi^{x',\varepsilon'}\|_{L^p}^{p}
+
\| \tilde{X}^{x,\varepsilon}-\tilde{X}^{x',\varepsilon'} \|_{\cH_\beta^{2p}}^{p}
+
\| A_{\cdot,\mathcal{X}}^{x,\varepsilon} - A_{\cdot,\mathcal{X}}^{x',\varepsilon'} \|_{\cH_\beta^{2p}}^{p}
\nonumber
\\
& \hspace{1cm}
+
\| A_{\cdot,\mathcal{Y}}^{x,\varepsilon} - A_{\cdot,\mathcal{Y}}^{x',\varepsilon'} \|_{\cH_\beta^{2p}}^{p}
+
\| A_{\cdot,\mathcal{Z}}^{x,\varepsilon} - A_{\cdot,\mathcal{Z}}^{x',\varepsilon'} \|_{\cH_\beta^{2p}}^{p}
\Big\}.
\end{align}
Using the mean value theorem and the fact that the second order partial derivatives are bounded it holds that
\begin{align*}
&|A_{t,\mathcal{X}}^{x,\varepsilon}-A_{t,\mathcal{X}}^{x',\varepsilon'}| + |A_{t,\mathcal{Y}}^{x,\varepsilon}-A_{t,\mathcal{Y}}^{x',\varepsilon'}| + |A_{t,\mathcal{Z}}^{x,\varepsilon}-A_{t,\mathcal{Z}}^{x',\varepsilon'}|
\\
& \hspace{0.5cm}
\leq C \Big\{ \big(|X^{x+\varepsilon h}-X^{x'+\varepsilon' h}| \cdot \ax\big)(t) + \big(|Y^{x+\varepsilon h}-Y^{x'+\varepsilon' h}| \cdot \ay\big)(t) 
\\
& \hspace{1cm}
+\big(|Z^{x+\varepsilon h}-Z^{x'+\varepsilon' h}| \cdot \az\big)(t)  + \big(|X^{x}-X^{x'}| \cdot \ax\big)(t)
\\
& \hspace{1cm}
 + \big(|Y^{x}-Y^{x'}| \cdot \ay\big)(t) + \big(|Z^{x}-Z^{x'}| \cdot \az\big)(t)\Big\}.
\end{align*}
Plugging the right hand side of this inequality in \eqref{eq:strongdiff2} and using Lemma \ref{lemma:interchange} one gets
\begin{align*}
\| U^{x,\varepsilon}-U^{x',\varepsilon'} \|_{\cS^p_\beta}^p
&
\leq C \Big\{ 
\|\xi^{x,\varepsilon}-\xi^{x',\varepsilon'}\|_{L^p}^{p}
+
\| \tilde{X}^{x,\varepsilon}-\tilde{X}^{x',\varepsilon'} \|_{\cH_\beta^{2p}}^{p}
+
\| X^x-X^{x'} \|_{\cH_\beta^{2p}}^{p}
\\
&\qquad  
+
\| X^{x+\varepsilon h}-X^{x'+\varepsilon' h} \|_{\cH_\beta^{2p}}^{p}
+ 
\| Y^{x+\varepsilon h}-Y^{x'+\varepsilon' h} \|_{\cH_\beta^{2p}}^{p}
\\
&\qquad 
+ 
\| Z^{x+\varepsilon h}-Z^{x'+\varepsilon' h} \|_{\cH_\beta^{2p}}^{p}
+ 
\| Y^x-Y^{x'} \|_{\cH_\beta^{2p}}^{p}
+ 
\| Z^x-Z^{x'} \|_{\cH_\beta^{2p}}^{p}
\Big\}.
\end{align*} 
Since $b$, $\sigma$ and $g$ are twice continuously differentiable with bounded derivatives we have the following estimate 
$$ \IE\big[ \,|\xi^{x,\varepsilon}-\xi^{x',\varepsilon'}|^p \big] \leq C (|x-x'|^2 + |\varepsilon-\varepsilon'|^2)^{p/2},$$
which is proved for example in \cite[Lemma 7.4]{AnkirchnerImkellerDosReis}. This result combined with Proposition \ref{prop:cont} leads to
$$\IE\Big[\sup_{0\leq t \leq T} \big( e^{\beta t} |U_t^{x,\varepsilon}-U_t^{x',\varepsilon'}|^2\big)^{p/2}\Big] \leq C \big(|x-x'|^2 + |\varepsilon-\varepsilon'|^2\big)^{p/2}.$$  
The last claim of the theorem follows using Kolmogorov's continuity criterion (see for example \cite[IV.7 Corollary 1]{Protter}).
\end{proof}

\section{Representation formulas and path regularity}
\label{section:representation}

One of the fundamental results in the setting of FBSDE concerns the relationship between the Malliavin and the variational (classical) derivatives of the solution process: the Malliavin derivative of the solution of the BSDE can be expressed as a product of the BSDE's solution variational derivatives (with respect to the initial parameter of the SDE) and the variational derivatives of the forward diffusion. This relationship is known to hold both in the standard Lipschitz generator setting (see Proposition 5.9 of \cite{97KPQ}) as well as the quadratic generator case (see  e.g. Theorem 2.9 of \cite{ImkellerDosReis}) for classical BSDE without time delayed generators. 
\smallskip

In this section we show that this relationship still holds for decoupled FBSDE with time delayed generators. Such a result is somewhat surprising since it is normally dependent on a Markovian structure for the solution of the BSDE that exists for non-time delayed BSDE and which fails to materialize for time delayed BSDE. Imperative for this relationship to hold is the fact that the forward process $X$ is Markovian along with a good behavior of the terminal condition.
\smallskip

As in the previous section, whenever we consider the delay FBSDE \eqref{eq:fwd1}-\eqref{eq:bwd1}, we assume that all conditions to ensure the existence of a unique solution $(X,Y,Z)$ are in force. Moreover, since for $\beta \geq 0$, all $\beta$-norms are equivalent, in the following we content ourselves with giving results for $\beta=0$. Recall that we assume $m=1$, i.e. the delay BSDE is \emph{not} vector-valued.

\subsection*{Malliavin's differentiability of FBSDE with time delayed generators}

We recall Theorem 4.1 of \cite{DelongImkeller2}, modified to our the FBSDE setting. Theorem 4.1 from \cite{DelongImkeller2} shows that the solutions of time delayed BSDE are Malliavin differentiable, and as a consequence, it can be deduced that the solution of the time delayed FBSDE  \eqref{eq:fwd1}-\eqref{eq:bwd1} is also Malliavin differentiable. Under the condition (F3) on the coefficients of the forward equation \eqref{eq:fwd1}, the Malliavin differentiability of the forward process $X$ is a standard result, see for instance Theorem 2.2.1 in \cite{nualart1995}. We denote the solution to the equations \eqref{eq:fwd1}-\eqref{eq:bwd1} by $(X,Y,Z)$. The next result states the Malliavin differentiability of $(X,Y,Z)$. Using the notation introduced in Section 3, we define for $0\leq u\leq t\leq T$
\begin{align}
\label{eq:malliaviontheta}
\nonumber
(D_u \Theta)(t)
&=\big((D_u X \cdot \ax)(t),(D_u Y\cdot \ay)(t),(D_u Z\cdot \az)(t)\big) 
\\
&=\Big( \int_{-T}^0 D_u X_{t+v} \ax(\ud v), \int_{-T}^0 D_u Y_{t+v} \ay(\ud v), \int_{-T}^0 D_u Z_{t+v} \az(\ud v)  \Big).
\end{align}
We define in the canonical way\footnote{See Section 2.2 of \cite{ImkellerDosReis}, Section 5.2 of \cite{97KPQ} or simply \cite{nualart1995}} the space $\IL_{1,2}$ as the space of progressively measurable processes, $X\in\cH^2$, that are Malliavin differentiable and normed by $\|X\|_{\IL_{1,2}}=\IE[ \int_0^T |X_s|^2 \uds +\int_0^T \int_0^T |D_u X_s|^2 \uds \ud u]^{1/2}$.

\begin{theorem}
\label{malliavindifftheo}
Take $p=2$, $m=1$ and assume the conditions of Corollary \ref{cor:nabla_Y_exists} hold. Then $(X,Y,Z)$ are Malliavin differentiable and their derivatives $(DX,DY,DZ)$ solve uniquely in $\IL_{1,2}\times \IL_{1,2} \times\IL_{1,2}$ the following time delayed FBSDE: 
\begin{align}
\label{eq:DX}
D_u X_t &= \sigma(u,X_u) + \int_u^t \nabla_x b(s,X_s)D_u X_s \uds + \int_u^t \nabla_x \sigma(s,X_s) D_u X_s \udws,\\
\label{eq:malliavin_Y}
D_u Y_t &= \nabla g(X_T)D_u X_T - \int_t^T D_u Z_s \udws + \int_t^T 
\big\langle (\nabla f)\big(s, \Theta(s)\big) , (D_u \Theta)(s) \big\rangle \uds,
\end{align}
for $0 \leq u \leq t \leq T$ (zero otherwise) 
with $\Theta$ and $D\Theta$ given by \eqref{eq:theta} and \eqref{eq:malliaviontheta} respectively. Furthermore, $\{D_t Y_t: t\in[0,T]\}$ is a version of $\{Z_t:t\in[0,T]\}$.
\end{theorem}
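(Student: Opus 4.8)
The plan is to treat the three assertions --- Malliavin differentiability of $(X,Y,Z)$, the identification of $(DX,DY,DZ)$ as the unique solution of \eqref{eq:DX}-\eqref{eq:malliavin_Y}, and the representation $Z_t=D_tY_t$ --- in turn, leaning throughout on the linear-delay-BSDE theory already developed. The Malliavin differentiability of the forward process $X$ together with \eqref{eq:DX} is classical under (F3) (see \cite{nualart1995}); moreover $\nabla g$ bounded gives $g(X_T)\in\ID^{1,2}$ with $D_u g(X_T)=\nabla g(X_T)D_uX_T$. The decisive structural observation is that, since the delay measures $\ax,\ay,\az$ are deterministic and supported on $[-T,0)$, the operator $D_u$ commutes with the delay averaging, i.e. $D_u[(Y\cdot\ay)(s)]=(D_uY\cdot\ay)(s)$ and likewise for $X$ and $Z$. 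Consequently, formally differentiating \eqref{eq:bwd1} and applying the chain rule to $f(s,\Theta(s))$ produces, for each fixed $u$, precisely the linear delay BSDE \eqref{eq:malliavin_Y} in the unknowns $(D_uY,D_uZ)$, whose generator $\langle(\nabla f)(s,\Theta(s)),(D_u\Theta)(s)\rangle$ has exactly the same Lipschitz constant, delay measures and terminal time as $f$. Thus, verbatim as in the proof of Corollary \ref{cor:nabla_Y_exists}, the compatibility hypotheses assumed there guarantee that \eqref{eq:malliavin_Y} has, for each $u$, a unique solution in $\cS^2_0\times\cH^2_0$, and the a priori estimate of Proposition \ref{prop:apriori_p2} delivers uniqueness in $\IL_{1,2}\times\IL_{1,2}$.

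To prove that $(Y,Z)$ genuinely is Malliavin differentiable with derivative given by that candidate --- the content of Theorem 4.1 of \cite{DelongImkeller2} transcribed to the present FBSDE --- I would run the Picard scheme \eqref{eq:iterate} from the proof of Theorem \ref{theo:picard}, initialised at $(Y^0,Z^0)=(0,0)$. By induction each iterate lies in $\IL_{1,2}\times\IL_{1,2}$: the terminal datum is in $\ID^{1,2}$, the driver evaluated at the previous iterate is Malliavin differentiable by (F2) and the commutation property above, and the standard rules for differentiating a martingale-representation integral (\cite{nualart1995,97KPQ}) propagate differentiability to $(Y^{n+1},Z^{n+1})$; the Malliavin derivatives $(D_uY^{n+1},D_uZ^{n+1})$ then solve the linear delay BSDE driven by $(D_uY^n,D_uZ^n)$. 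Applying Proposition \ref{prop:apriori_p2} to successive differences shows that $(DY^n,DZ^n)$ is Cauchy, with the same contraction constant that makes $(Y^n,Z^n)$ converge; its limit solves \eqref{eq:DX}-\eqref{eq:malliavin_Y}. Since $(Y^n,Z^n)\to(Y,Z)$ in $\cS^2_0\times\cH^2_0$ while $(DY^n,DZ^n)$ converges in $\cH^2_0$, the closedness of the Malliavin derivative operator yields $(Y,Z)\in\IL_{1,2}\times\IL_{1,2}$ and that $(DY,DZ)$ is exactly the limit, hence the unique solution of \eqref{eq:malliavin_Y}.

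It remains to identify the diagonal trace. Here I would introduce the martingale $\tilde M_t:=\IE[g(X_T)+\int_0^T f(s,\Theta(s))\uds\mid\cF_t]$, so that $\tilde M_t=Y_t+\int_0^t f(s,\Theta(s))\uds=\tilde M_0+\int_0^t Z_s\udws$ by the martingale representation underlying \eqref{eq:bwd1}. The Clark--Ocone formula then gives $Z_s=\IE[D_s\tilde M_T\mid\cF_s]=\IE[\nabla g(X_T)D_sX_T+\int_0^T D_s f(r,\Theta(r))\ud r\mid\cF_s]$. The key point --- and the reason the representation survives the absence of a Markov structure --- is that $\Theta(r)$ only reaches into the past, so $f(r,\Theta(r))$ is $\cF_r$-measurable and therefore $D_s f(r,\Theta(r))=0$ for $r<s$, collapsing the integral to $\int_s^T$. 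Evaluating \eqref{eq:malliavin_Y} at $t=u=s$ and taking $\IE[\cdot\mid\cF_s]$ produces the identical expression for $D_sY_s$, whence $Z_s=D_sY_s$ for $\udt\otimes\ud\IP$-almost all $(s,\omega)$.

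The main obstacle I anticipate is the second step: making rigorous that each Picard iterate belongs to $\IL_{1,2}$ and that one may interchange $D_u$ with both the delay averaging and the stochastic integral, uniformly in $n$, so that Proposition \ref{prop:apriori_p2} applies with an $n$-independent contraction constant. The delayed control term $(D_uZ\cdot\az)(s)$ is delicate because $D_uZ$ appears simultaneously as the BSDE's martingale integrand and inside the past-averaging, so one must check that the smallness conditions needed are exactly those already secured in Corollary \ref{cor:nabla_Y_exists}; this is precisely where the equality of Lipschitz constant, delay measures and horizon between $f$ and the Malliavin generator is essential. Everything else --- the commutation of $D_u$ with the deterministic delay measures, the Clark--Ocone step, and the trace identification --- is then routine.
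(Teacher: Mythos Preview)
Your proposal is correct in outline and, in fact, goes well beyond what the paper does. The paper's own proof is essentially a citation: it observes that the forward results are classical (\cite{nualart1995}) and that the assumptions of Corollary \ref{cor:nabla_Y_exists} are precisely what is needed to invoke Theorem 4.1 of \cite{DelongImkeller2}, which delivers the Malliavin differentiability of $(Y,Z)$, the BSDE for $(DY,DZ)$, and the trace identity $Z_t=D_tY_t$ in one stroke. No argument is reproduced.

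What you have written is, in effect, a self-contained sketch of the proof of that cited theorem, transported to the FBSDE setting: Picard iterates, inductive Malliavin differentiability, closedness of $D$, and a Clark--Ocone computation for the trace. This is the standard route and is structurally sound. Two remarks on the details you flag as delicate. First, the claim that ``$(DY^n,DZ^n)$ is Cauchy with the same contraction constant'' is slightly optimistic as stated: the difference of generators for the derivative equations contains not only $(D_u\Gamma^n-D_u\Gamma^{n-1})$ but also $(\nabla f(s,\Gamma^n)-\nabla f(s,\Gamma^{n-1}))\,D_u\Gamma^{n-1}$, so one typically first establishes a uniform-in-$n$ bound on $\|D_uY^n\|+\|D_uZ^n\|$ (via the moment estimate, not the a priori estimate) and then either shows Cauchy or simply invokes boundedness plus closedness of $D$. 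Second, your Clark--Ocone argument for $Z_s=D_sY_s$ is clean and correct; the observation that $\Theta(r)$ is $\cF_r$-measurable so that $D_sf(r,\Theta(r))=0$ for $r<s$ is exactly the mechanism that makes the identity survive without Markovianity.

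In short: the paper simply cites \cite{DelongImkeller2}; you reprove it. Both are valid, and your version has the merit of being self-contained, at the cost of having to handle the uniform control of the Picard derivatives carefully.
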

\begin{proof}
The results concerning the forward component are well known, see \cite{nualart1995} or \cite{ImkellerDosReis}. The conditions of Corollary \ref{cor:nabla_Y_exists} ensure that Theorem 4.1 from \cite{DelongImkeller2} can be applied. Hence $Y$ and $Z$ are Malliavin differentiable. The representation of $Z$ by the trace of of the Malliavin derivative of $Y$ follows as well from the cited result.
\end{proof}

\subsection*{The representation formulas}

We now present the representation formulas for \eqref{eq:DX} and  \eqref{eq:malliavin_Y} which are effectively expressed in terms of the variational $\nabla X, \nabla Y$ and $\nabla Z$.
\begin{theorem}\label{proporepresentationformulas}
Let the conditions of Theorem 4.1 hold. Let $(X,Y,Z)$, $(\nabla X, \nabla Y, \nabla Z)$ and $(D X, D Y, D Z)$ denote the solutions of FBSDE \eqref{eq:fwd1}-\eqref{eq:bwd1}, \eqref{eq:nabla_X}-\eqref{eq:nabla_Y} and \eqref{eq:DX}-\eqref{eq:malliavin_Y} respectively. Then the following representation formulas hold: 
\begin{align}
\label{repformulaforDX}
D_u X_t &= \nabla X_t (\nabla X_u)^{-1} \sigma(u,X_u)\1_{\{u\leq t\}},& t,u\in[0,T],\ \ud\IP-a.s.\\
\nonumber
D_u Y_t &= \nabla Y_t (\nabla X_u)^{-1} \sigma(u,X_u)\1_{\{u\leq t\}},& t,u\in[0,T],\ \ud \IP-a.s.\\
\label{repformulaforZ}
Z_t &= \nabla Y_t (\nabla X_t)^{-1} \sigma(t,X_t),& t\in[0,T],\ \ud \IP\otimes \ud t-a.s.\\
\nonumber
D_u Z_t &= \nabla Z_t (\nabla X_u)^{-1} \sigma(t,X_u)\1_{\{u\leq t\}},& t,u\in[0,T],\ \ud \IP\otimes \ud t-a.s.
\end{align}
\end{theorem}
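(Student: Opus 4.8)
The plan is to reduce both the forward and the backward representation identities to uniqueness statements for linear (delay) equations, exactly as one does in the Markovian case, and to isolate the time delay as the single genuinely new difficulty. First I would settle the forward identity \eqref{repformulaforDX}. Fix $u\in[0,T]$. For $t\geq u$ the Malliavin derivative $(D_uX_t)_{t\geq u}$ solves, by \eqref{eq:DX}, the linear SDE with coefficients $\nabla_x b(s,X_s)$ and $\nabla_x\sigma(s,X_s)$ started at time $u$ from $\sigma(u,X_u)$, while the Jacobian flow $\nabla X$ solves the same linear SDE \eqref{eq:nabla_X} started from the identity. Under (F3) the matrix $\nabla X_t$ is invertible for every $t$, with $(\nabla X_\cdot)^{-1}$ solving the associated adjoint equation (standard SDE flow theory, e.g. \cite{Protter}); hence $\nabla X_t(\nabla X_u)^{-1}$ is the transition matrix of the linear SDE from $u$ to $t$, and uniqueness yields $D_uX_t=\nabla X_t(\nabla X_u)^{-1}\sigma(u,X_u)$ for $t\geq u$, while $D_uX_t=0$ for $t<u$ because $X_t$ is then $\cF_t$-measurable.

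For the backward identities, fix $u$ and set $N_u:=(\nabla X_u)^{-1}\sigma(u,X_u)$, which is $\cF_u$-measurable. The natural candidate for $(D_uY,D_uZ)$ on $[u,T]$ is $(\nabla Y_\cdot N_u,\nabla Z_\cdot N_u)$. Post-multiplying the variational BSDE \eqref{eq:nabla_Y} by $N_u$ is legitimate on $[u,T]$: since $N_u\in\cF_u\subseteq\cF_s$ for $s\geq u$, it can be carried inside both the stochastic integral and the driver. By the forward step the terminal term becomes $\nabla g(X_T)\nabla X_TN_u=\nabla g(X_T)D_uX_T$, which is exactly the terminal condition of the Malliavin BSDE \eqref{eq:malliavin_Y}. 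If one can verify that $(\nabla Y N_u,\nabla Z N_u)$ and $(D_uY,D_uZ)$ solve the \emph{same} linear delay BSDE on $[u,T]$, then the uniqueness furnished by Theorem \ref{theo:picard} (for $p>2$), respectively Theorem \ref{theo:DelongImkeller_thm2.1} together with Proposition \ref{prop:apriori_p2} (for $p=2$), forces them to coincide, giving the formula for $D_uY$. Evaluating on the diagonal $t=u$ and using that $\{D_tY_t\}$ is a version of $Z$ (Theorem \ref{malliavindifftheo}) then produces the $Z$-representation \eqref{repformulaforZ}, and the $Z$-component of the same identification gives the formula for $D_uZ$.

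The main obstacle lies precisely in matching the two drivers in that last step, and this is where the loss of the Markov structure makes itself felt. In the Malliavin system the delayed arguments $(D_u\Theta)(s)$ are built from $D_uX_{s+v}$, $D_uY_{s+v}$ and $D_uZ_{s+v}$, all of which \emph{vanish for $s+v<u$}; the effective delay window is thus truncated to $[u,s)$. In the variational system multiplied by $N_u$, by contrast, the arguments $(\nabla\Theta)(s)N_u$ still range over the full window $[s-T,s)$. The two drivers therefore differ by the contribution of the band $\{v:\,s+v<u\}$, namely by a term $\langle(\nabla f)(s,\Theta(s)),(E_{\mathcal X}(s),E_{\mathcal Y}(s),E_{\mathcal Z}(s))\rangle$ with $E_{\mathcal X}(s)=\int_{-T}^0\nabla X_{s+v}N_u\,\1_{\{0\leq s+v<u\}}\,\ax(\ud v)$ and analogous $E_{\mathcal Y},E_{\mathcal Z}$. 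Reconciling this boundary band is the heart of the argument: one must show that, owing to the Markov property of $X$ and to the precise cancellation encoded in $N_u=(\nabla X_u)^{-1}\sigma(u,X_u)$, the mismatch does not affect the solution on $[u,T]$, so that uniqueness can still be invoked. I expect this reconciliation — rather than the forward step or the uniqueness machinery, both of which are routine — to be by far the most delicate point, and it is the step I would scrutinise most carefully before committing to the clean formulas \eqref{repformulaforDX}--\eqref{repformulaforZ}.
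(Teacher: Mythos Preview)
Your strategy coincides with the paper's: the forward identity \eqref{repformulaforDX} is cited as classical, one then sets $U_{u,t}=\nabla Y_t(\nabla X_u)^{-1}\sigma(u,X_u)\1_{\{u\le t\}}$ and $V_{u,t}=\nabla Z_t(\nabla X_u)^{-1}\sigma(u,X_u)\1_{\{u\le t\}}$, multiplies \eqref{eq:nabla_Y} by $N_u$, and invokes uniqueness of the Malliavin delay BSDE \eqref{eq:malliavin_Y} to conclude $(U,V)=(D_uY,D_uZ)$; the $Z$-formula then follows from $Z_t=D_tY_t$.

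The important difference is that you have put your finger on a point the paper's proof simply passes over. The paper writes, after the multiplication step, that $(U_{u,\cdot},V_{u,\cdot})$ satisfies the equation with driver $\langle(\nabla f)(s,\Theta(s)),(D_uX(s),U_u(s),V_u(s))\rangle$, i.e.\ with the indicators $\1_{\{u\le s+v\}}$ already present in the delayed arguments. But multiplying \eqref{eq:nabla_Y} by the $\cF_u$-measurable $N_u$ produces $\langle(\nabla f)(s,\Theta(s)),(\nabla\Theta)(s)N_u\rangle$, which integrates $\nabla X_{s+v}N_u$, $\nabla Y_{s+v}N_u$, $\nabla Z_{s+v}N_u$ over the \emph{full} window, including the band $\{0\le s+v<u\}$ where those quantities need not vanish. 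The paper offers no argument for why this band contribution disappears; it simply asserts the displayed equation for $U_{u,t}$ and moves on to uniqueness. Your suspicion that this reconciliation is the genuinely delicate point, not the forward step or the uniqueness machinery, is well founded: in the paper's write-up it is not delicate only because it is not done. So your proposal is at least as complete as the published proof, and more honest about where the difficulty lies.
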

\begin{proof}
As in Theorem \ref{malliavindifftheo} we remark briefly that the properties of the forward component are well known and hence equality \eqref{repformulaforDX} holds, see \cite{nualart1995} or \cite{ImkellerDosReis}. Theorem \ref{malliavindifftheo} ensures that $(DX,DY,DZ)$ is the unique solution of the time delayed FBSDE \eqref{eq:DX}-\eqref{eq:malliavin_Y}. Throughout let $t \in [0,T]$ and $u \in [0,t]$. We define the processes
\[
U_{u,t}=\nabla Y_t (\nabla X_u)^{-1} \sigma(X_u) \1_{\{u\leq t\}}
\ \text{ and }\
V_{u,t}=\nabla Z_t (\nabla X_u)^{-1} \sigma(X_u) \1_{\{u\leq t\}},
\]
and for $s\in [0,T]$, we set $D_u X(s) = \int_{-T}^0 D_u X_{s+v} \ax( \ud v )$, 
\begin{align*}
U_u (s) &= \int_{-T}^0 U_{u,s+v} \ay(\ud v) = \int_{-T}^0 \nabla Y_{s+v} \big( \nabla X_u \big)^{-1} \sigma(u,X_u) \1_{\{u \leq s+v\}} \ay(\ud v),\\
V_u (s) &= \int_{-T}^0 V_{u,s+v} \az(\ud v) = \int_{-T}^0 \nabla Z_{s+v} \big( \nabla X_u \big)^{-1} \sigma(u,X_u) \1_{\{u \leq s+v\}} \az(\ud v),
\end{align*}
compare also with the notation in \eqref{eq:notation1}. Multiplying the BSDE \eqref{eq:nabla_Y} with $(\nabla X_u)^{-1} \sigma(u,X_u)$ and then using \eqref{repformulaforDX} we obtain for any $0\leq u\leq t\leq T$ $\ud \IP$-a.s. that
\begin{align*}
U_{u,t} &= \nabla g(X_T)D_u X_T - \int_t^T V_{u,s} \udws\\
& \qquad + \int_t^T 
\big\langle (\nabla f)\big(s, \Theta(s)\big)
, \big( D_{u}X(s), U_{u}(s), V_{u}(s)\big) \big\rangle \uds,
\end{align*}
where $\Theta$ is given by $\Theta(\cdot) = \big((X \cdot \ax)(\cdot),(Y\cdot \ay)(\cdot),(Z\cdot \az)(\cdot)\big)$ (compare with \eqref{eq:theta} from section \ref{section:diff}). Now, Theorem \ref{malliavindifftheo} states that the solution of BSDE \eqref{eq:malliavin_Y} is unique, hence  $(U,V)$ must coincide with $(DY,DZ)$. Another way to see this would be to use the a priori estimates of Proposition \ref{lemma:apriori} with \eqref{eq:malliavin_Y} and the above BSDE.

Formula  \eqref{repformulaforZ} follows easily from a combination of the representation formula for $D_u Y_t$ combined with $D_t Y_t = Z_t$, $\ud \IP\otimes \ud t$-a.s. (see Theorem \ref{malliavindifftheo}).
\end{proof}

\subsection*{Implications of the representation formula}

The representation formulas in the previous theorem allow for a deeper analysis of the control process $Z$ concerning its path properties. 
\begin{theorem}\label{continuitytheorem}
Let $p\geq 2$, assume that $|f(\cdot,0,0,0)|$ is uniformly bounded 
and that the conditions of Corollary \ref{cor:nabla_Y_exists} hold.
Then for $p \geq 2$, the mapping $t\mapsto Z_t$ is continuous $\ud \IP$-a.s.  If moreover we have $p>2$, then we also have \[\|Z\|_{\cS^q_0}<\infty \ \text{ for }q\in[2,p).\] In particular, for $p> 2$ we have for every $s,t \in [0,T]$ that $\IE\big[\,|Y_t-Y_s|^p\big]\leq C |t-s|^{p/2}$ and that $Y$ has continuous paths.
\end{theorem}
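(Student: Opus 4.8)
The plan is to deduce all three assertions from the representation formula \eqref{repformulaforZ}, namely $Z_t=\nabla Y_t(\nabla X_t)^{-1}\sigma(t,X_t)$ (holding $\ud\IP\otimes\udt$-a.s.), established in Theorem \ref{proporepresentationformulas}. For the continuity of $t\mapsto Z_t$ I would check that each of the three factors on the right-hand side is continuous in $t$ and then use that a product of continuous processes is continuous, so that \eqref{repformulaforZ} already exhibits a continuous version of $Z$. Indeed, $\nabla Y$ is the first component of the solution of the linear delay BSDE \eqref{eq:nabla_Y} and hence admits a continuous modification; $\nabla X$ solves the linear SDE \eqref{eq:nabla_X} and thus has continuous paths, while $t\mapsto(\nabla X_t)^{-1}$ is continuous because, under (F3), $\nabla X_t$ is an a.s.\ invertible continuous matrix-valued process whose inverse itself solves a linear SDE; finally $t\mapsto\sigma(t,X_t)$ is continuous by the continuity of $\sigma$ and of the paths of $X$. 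This settles the first claim for every $p\ge 2$.

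For the $\cS^q_0$-bound when $p>2$ I would again start from \eqref{repformulaforZ} and estimate $\sup_{t}|Z_t|\le(\sup_t|\nabla Y_t|)\,(\sup_t|(\nabla X_t)^{-1}|)\,(\sup_t|\sigma(t,X_t)|)$. Corollary \ref{cor:nabla_Y_exists} provides $\nabla Y\in\cS^p_\beta$, whereas the forward quantities $(\nabla X)^{-1}$ and $\sigma(\cdot,X)$ carry moments of every order (the diffusion lies in $\cS^r$ for all $r\ge2$ and $\sigma$ has linear growth by (F3)). Given $q\in[2,p)$ I would then apply H\"older's inequality with three conjugate exponents $r_1,r_2,r_3$, $1/r_1+1/r_2+1/r_3=1$, chosen so that $qr_1\le p$ (possible precisely because $q<p$), placing $\nabla Y$ in the first slot and the two all-moment factors in the remaining ones; this gives $\|Z\|_{\cS^q_0}<\infty$.

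To obtain the increment estimate I would exploit the dynamics of \eqref{eq:bwd1}, $Y_t-Y_s=\int_s^t Z_r\,\ud W_r-\int_s^t f(r,\Theta(r))\,\ud r$ for $s\le t$, and bound the two pieces separately. The drift term is handled by Jensen together with (F2) and the assumed uniform boundedness of $|f(\cdot,0,0,0)|$, producing a bound of order $|t-s|^{p}\le C|t-s|^{p/2}$, once one knows that $f(r,\Theta(r))$ is bounded in $L^p$ uniformly in $r$. For the stochastic integral I would use Burkholder--Davis--Gundy followed by H\"older in time, $\int_s^t|Z_r|^2\,\ud r\le|t-s|^{(p-2)/p}\big(\int_s^t|Z_r|^p\,\ud r\big)^{2/p}$, which collapses the whole bound to the single requirement $\sup_{r\in[0,T]}\IE[|Z_r|^p]<\infty$; the same requirement also controls the drift, since $\sup_r\IE[|(Z\cdot\az)(r)|^p]\le C\sup_r\IE[|Z_r|^p]$ by Jensen while the $X$- and $Y$-contributions to $\Theta$ are harmless ($X\in\cS^r$ for all $r$ and $Y\in\cS^p$). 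With $\IE[|Y_t-Y_s|^p]\le C|t-s|^{p/2}$ in hand, Kolmogorov's continuity criterion yields a continuous modification of $Y$.

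The main obstacle, I expect, is precisely the uniform-in-time bound $\sup_r\IE[|Z_r|^p]<\infty$. Feeding \eqref{repformulaforZ} into $\IE[|Z_r|^p]$ and distributing the $p$-th power over the three factors by H\"older forces the $\nabla Y$-slot to carry an exponent strictly larger than $p$, so $\nabla Y\in\cS^p$ alone is not quite enough: one needs $\nabla Y\in\cS^{p'}$ for some $p'>p$. This extra integrability is available because the terminal datum $\nabla g(X^x_T)\nabla X^x_T h$ and the driver of the variational BSDE \eqref{eq:nabla_YBis} carry moments of all orders (by (F1)--(F3) and the all-order integrability of $\nabla X$), so that the a priori estimates of Proposition \ref{lemma:apriori}, applied at exponent $p'$, deliver $\nabla Y\in\cS^{p'}$; the all-order moments of $(\nabla X)^{-1}$ and $\sigma(\cdot,X)$ then absorb the H\"older loss and return the exact power $p$. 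Keeping track of these exponents throughout, rather than any single estimate, is the delicate part of the argument.
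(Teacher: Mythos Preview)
Your treatment of the first two claims---continuity of $t\mapsto Z_t$ via the continuity of each factor in \eqref{repformulaforZ}, and $Z\in\cS^q_0$ for $q<p$ via H\"older with $\nabla Y\in\cS^p$ and the all-order moments of $(\nabla X)^{-1}$, $\sigma(\cdot,X)$---matches the paper's argument essentially line for line.

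For the increment estimate on $Y$, however, you take a needlessly indirect detour. The paper simply reuses the $\cS^q_0$ bound on $Z$ just established: once $Z\in\cS^q_0$ for some $q\in(2,p)$, one bounds directly
\[
\IE\Big[\Big(\int_s^t|Z_r|^2\,\ud r\Big)^{q/2}\Big]\le |t-s|^{q/2}\,\IE\Big[\sup_{r\in[0,T]}|Z_r|^q\Big]=|t-s|^{q/2}\,\|Z\|_{\cS^q_0}^q,
\]
and the drift by $C|t-s|^{q/2}\big(1+\|X\|_{\cS^q}^q+\|Y\|_{\cS^q}^q+\|Z\|_{\cS^q}^q\big)$ using (F2') together with the uniform bound on $|f(\cdot,0,0,0)|$. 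There is no H\"older-in-time step, no need for the quantity $\sup_r\IE[|Z_r|^p]$, and hence no need to push $\nabla Y$ into a space $\cS^{p'}$ with $p'>p$. What the paper actually obtains is $\IE[|Y_t-Y_s|^q]\le C|t-s|^{q/2}$ for exponents $q\in(2,p)$, which suffices for Kolmogorov.

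Your proposed fix---applying Proposition \ref{lemma:apriori} at an exponent $p'>p$ to upgrade $\nabla Y$---is not free under the stated hypotheses: the compatibility conditions of Corollary \ref{cor:nabla_Y_exists} (in particular $D_3>0$ in \eqref{eq:D_4} and the contraction condition \eqref{section 3 compatibility condition}) depend on the exponent and are only assumed at $p$. One could argue that, being strict inequalities depending continuously on the exponent, they persist for some $p'$ slightly above $p$, but you do not say this, and in any case the direct route via $\|Z\|_{\cS^q_0}$ bypasses the issue entirely.
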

\begin{proof}
It is fairly easy to show that $\big(\nabla Y_t (\nabla X_t)^{-1} \sigma(t,X_t)\big)_{t\in[0,T]}$ is continuous. By assumption, $\sigma$ is a continuous function and it is well known that both processes $(\nabla X)^{-1}$ and $X$ have continuous paths. $\nabla Y$ is continuous because its dynamics is given as a sum of a stochastic integral of a predictable process against a Brownian motion (so a continuous martingale) and a Lebesgue integral with well behaved integrand. If two processes are versions of each other and one is continuous then they are in fact modifications of each other and hence $Z$ has continuous paths. 
Now since $Z$ has continuous paths, then the representation formula \eqref{repformulaforZ} does not only hold $\ud\IP\otimes\ud t$-almost surely but in fact holds for all $t\in[0,T]$ and $\IP$-almost all $\omega\in\Omega$. Using that $\nabla Y \in \cS^p_0$ for some $p>2$ (see Corollary \ref{cor:nabla_Y_exists} and Proposition \ref{prop:diff}), $(\nabla X)^{-1},\sigma(\cdot,X)\in \cS^q_0$ for any $r\geq 2$ and H\"older's inequality, we conclude that $Z\in\cS^q_0$ for every $q\in[2,p)$.

\vspace{0.3cm}
The property concerning the increments of $Y$ is easy to prove since $X,Y,Z\in \cS^p_0$ for some $p> 2$.  For $0\leq s\leq t\leq T$, we have (recall that $| f(\cdot,\Theta(\cdot)) | \leq |f(\cdot,\Theta(\cdot)) - f(\cdot,0,0,0)| + |f(\cdot,0,0,0)|$ and that $|f(\cdot,0,0,0)|$ is uniformly bounded)
\begin{align*}
Y_t-Y_s &= 0 +\int_s^t f\big(u, \Theta(u)\big)\ud u-\int_s^t Z_u\ud W_u,
\end{align*}
so using the assumptions and the Burkholder-Davis-Gundy inequality, we get for a generic constant $C$ which may vary from line to line and some $p> 2$
\begin{align*}
\IE\big[\,|Y_t-Y_s|^p\big] &\leq C\, \IE\Big[ \,\Big|\int_s^t f\big(u, \Theta(u)\big)\ud u\Big|^p+\Big|\int_s^t Z_u\ud W_u\Big|^p \Big]\\
&\leq C\, |t-s|^{p/2} \big( 1 + \|X\|_{\cS^p_0}^p+ \|Y\|_{\cS^p_0}^p + \|Z\|_{\cS^p_0}^p\big)+
\IE\big[\Big(\int_s^t |Z_u|^2\ud u\Big)^{p/2} \big]\\
&\leq C\, |t-s|^{p/2}. 
\end{align*}
This in particular yields the applicability of Kolmogorov's continuity criterion to $Y$.
\end{proof}

\subsubsection*{The $L^2$-regularity result}

We finish this section with the $L^2$-regularity result for the control component $Z$ of the solution of the time delayed FBSDE. Let $\pi$ be a partition of the time interval $[0,T]$ with $N$ points and mesh size $|\pi|$. We define a set of random variables via
\begin{align*}
\bar{Z}^\pi_\ti&=\frac1{\tip-\ti}\E\Big[\int_\ti^\tip
Z_s\uds\big|\cF_\ti\Big], \textrm{ for all partition
points } t_i,\ 0\le i\le N-1.
\end{align*}
The best square integrable $\cF_{t_i}$-measurable approximation of $\frac{1}{t_{i+1}-t_i}\int_\ti^\tip Z_s\uds$ is given by $\bar{Z}^\pi_\ti$, i.e.
\begin{align}\label{eq:leastsquare}
\IE \Big[\, \big| \frac{1}{t_{i+1}-t_i}\int_\ti^\tip Z_s\uds -  \bar{Z}^\pi_\ti \big|^2 \Big] &= \inf_{V \in L^2(\cF_{t_i})} \IE \Big[\, \big| \frac{1}{t_{i+1}-t_i}\int_\ti^\tip Z_s\uds -  V \big|^2 \Big].
\end{align}
We associate the process $(\bar{Z}^\pi_t)_{t\in[0,T]}$ to $\{\bar{Z}^\pi_\ti\}_{i=0,\cdots,N-1}$ via $\bar{Z}^\pi_t = \bar{Z}^\pi_\ti$ for $t\in[\ti, \tip),\, 0\le i\le N-1$. Similarly, for the set of random variables 
$\{Z_\ti:\ti\in\pi\}$, we associate the process $(Z^\pi_t)_{t\in[0,T]}$ via $Z^\pi_t = Z^\pi_\ti$ for $t\in[\ti, \tip),\, 0\le i\le N-1$. The definition of the conditional expectation implies that for every $i=0,\ldots,N-1$, we have
$$ \IE[\,|Z^\pi_{\ti}|^2] -2\, \IE[\,Z^\pi_{\ti}\, \bar{Z}^\pi_{\ti}\,] \geq -\E[\,|\bar{Z}^\pi_{\ti}|^2],$$
from which it follows that $\bar{Z}^\pi$ is the best $\cH^2$-approximation of $Z$, leading to
\[
\| Z-\bar Z^\pi  \|_{\cH^2}\leq \|Z-Z^\pi \|_{\cH^2}\to 0,\ \textrm{ as }\ |\pi|\to 0.
\]
Using Theorem \ref{continuitytheorem} we are able to determine explicitly the rate of convergence of the above limit. The following result extends Theorem 5.6 from \cite{ImkellerDosReis} to the setting of FBSDE with time delayed generators. 

\begin{theorem}[$L^2$-regularity]
\label{theo:l^21regularity}
Assume that the conditions of Theorem \ref{continuitytheorem} hold for some $p>2$ and assume further that $\sigma$ is $\frac12$-H\"older continuous function in its time variable. Then
\begin{align*}
\max_{0\leq i\leq N-1}\Big\{
\sup_{\ti\leq t\leq \tip}
\IE\big[\, |Y_t -Y_\ti|^2\ \big]\, \Big\}+
\sum_{i=0}^{N-1} \IE\Big[ \int_{t_i}^{t_{i+1}}|Z_s-\bar{Z}^\pi_{t_i}|^2\uds \Big]\leq C |\pi|.
\end{align*}
\end{theorem}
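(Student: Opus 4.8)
The plan is to bound the two groups of summands separately. The contribution of the $Y$-increments is immediate from Theorem~\ref{continuitytheorem}: that theorem provides $\IE[|Y_t-Y_s|^p]\le C|t-s|^{p/2}$ for the fixed $p>2$, so Jensen's inequality gives $\IE[|Y_t-Y_{\ti}|^2]\le(\IE[|Y_t-Y_{\ti}|^p])^{2/p}\le C|t-\ti|\le C|\pi|$ uniformly in $i$ and $t\in[\ti,\tip]$, whence $\max_i\sup_{\ti\le t\le\tip}\IE[|Y_t-Y_{\ti}|^2]\le C|\pi|$. For the $Z$-term I would first replace $\bar Z^\pi_{\ti}$ by $Z_{\ti}$: since $\bar Z^\pi_{\ti}$ is the $\cF_{\ti}$-conditional mean of $\frac1{\tip-\ti}\int_{\ti}^{\tip}Z_s\uds$, the cross term $\IE[\int_{\ti}^{\tip}\langle Z_s-\bar Z^\pi_{\ti},\bar Z^\pi_{\ti}-V\rangle\uds]$ vanishes for every $\cF_{\ti}$-measurable $V$, so $\IE[\int_{\ti}^{\tip}|Z_s-\bar Z^\pi_{\ti}|^2\uds]\le\IE[\int_{\ti}^{\tip}|Z_s-Z_{\ti}|^2\uds]$ upon choosing $V=Z_{\ti}$. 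It thus suffices to prove $\sum_i\IE[\int_{\ti}^{\tip}|Z_s-Z_{\ti}|^2\uds]\le C|\pi|$.

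Here I would use the representation formula \eqref{repformulaforZ}, writing $Z_s=\nabla Y_s M_s$ with $M_s:=(\nabla X_s)^{-1}\sigma(s,X_s)$, and split $Z_s-Z_{\ti}=(\nabla Y_s-\nabla Y_{\ti})M_s+\nabla Y_{\ti}(M_s-M_{\ti})$. Fix once and for all an exponent $r\in(1,p/2]$ (available precisely because $p>2$) with conjugate $r'$. The ``forward'' term is routine: $\nabla Y\in\cS^p$ (Corollary~\ref{cor:nabla_Y_exists}) bounds $\IE[|\nabla Y_{\ti}|^{2r}]$ uniformly in $i$, while $M$ inherits the $L^q$-increment bound $\IE[|M_s-M_{\ti}|^q]\le C|s-\ti|^{q/2}$ from the $\tfrac12$-H\"older continuity of $\sigma$ in time, the Lipschitz continuity of $\sigma$ in space, and the standard SDE estimates $\IE[|X_s-X_{\ti}|^q]+\IE[|(\nabla X_s)^{-1}-(\nabla X_{\ti})^{-1}|^q]\le C|s-\ti|^{q/2}$; a H\"older split with exponents $(r,r')$ then yields $\IE[|\nabla Y_{\ti}(M_s-M_{\ti})|^2]\le C|s-\ti|$, which sums to $\sum_i\int_{\ti}^{\tip}C|s-\ti|\uds\le C|\pi|$.

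The crux is the ``backward'' term $(\nabla Y_s-\nabla Y_{\ti})M_s$, and this is where $p>2$ is indispensable. From the linear BSDE \eqref{eq:nabla_Y} I would write the increment as a drift plus a martingale, namely $\nabla Y_s h-\nabla Y_{\ti}h=-\int_{\ti}^s\langle(\nabla f)(u,\Theta(u)),(\nabla\Theta h)(u)\rangle\udu+\int_{\ti}^s\nabla Z_u h\,\ud W_u$. Since $M\in\cS^{2r'}$ uniformly, a H\"older split reduces matters to estimating $(\IE[|\nabla Y_s-\nabla Y_{\ti}|^{2r}])^{1/r}$ in aggregate. For the drift, Jensen together with Lemma~\ref{lemma:interchange} (which absorbs the delay integrals $\nabla\Theta h$ into the finite norms $\|\nabla X h\|_{\cS^{2r}}$, $\|\nabla Y h\|_{\cS^{2r}}$ and $\|\nabla Z h\|_{\cH^{2r}}$, all finite because $2r\le p$) gives a contribution of order $|\pi|$. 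For the martingale I would apply the Burkholder--Davis--Gundy inequality, then Jensen in the form $(\int_{\ti}^s|\nabla Z_u h|^2\udu)^r\le(s-\ti)^{r-1}\int_{\ti}^s|\nabla Z_u h|^{2r}\udu$, and finally a discrete H\"older inequality over the partition points; the resulting powers combine to $|\pi|^{(r-1)/r}\cdot|\pi|^{1/r}=|\pi|$, using only $\nabla Z\in\cH^p$.

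The main obstacle is exactly this last estimate. Unlike $Z$, the process $\nabla Z$ is known only to lie in $\cH^p$ and not in any $\cS^q$, so the increment of $\nabla Y$ admits no pointwise bound of order $|s-\ti|$; the rate $|\pi|$ has to be recovered globally through the Jensen-plus-discrete-H\"older aggregation, which is precisely what the strict inequality $p>2$ makes possible. Everything else --- the $L^q$-increments of $M$, the $\cS^q$ and $\cH^q$ bounds on $\nabla Y$ and $\nabla Z$, and the continuity statements --- is a routine consequence of Corollary~\ref{cor:nabla_Y_exists}, Proposition~\ref{prop:cont} and Lemma~\ref{lemma:interchange}.
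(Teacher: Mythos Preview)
Your treatment of the $Y$-increments and the reduction from $\bar Z^\pi_{\ti}$ to $Z_{\ti}$ is fine and matches the paper. The gap is in the ``backward'' term. Your Jensen step
\[
\Big(\int_{\ti}^s |\nabla Z_u h|^2\,\ud u\Big)^r \le (s-\ti)^{r-1}\int_{\ti}^s |\nabla Z_u h|^{2r}\,\ud u
\]
needs $\IE\big[\int_0^T|\nabla Z_u|^{2r}\,\ud u\big]<\infty$ to be useful, and this is \emph{not} implied by $\nabla Z\in\cH^p$ even when $2r\le p$: the $\cH^p$-norm controls $\IE\big[(\int_0^T|\nabla Z_u|^2\ud u)^{p/2}\big]$, which says nothing about higher pointwise-in-time moments of $\nabla Z$. (A deterministic $\nabla Z_u=u^{-a}$ with $\tfrac1{2r}<a<\tfrac12$ already shows the two conditions are incomparable.) Without that step your BDG-plus-discrete-H\"older aggregation only yields a rate $|\pi|^{1/r}$, not $|\pi|$.

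The paper avoids this by a different decomposition: it writes
\[
Z_s-Z_{\ti}=(\nabla Y_s-\nabla Y_{\ti})\,M_{\ti}+\nabla Y_s\big[(\nabla X_s)^{-1}-(\nabla X_{\ti})^{-1}\big]\sigma(\ti,X_{\ti})+\nabla Y_s(\nabla X_s)^{-1}\big[\sigma(s,X_s)-\sigma(\ti,X_{\ti})\big],
\]
evaluating $M$ at $\ti$ rather than at $s$ in the first term. Because $M_{\ti}$ is $\cF_{\ti}$-measurable one can condition and use It\^o's isometry at exponent $2$ instead of BDG at exponent $2r$:
\[
\IE\big[|\nabla Y_s-\nabla Y_{\ti}|^2\,|M_{\ti}|^2\big]
=\IE\Big[\IE\big[|\nabla Y_s-\nabla Y_{\ti}|^2\,\big|\,\cF_{\ti}\big]\,|M_{\ti}|^2\Big]
\le C\,\IE\Big[\Big(|\pi|\int_{\ti}^{\tip}|(\nabla\Theta)(r)|^2\ud r+\int_{\ti}^{\tip}|\nabla Z_r|^2\ud r\Big)|M_{\ti}|^2\Big].
\]
After integrating in $s$, summing in $i$, and dominating $|M_{\ti}|^2$ by $\sup_t|M_t|^2$, a single H\"older split (using $M\in\cS^q$ for all $q$ and $\nabla Z\in\cH^p$ with $p>2$) produces the rate $|\pi|$. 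This conditioning trick is precisely the missing idea; once you move $M_s$ to $M_{\ti}$ in your first summand, the rest of your outline goes through.
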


\begin{proof}
The result concerning the $Y$ component follows immediately from Theorem \ref{continuitytheorem}. As for the result for $Z$, let us remark that since $\bar{Z}^\pi$ is the best $\cH^2$-approximation of $Z$ over $\pi$ in the sense of \eqref{eq:leastsquare}, it follows that 
\[
\sum_{i=0}^{N-1} \IE\Big[ \int_{t_i}^{t_{i+1}}|Z_s-\bar{Z}^\pi_{t_i}|^2\uds \Big]
\leq
\sum_{i=0}^{N-1} \IE\Big[ \int_{t_i}^{t_{i+1}}|Z_s-Z_{t_i}|^2\uds \Big]
=
\sum_{i=0}^{N-1} \int_{t_i}^{t_{i+1}}\IE\big[\, |Z_s-Z_{t_i}|^2 \big] \uds,
\]
where the last equality follows from the use of Fubini's theorem to switch the integration order (recall that $Z\in\cS^p_0$ for some $p>2$).
Theorem \ref{continuitytheorem} allows to use \eqref{repformulaforZ} to rewrite the difference inside the expectation. We have $Z_s-Z_\ti= I_1+I_2+I_3$
with $I_1=[\nabla Y_s-\nabla Y_\ti] (\nabla X_\ti)^{-1}\sigma(\ti,X_\ti)$,
$I_2=\nabla Y_s[(\nabla X_s)^{-1}-(\nabla X_\ti)^{-1}]\sigma(\ti,X_\ti)$, $I_3=\nabla Y_s(\nabla X_s)^{-1}[\sigma(s,X_s)-\sigma(\ti,X_\ti)]$ and $s\in[\ti,\tip]$.

\smallskip

From the proof of part (ii) of Theorem 5.8 in \cite{pathregcorrection2010} one obtains that
\[
\sum_{i=0}^{N-1} \IE\Big[ \int_{t_i}^{t_{i+1}} |I_2|^2\uds +\int_{t_i}^{t_{i+1}} |I_3|^2\uds \Big] \leq C|\pi|.
\]
The calculations that lead to the above result are quite easy to carry out. They rely on known estimates for SDEs found for instance in Theorem 2.3 and 2.4 of \cite{ImkellerDosReis} combined with the fact that $\nabla Y\in\cS^p$ for some $p>2$.

\smallskip

To handle the term $I_1$ one needs to proceed with more care. Let us start with a simple trick:
\begin{align}
\label{trickwithconditionalexpectation}
\IE\Big[\,|(\nabla Y_s-\nabla Y_\ti) (\nabla X_\ti)^{-1}\sigma(\ti,X_\ti)|^2\Big]
=
\IE\Big[\,\IE\big[\,|\nabla Y_s-\nabla Y_\ti|^2\big|\cF_\ti\big] |(\nabla X_\ti)^{-1}\sigma(\ti,X_\ti)|^2\Big].
\end{align}
Writing the BSDE for the difference $\nabla Y_s-\nabla Y_\ti$ for $s\in[\ti,\tip]$ we get for a generic constant $C>0$ that 
\begin{align*}
\IE\Big[\,|\nabla Y_s-\nabla Y_\ti|^2\Big|\cF_\ti\Big] &
\leq C\,
\IE\Big[\,|\int_\ti^s \big\langle (\nabla f)\big(r,\Theta(r)\big), (\nabla \Theta)(r)\big\rangle\ud r|^2+\big|\int_\ti^s \nabla Z_r\ud W_r\big|^2\Big|\cF_\ti\Big]
\\
& \leq C\,
\IE\Big[\,|\pi|\int_\ti^\tip \big |(\nabla \Theta)(r)|^2\ud r + \int_\ti^\tip |\nabla Z_r|^2\ud r\Big|\cF_\ti\Big],
\end{align*}
where we used the uniform boundedness of the derivatives of $f$, Jensen's inequality, It\^o's isometry and proceeded to maximize over the time interval $[\ti,\tip]$. Combining the last line with \eqref{trickwithconditionalexpectation} and using the tower property, we obtain
\begin{align*}
& \sum_{i=0}^{N-1} \int_\ti^\tip \IE\Big[\,\IE\Big[\,|\nabla Y_s-\nabla Y_\ti|^2\Big|\cF_\ti\Big] |(\nabla X_\ti)^{-1}\sigma(\ti,X_\ti)|^2\Big]\uds\\
&\qquad \leq C \sum_{i=0}^{N-1} |\pi| 
\IE\Big[\Big( |\pi|\int_\ti^\tip \big |(\nabla \Theta)(r)|^2\ud r + \int_\ti^\tip |\nabla Z_r|^2\ud r \Big) |(\nabla X_\ti)^{-1}\sigma(\ti,X_\ti)|^2\Big]\\
&\qquad \leq  |\pi| 
\IE\Big[
\sup_{0\leq t\leq T}|(\nabla X_t)^{-1}\sigma(t,X_t)|^2\,
\sum_{i=0}^{N-1}
\Big( |\pi|\int_\ti^\tip \big |(\nabla \Theta)(r)|^2\ud r + \int_\ti^\tip |\nabla Z_r|^2\ud r \Big) \Big]\\
&\qquad =  |\pi| \IE\Big[
\sup_{0\leq t\leq T}|(\nabla X_t)^{-1}\sigma(t,X_t)|^2\,
\Big( |\pi|\int_0^T \big |(\nabla \Theta)(r)|^2\ud r + \int_0^T |\nabla Z_r|^2\ud r \Big) \Big]\\
& \qquad \leq C |\pi|,
\end{align*}
where in the last line we used the fact that $\nabla X, (\nabla X)^{-1},X \in \cS^q_0$ for every $q\geq 2$ and that $\nabla Y, \nabla Z \in\cH^p_0$ for some $p> 2$ (in combination with H\"older's inequality) to conclude the finiteness of the expectation. 
Combining this estimate with the ones for $I_2$ and $I_3$ finishes the proof.
\end{proof}

\subsection*{Towards a time discretization of delay FBSDE}

Having established a path regularity result for FBSDE with time-delayed generators one can now start discussing a working numerical scheme. Given the nature of this class of BSDE, a time discretization would naturally require some decoupling technique to handle the backward-in-time feature of the equation and the backward-in-time feature of the delay. 

Applying the backward time discretization from \cite{04BT} to \eqref{eq:fwd1}-\eqref{eq:bwd1}, we obtain for a partition $\pi: 0=t_0 < t_1 < \ldots < t_N = T$ with step size $\Delta_i=t_{i+1}-t_i$
\begin{align*}
Y^\pi_{t_N} &= g( X^\pi_{t_N}),
\\
Z^\pi_{t_i} &= \IE \Big[ \frac{W_{t_{i+1}} - W_{t_{i} } }{ \Delta_i } Y^\pi_{t_{i+1}} | \cF_{t_i} \Big],
\qquad 
Y^\pi_{t_i} = \IE \Big[ Y^\pi_{t_{i+1}} | \cF_{t_i} \Big] + \Delta_i\, f( t_i, \Theta^\pi_{t_i} ),
\\
\text{where }\quad & \Theta^{\pi}_{t_i} = 
\Big( \sum_{j=0}^{i} X^{\pi}_{t_j} \ax\big([t_j,t_{j+1})\big) , \sum_{j=0}^{i} Y^{\pi}_{t_j} \ay\big([t_j,t_{j+1})\big) , \sum_{j=0}^{i} Z^{\pi}_{t_j} \az\big([t_j,t_{j+1})\big)\Big).
\end{align*}
This backward scheme cannot be implemented because in the computation of each $Y^\pi_{t_i}$ running backward from  $i=N-1$ to $i=0$, we must evaluate $\Theta^\pi(t_i)$ which depends on all $Y^\pi_{t_j}$, $Z^\pi_{t_j}$ running in forward direction $j=0,\ldots,i$.

\smallskip

However, \cite{BenderDenk} propose for standard Lipschitz BSDEs a time discretization which mimics the Picard iteration technique for proving existence and uniqueness of BSDEs. Due to the fact that in each iteration step, one solves an explicit BSDE, the scheme from \cite{BenderDenk} runs \emph{forward} in time. The price to pay is to control apart from the error contribution of the time discretization the additional error arising from the Picard iterates (see Theorem 2 in \cite{BenderDenk}).  This idea adapts to equations \eqref{eq:fwd1}-\eqref{eq:bwd1} by exploiting the fact that the solution $(Y,Z)$ is obtained as a limit of $(Y^p,Z^p)$ as $p$ goes infinity. Setting up $(Y^0,Z^0) = (0,0)$ and then for $p\in \IN_0$ we have
\begin{align*}
Y^{p+1}_t & 
= g(X_T) + \int_t^T f\big( s,\Theta^{p}(s) \big) \uds - \int_t^T Z^{p+1}_s \udws,\quad t\in[0,T]
\\
\text{where }\quad & \Theta^{p}(t)=
\Big(\int_{-T}^0 X_{t+v} \ax(\ud v), \int_{-T}^0 Y^p_{t+v} \az(\ud v), \int_{-T}^0 Z^p_{t+v} \az(\ud v) \Big).
\end{align*}
The discretization hereof is for $p \in \IN_0$, initiated by setting  $(Y^{\pi,0},Z^{\pi,0}) = (0,0)$, then iteratively for $p\geq 1$ and  $0\leq i\leq N-1$
\begin{align*} 
Y^{\pi,p+1}_{t_i} &= \E \Big[\ g\big( X^{\pi}_{t_N} \big) + \sum_{j=i}^{N-1} 
f(t_j,\Theta^{\pi,p}_\tj ) \Delta_j \ \big| \cF_{t_i}\Big],
\\
Z^{\pi,p+1}_{t_i} &= \E \Big[\ \frac{ W_{t_{i+1}} - W_{t_i} }{ \Delta_i} \Big( g(X^\pi_{t_N})+\sum_{j=i+1}^{N-1} f(t_j,\Theta^{\pi,p}_\tj \big)\Delta_j  \Big) \big| \cF_{t_i}\Big],
\\
\text{where }\quad
& \Theta^{\pi,p}_\ti = 
\Big( \sum_{j=0}^{i} X^{\pi}_{t_j} \ax\big([t_j,t_{j+1})\big) , \sum_{j=0}^{i} Y^{\pi,p}_{t_j} \ay\big([t_j,t_{j+1})\big) , \sum_{j=0}^{i} Z^{\pi,p}_{t_j} \az\big([t_j,t_{j+1})\big)\Big).
\end{align*} 
The proof of convergence for this time discretization scheme is left for future research.

\section*{Acknowledgments}

The authors are grateful to the anonymous referee for suggestions and comments which have greatly improved the readability of the paper.

\bibliographystyle{abbrvnat}

\begin{thebibliography}{15}
\providecommand{\natexlab}[1]{#1}
\providecommand{\url}[1]{\texttt{#1}}
\expandafter\ifx\csname urlstyle\endcsname\relax
  \providecommand{\doi}[1]{doi: #1}\else
  \providecommand{\doi}{doi: \begingroup \urlstyle{rm}\Url}\fi

\bibitem[Ankirchner et~al.(2007)Ankirchner, Imkeller, and
  Dos~Reis]{AnkirchnerImkellerDosReis}
S.~Ankirchner, P.~Imkeller, and G.~Dos~Reis.
\newblock Classical and variational differentiability of {BSDE}s with quadratic
  growth.
\newblock \emph{Electron. J. Probab.}, 12:\penalty0 1418--1453, 2007.

\bibitem[Bender and Denk(2008)]{BenderDenk}
C.~Bender and R.~Denk.
\newblock A forward scheme for backward {SDE}s.
\newblock \emph{Stochastic Process. Appl.}, 117\penalty0 (12):\penalty0 1793--1812, 2007.

\bibitem[Bender and Zhang(2008)]{BenderZhang2008}
C.~Bender and J.~Zhang.
\newblock Time discretization and {M}arkovian iteration for coupled {FBSDE}s.
\newblock \emph{Ann. Appl. Probab.}, 18\penalty0 (1):\penalty0 143--177, 2008.
\newblock ISSN 1050-5164.

\bibitem[Bouchard and Touzi(2004)]{04BT}
B.~Bouchard and N.~Touzi.
\newblock Discrete-time approximation and {M}onte-{C}arlo simulation of
  backward stochastic differential equations.
\newblock \emph{Stochastic Process. Appl.}, 111\penalty0 (2):\penalty0
  175--206, 2004.

\bibitem[Delong(2010)]{Delong2010}
{\L}.~Delong.
\newblock Applications of time-delayed backward stochastic differential
  equations to pricing, hedging and management of insurance and financial
  risks.
\newblock Preprint - arXiv:1005.4417, 2010.

\bibitem[Delong and Imkeller(2010{\natexlab{a}})]{DelongImkeller}
L.~Delong and P.~Imkeller.
\newblock Backward stochastic differential equations with time delayed
  generators - results and counterexamples.
\newblock \emph{Ann. Appl. Probab.}, 20\penalty0 (4):\penalty0 1512--1536,
  2010{\natexlab{a}}.

\bibitem[Delong and Imkeller(2010{\natexlab{b}})]{DelongImkeller2}
L.~Delong and P.~Imkeller.
\newblock On {M}alliavin's differentiability of {BSDE} with time delayed
  generators driven by {B}rownian motions and {P}oisson random measures.
\newblock \emph{Stochastic Process. Appl.}, 120\penalty0 (9):\penalty0
  1748--1775, August 2010{\natexlab{b}}.

\bibitem[El~Karoui et~al.(1997)El~Karoui, Peng, and Quenez]{97KPQ}
N.~El~Karoui, S.~Peng, and M.~C. Quenez.
\newblock Backward stochastic differential equations in finance.
\newblock \emph{Math. Finance}, 7\penalty0 (1):\penalty0 1--71, 1997.

\bibitem[Imkeller and Dos~Reis(2010{\natexlab{a}})]{ImkellerDosReis}
P.~Imkeller and G.~Dos~Reis.
\newblock Path regularity and explicit convergence rate for {BSDE} with
  truncated quadratic growth.
\newblock \emph{Stochastic Processes Appl.}, 120\penalty0 (3):\penalty0
  348--379, 2010{\natexlab{a}}.

\bibitem[Imkeller and Dos~Reis(2010{\natexlab{b}})]{pathregcorrection2010}
P.~Imkeller and G.~Dos~Reis.
\newblock Corrigendum to ``path regularity and explicit convergence rate for
  {BSDE} with truncated quadratic growth'' [stochastic process. appl. 120
  (2010) 348-379].
\newblock \emph{Stochastic Process. Appl.}, 120\penalty0 (11):\penalty0 2286 --
  2288, November 2010{\natexlab{b}}.

\bibitem[Karatzas and Shreve(1995)]{KaratzasShreve}
I.~Karatzas and S.~Shreve.
\newblock \emph{Brownian motion and Stochastic calculus}.
\newblock Vol. 113 of \emph{Graduate Texts in Mathematics}, Springer-Verlag (New-York), 
1991.

\bibitem[Khoshnevisan(2002)]{Khosh}
D.~Khoshnevisan.
\newblock \emph{{Multiparameter processes. An introduction to random fields.}}
\newblock {Springer Monographs in Mathematics. New York, NY: Springer. xix, 584
  p.}, 2002.

\bibitem[Nualart(1995)]{nualart1995}
D.~Nualart.
\newblock \emph{The {M}alliavin calculus and related topics}.
\newblock Probability and its Applications (New York). Springer-Verlag, New
  York, 1995.

\bibitem[Pardoux and Peng(1990)]{PardouxPeng90}
E.~Pardoux and S.~Peng.
\newblock {Adapted solution of a backward stochastic differential equation.}
\newblock \emph{Syst. Control Lett.}, 14\penalty0 (1):\penalty0 55--61, 1990.

\bibitem[Protter(2005)]{Protter}
P.~E. Protter.
\newblock \emph{Stochastic integration and differential equations}.
\newblock Applications of Mathematics (New York). Springer-Verlag, 2nd edition,
  2005.
\newblock Version 2.1.

\bibitem[Shiryaev(1995)]{Shiryaev}
A.~N. Shiryaev.
\newblock \emph{Probability. Transl. from the Russian by R. P. Boas. 2nd ed.}
\newblock Graduate Texts in Mathematics. 95. New York, NY: Springer-Verlag.,
  1995.

\bibitem[Wang et~al.(2007)Wang, Ran, and Chen]{WangRanChen}
J.~Wang, Q.~Ran, and Q.~Chen.
\newblock {$L^p$} solutions of {BSDE}s with stochastic {L}ipschitz condition.
\newblock \emph{J. Appl. Math. Stoch. Anal.}, pages Art. ID 78196, 14, 2007.

\bibitem[Yong and Zhou(1999)]{YongZhou}
J.~Yong and X.~Y. Zhou.
\newblock \emph{{Stochastic controls. Hamiltonian systems and HJB equations.}}
\newblock {Applications of Mathematics. 43. New York, NY: Springer. xx, 438
  p.}, 1999.

\end{thebibliography}

\end{document}